\numberwithin{equation}{section}
\def\ov{\overline}
\let\Re=\undefined\DeclareMathOperator*{\Re}{Re}
\let\Im=\undefined\DeclareMathOperator*{\Im}{Im}
\newcommand{\R}{\mathbb{R}}
\newcommand{\C}{\mathbb{C}}
\newtheorem{theorem}{Theorem}[section]
\newtheorem{lemma}[theorem]{Lemma}
\newtheorem{corollary}[theorem]{Corollary}
\newtheorem{proposition}[theorem]{Proposition}
\theoremstyle{definition}
\newtheorem{remark}[theorem]{Remark}
\newcommand{\Extend}[5]{\ext@arrow0099{\arrowfill@#1#2#3}{#4}{#5}}
\begin{document}
\title[Focusing NLS ]{Scattering Below Ground State of Focusing Fractional Nonlinear Sch\"{o}rdinger Equation with Radial Data }

\author[C. Sun]{Chenmin Sun}
\address{Universit\'e C\^{o}te d'Azur, LJAD, France}
\email{csun@unice.fr}
\author[H.Wang]{Hua Wang}
\address{Department of Mathematics, Central China Normal University, Wuhan, Hubei 430079, China}
\email{wanghua math@126.com;}
\author[X.Yao]{Xiaohua Yao}
\address{Hubei Key Laboratory of Mathematical Sciences and School of Mathematics and Statistics, Central China Normal University, Wuhan, 430079, P.R. China}
\email{yaoxiaohua@mail.ccnu.edu.cn}
\author[J. Zheng]{Jiqiang Zheng}
\address{Universit\'e C\^{o}te d'Azur, LJAD, France}
\email{zhengjiqiang@gmail.com}

 \maketitle

\begin{abstract}
	The aim of this paper is to adapt the strategy in \cite{Dodson} [ See, B. Dodson, J. Murphy, a new proof of scattering below the ground state for the 3D radial focusing cubic NLS, arXiv:1611.04195 ] to prove the scattering of radial solutions below sharp threshold for certain focusing fractional NLS. The main ingredient is to apply the fractional virial identity proved in \cite{Lenzmann} [ See, T. Boulenger, D. Himmelsbach, E. Lenzmann, Blow up for fractional NLS,J. Func. Anal, 271(2016), 2569-2603 ] to exclude the concentration of mass near the origin.
\end{abstract}

\begin{center}
 \begin{minipage}{100mm}
   { \small {{\bf Key Words:}  fractional Schr\"odinger equation;  scattering; Morawetz estimate.}
      {}
   }\\
    { \small {\bf AMS Classification:}
      {35P25,  35Q55, 47J35.}
      }
 \end{minipage}
 \end{center}

\section{Introduction}

\noindent

\noindent In this paper we study the initial-value problem for focusing fractional
nonlinear Schr\"odinger equations(FNLS)
\begin{align} \label{equ:equ1.1}
\begin{cases}    i\partial_tu-(-\Delta)^su= -|u|^{p-1} u,\quad
(t,x)\in\R\times\R^d,
\\
u(0,x)=u_0(x),
\end{cases}
\end{align}
where $u:\R_t\times\R_x^d\to \C$ and the index $s\in(0,1)$.

The fractional Sch\"{o}rdinger equation is a fundamental model coming from fractional quantum mechanics, which was derived by Laskin as a result of extending the Feynman path integral, from the Brownian-like to L\'{e}vy-like quantum mechanical maths. See for example \cite{Laskin}.
Suppose $1+\frac{4s}{d}<p<1+\frac{4s}{d-2}$, and then the critical exponent
$$ s_c=\frac{d}{2}-\frac{2s}{p-1}\in (0,s).
$$
The critical exponent comes from the scaling
$$ u(t,x)\mapsto \lambda^{\frac{2s}{p-1}}u(\lambda^{2s}t,\lambda x)
$$
which keeps \eqref{equ:equ1.1} invariant. Moreover,
$$ \|\lambda^{\frac{2s}{p-1}}u(0,\lambda\cdot)\|_{\dot{H}^{s_c}(\mathbb{R}^d)}=\|u(0,\cdot)\|_{\dot{H}^{s_c}(\mathbb{R}^d)}.$$

There are two basic conserved quantities for the flow \eqref{equ:equ1.1}:
$$ \textrm{Mass }: M[u]=\int_{\mathbb{R}^d}|u(t,x)|^2dx,
$$
$$ \textrm{Energy }:
E[u]=\frac{1}{2}\int_{\mathbb{R}^d}|(-\Delta)^{s/2}u(t,x)|^2dx-\frac{1}{p+1}\int_{\mathbb{R}^d}|u(t,x)|^{p+1}dx.
$$

\eqref{equ:equ1.1} has been widely studied in the recent years. For the aspect of local Cauchy theory and small data scattering, see for example \cite{Cho} \cite{Hong}. \eqref{equ:equ1.1} has good dispersion property when $s>\frac{1}{2}$. For $s=1$, \eqref{equ:equ1.1} is just nonlinear Schr\"{o}dinger equation, while for $s=\frac{1}{2}$ is called the nonlinear half-wave equation. See \cite{Raphael} for the travelling waves and blow-up dynamics of $1D$ cubic half-wave equation.

The blow-up or long-time dynamics of the system \eqref{equ:equ1.1} turns out to be a very interesting problem. In \cite{Lenzmann}, the authors have proved the existence of blow-up dynamics for radial solutions, subject to certain threshold (see Theorem \ref{scattering} below). For energy critical FNLS $(s_c=s)$, the authors in \cite{Guo2} have performed Kenig-Merle type analysis (see \cite{KM}) to prove the global well-posedness of radial solutions and scattering below sharp threshold of stationary solutions.

In this paper, we will study the long-time dynamics for radial solutions of \eqref{equ:equ1.1} below the sharp threshold (see \cite{Lenzmann}).

In the energy sub-critical case ($s_c<s$), there exist solitary waves solutions of the form $u_Q(t,x)=Q(x)e^{it}$, where $Q$ is radial $H^s$ function which solves the fractional elliptic equation:
$$ (-\Delta)^sQ+Q-Q^{p}=0.
$$
We remark that the existence of $Q$ can be derived from variational analysis just as the case $s=1$, while the uniqueness issue is much more difficult to prove, see \cite{Lenzmann2}.

Note that $Q$ is an extremum of the sharp fractional Gagliardo-Nirenberg inequality:
$$ \|u\|_{L^{p+1}(\mathbb{R}^d)}^{p+1}\leq C(d,p,s)\|(-\Delta)^{s/2}u\|_{L^2(\mathbb{R}^d)}^\frac{d(p-1)}{2s}\|u\|_{L^2(\mathbb{R}^d)}^{p+1-\frac{d(p-1)}{2s}}.
$$
The main result of this note is the following:

\begin{theorem}\label{scattering}
Assume that $d\geq 3$, $s\in\left(\frac{d}{d+1},1\right)$, $\frac{8s}{3}<p<1+\frac{4s}{3-2s}$ when $d=3$ and $2\leq p<1+\frac{4s}{d-2s}$ when $d\geq 4$. Assume $u_0\in H^s(\mathbb{R}^d)$ is radial and $E[u_0]\geq 0$. Moreover, suppose that
$$
E[u_{0}]^{s_{c}}M[u_{0}]^{s-s_{c}}<E[Q]^{s_{c}}M[Q]^{s-s_{c}},
$$

$$
\|(-\Delta)^{\frac{s}{2}}u_{0}\|_{L^{2}}^{s_{c}}\|u_{0}\|_{L^{2}}^{s-s_{c}}
<\|(-\Delta)^{\frac{s}{2}}Q\|_{L^{2}}^{s_{c}}\|Q\|_{L^{2}}^{s-s_{c}}.
$$
Then the solution $u(t)$ to \eqref{equ:equ1.1} is globally well-posed and scatters in the sense:
$$ \lim_{t\rightarrow \pm\infty}\|u(t)-e^{-it(-\Delta)^s}u_{\pm}\|_{H^s(\mathbb{R}^d)}=0
$$
for some $u_+,u_-\in H^s(\mathbb{R}^d)$.
\end{theorem}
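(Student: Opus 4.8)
\noindent\emph{Plan of proof.} We shall adapt the scheme of Dodson--Murphy \cite{Dodson}, which rests on three ingredients: a variational/energy‑trapping step; a scattering criterion reducing everything to non‑concentration of mass near the origin; and a proof of that non‑concentration by means of the fractional virial identity of \cite{Lenzmann}. Only the last ingredient is genuinely new; in the first two, $-\Delta$ is replaced throughout by $(-\Delta)^s$ and the radial decay estimate for $\dot H^1$ by its $\dot H^s$‑analogue (which already forces $s>\tfrac12$). For the first step, the sharp fractional Gagliardo--Nirenberg inequality together with the Pohozaev identities for $Q$ show that the two sub‑threshold hypotheses, together with $E[u_0]\ge 0$, propagate: a continuity argument in $t$ yields $\delta=\delta(u_0)>0$ with
\[
\|(-\Delta)^{\frac s2}u(t)\|_{L^2}^{s_c}\|u(t)\|_{L^2}^{s-s_c}\ \le\ (1-\delta)\,\|(-\Delta)^{\frac s2}Q\|_{L^2}^{s_c}\|Q\|_{L^2}^{s-s_c}
\]
on the maximal interval, whence global existence, a uniform bound $E_0:=\sup_t\|u(t)\|_{H^s}<\infty$, and the coercivity
\[
\|(-\Delta)^{\frac s2}u(t)\|_{L^2}^2-\tfrac{d(p-1)}{2s(p+1)}\,\|u(t)\|_{L^{p+1}}^{p+1}\ \ge\ \delta'\,\|(-\Delta)^{\frac s2}u(t)\|_{L^2}^2,\qquad\delta'>0,
\]
uniformly in $t$, which will feed the virial estimate.

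\smallskip
\noindent Next we use the energy‑subcritical local theory and stability lemma, together with the Dodson--Murphy scattering criterion adapted to the fractional propagator (here one exploits that the high‑frequency part of $u$ is automatically small in the critical space $\dot H^{s_c}$ because $s_c<s$): it then suffices to exhibit $t_n\to+\infty$ and $R_n\to+\infty$ with $\int_{|x|\le R_n}|u(t_n,x)|^2\,dx\to0$. Indeed, the radial Sobolev inequality
\[
\|u(t)\|_{L^\infty(|x|\ge R)}\ \lesssim\ R^{-\frac{d-1}{2}}\|u(t)\|_{L^2(|x|\ge R/2)}^{1/2}\|(-\Delta)^{\frac s2}u(t)\|_{L^2}^{1/2}
\]
gives $\int_{|x|\ge R}|u(t)|^{p+1}\lesssim_{E_0}R^{-\frac{(d-1)(p-1)}{2}}$, while interpolating $L^{p+1}$ on $\{|x|\le R\}$ between $L^2$ and $L^{2d/(d-2s)}$ (using $\dot H^s\hookrightarrow L^{2d/(d-2s)}$ and $p<1+\tfrac{4s}{d-2s}$) gives $\int_{|x|\le R}|u(t)|^{p+1}\lesssim_{E_0}\big(\int_{|x|\le R}|u(t)|^2\big)^{\gamma}$ with $\gamma=\gamma(d,p,s)>0$; hence the displayed smallness forces $\|u(t_n)\|_{L^{p+1}(\R^d)}\to0$, which is exactly what the criterion consumes.

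\smallskip
\noindent The heart is the virial estimate. Fix a smooth radial weight $\varphi_R$ equal to $|x|^2$ on $\{|x|\le R\}$, constant on $\{|x|\ge 2R\}$, with $\|\nabla\varphi_R\|_{L^\infty}\lesssim R$; following \cite{Dodson} one will in fact let $R=R(t)\to+\infty$ slowly, which is needed to render several error terms time‑integrable. Let $M_R(t)$ be the fractional Morawetz action attached to $\varphi_R$, defined through the Caffarelli--Silvestre/heat‑semigroup representation of $(-\Delta)^s$ exactly as in \cite{Lenzmann}; then $|M_R(t)|\lesssim_{E_0}R(t)$ (here $s>\tfrac12$ enters through the pairing defining $M_R$). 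The localized virial identity of \cite{Lenzmann}, combined with the coercivity of the first step — used to close the sign of the \emph{focusing} nonlinear term by comparison with the global sub‑threshold quantity — yields schematically $\tfrac{d}{dt}M_R(t)\ge\mathcal G_R(t)-\mathcal E_R(t)$, where $\mathcal G_R\ge0$ bounds below a fixed positive multiple of $\int_{|x|\le R}|u(t)|^{p+1}\,dx$ up to $O(R^{-\kappa})$, and $\mathcal E_R$ collects the boundary‑layer terms on $\{R\le|x|\le 2R\}$, the nonlinear tail $\int_{|x|\ge R}|u|^{p+1}\lesssim_{E_0}R^{-(d-1)(p-1)/2}$ (controlled by the radial Sobolev bound above), and the genuinely nonlocal remainders generated by $[(-\Delta)^s,\varphi_R]$ (controlled by the commutator/bilinear bounds of \cite{Lenzmann}). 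Integrating on $[0,T]$, using $|M_R|\lesssim_{E_0}R$ and dividing by $T$, one gets $\tfrac1T\int_0^T\mathcal G_R\,dt\lesssim_{E_0}\tfrac{R(T)}{T}+\tfrac1T\int_0^T\mathcal E_{R(t)}(t)\,dt$; choosing the growth of $R(\cdot)$ appropriately makes the right‑hand side vanish as $T\to\infty$, so $\liminf_{T\to\infty}\tfrac1T\int_0^T\int_{|x|\le R(t)}|u|^{p+1}\,dx\,dt=0$, which produces $t_n\to+\infty$ with $\int_{|x|\le R(t_n)}|u(t_n)|^{p+1}\to0$, hence $\int_{|x|\le R}|u(t_n)|^2\to0$ for every fixed $R$ by H\"older. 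With the previous paragraph this proves the theorem.

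\smallskip
\noindent The hardest part will be the nonlocality in this last step. For $s=1$ the virial identity is an exact pointwise identity and truncation produces only compactly supported, elementary errors; for $s<1$ the operator $(-\Delta)^s$ does not localize, so $[(-\Delta)^s,\varphi_R]$ leaves genuinely nonlocal remainders that must be estimated uniformly in time and with a gain in $R$ strong enough to beat the time‑average against the growth $|M_R(t)|\lesssim R(t)$. Balancing this gain against the radial‑Sobolev tail $R^{-(d-1)(p-1)/2}$ — and, in the transition annulus, against the focusing nonlinear term handled by coercivity — is precisely what pins down the admissible ranges in the statement: $s\in(\tfrac d{d+1},1)$ and $p>\tfrac{8s}3$ for $d=3$, $p\ge2$ for $d\ge4$. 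A secondary, more routine, point is to have the fractional Strichartz estimates and the $H^s$‑stability theory available in this range of $s$, and to carry out the energy‑trapping and coercivity with merely $H^s$‑regular radial data.
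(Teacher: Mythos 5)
Your three–step architecture — variational coercivity, reduction to non-concentration of mass via a scattering criterion, and a fractional virial/Morawetz estimate to prove non-concentration — is exactly the route the paper takes, and the overall plan is sound. A few of your supporting details, however, do not match what is actually done and would not survive a careful writeup. First, you assert that $R=R(t)\to\infty$ slowly "is needed to render several error terms time-integrable," and attribute this to Dodson--Murphy; in fact neither \cite{Dodson} nor the present paper uses a time-dependent radius. The paper runs the virial identity with a \emph{fixed} $R$ on an arbitrary window $[t_1,t_2]$, obtains $\tfrac{1}{|t_2-t_1|}\int_{t_1}^{t_2}\int_{|x|\le R/2}|u|^{p+1}\lesssim R/|t_2-t_1| + R^{-\beta}$, and only then lets $|t_2-t_1|\to\infty$ followed by $R\to\infty$ — this avoids the $\dot R$-contribution to $\tfrac{d}{dt}M_{R(t)}$ that your version would have to handle. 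Second, the virial identity of \cite{Lenzmann} and this paper is derived from Balakrishnan's resolvent formula $(-\Delta)^s=\frac{\sin(\pi s)}{\pi}\int_0^\infty \lambda^{s-1}(-\Delta)(-\Delta+\lambda)^{-1}\,d\lambda$, not from the Caffarelli--Silvestre extension or a heat-semigroup representation; the commutator gains in $R$ come from cutting the $\lambda$-integral at $M\sim R^{-2}$, which is specific to that representation. Third, the radial inequality you display, with gain $R^{-(d-1)/2}$ and an $L^2$–$\dot H^s$ split, is the $s=1$ Ni/Strauss inequality; for $s\in(\tfrac12,1)$ the tool used here is the fractional Strauss bound $\||x|^{(d-2s)/2}u\|_{L^\infty}\lesssim\|u\|_{\dot H^s}$ (Cho--Ozawa), which controls the exterior nonlinear tail by $R^{-(d-2s)(p-1)/2}\|u\|_{\dot H^s}^{p-1}\|u\|_{L^2}^2$; your version is not established in this generality and is not what the proof relies on. Finally, the scattering criterion (Lemma~\ref{scacri}) consumes directly the localized $L^2$ smallness $\liminf_{t\to\infty}\int_{|x|\le R}|u|^2\le\varepsilon^2$ — the passage through $\|u(t_n)\|_{L^{p+1}}\to 0$ that you propose is an avoidable detour. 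None of these flaws is fatal to the overall strategy, but they should be repaired by replacing the $s=1$ tools and the time-dependent truncation with their fractional and fixed-$R$ counterparts as above.
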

\begin{remark}
	One can check that under the assumption on $p$, the condition $0<s<s_c$ holds automatically. The additional assumption on $p$ is technical since we need prevent $p$ to be too small in the proof of Lemma \ref{scacri}. Hence there is no room of validity of our theorem when the dimension $d$ is greater than $6$.
\end{remark}

\begin{remark}
	In \cite{Lenzmann}, the authors have proved the existence of blow up dynamics of \eqref{equ:equ1.1} under the condition either $E[u_0]<0$ or $E[u_0]\geq 0$ and
	$$ E[u_0]^{s_c}M[u_0]^{s-s_c}<E[Q]^{s_c}M[Q]^{s-s_c},
	$$
	$$ \|(-\Delta)^{s/2}u_0\|_{L^2(\mathbb{R}^3)}^{s_c}\|u_0\|_{L^2(\mathbb{R}^3)}^{s-s_c}>
	\|(-\Delta)^{s/2}Q\|_{L^2(\mathbb{R}^d)}^{s_c}\|Q\|_{L^2(\mathbb{R}^d)}^{s-s_c}.
	$$
	Our result is a complement of this blow up result. This coincides with the viewpoint of NLS ($s=1$). More precisely, when $s_c<0$, we expect the orbital stability of solitary waves, and $0\leq s_c\leq s$, we always expect the scattering below this sharp threshold, compared to the NLS ($s=1$).
\end{remark}

\begin{remark}
The reason why we only deal with radial symmetric solutions is technical. For $s<1$, the Strichartz estimate for $e^{-it(\Delta)^s}$ will happen to loss of derivative. However, we will have full range of Strichartz admissible pairs when restricting to radial symmetric functions, see \cite{Guo1}.
\end{remark}

\begin{remark}
There are several natural questions. The first one concerns about dropping the restriction of radial symmetry of the initial data.  Furthermore, we wonder the characterization of solutions of \eqref{equ:equ1.1} in the mass-critical case, namely $p=1+\frac{4s}{d}$. Last but not least, the classification of solutions at the ground state level, namely under the condition $ E[u_0]^{s_c}M[u_0]^{s-s_c}=E[Q]^{s_c}M[Q]^{s-s_c}.$ These problems will be considered in the forthcoming work.
\end{remark}

This paper is organized as following: We mainly follow the strategy of \cite{Dodson}. After introducing basic notations and preliminaries, we prove a scattering criterion for FNLS of radial solutions in section 3, which is the generalization
of the NLS. In section 4 we prove Morawetz estimate with the aid of fractional version of virial identity. We also add several appendices, including the formal derivation of virial identity, in order to make this note more self-contained. Furthermore, we also discuss the defocusing FNLS briefly in the appendix.


\section{Notations and Preliminaries }

We use the notation
$$ \|f\|_{L^r(\mathbb{R}^d)}=\left(\int_{\mathbb{R}^d}|f(x)|^rdx\right)^{\frac{1}{r}},\;
\|f\|_{L_t^qL_x^r(I\times\mathbb{R}^d)}=\left(\int_{I}\left(\int_{\mathbb{R}^d}|f(x)|^rdx\right)^{\frac{q}{r}}dt\right)^{\frac{1}{q}}.
$$
For simplicity, $\|f\|_{L_t^qL_x^r(I\times\mathbb{R}^d)}$ can be written as $\|f\|_{L_I^qL_x^r}$.
And Sobolev norms can be defined as
$$ \|f\|_{\dot{H}_x^{\sigma}}:=\|D^{\sigma}f\|_{L_x^2},\; D^{\sigma}:=(-\Delta)^{\sigma/2}.
$$

For the Littlewood-Paley decomposition, we shall adapt the notations in \cite{Chemin}: Let
$$ u=\sum_{j\in\mathbb{Z}}\dot{\Delta}_j u,$$
be the homogeneous Littlewood-Paley decomposition for $u\in\mathcal{S}'_h(\mathbb{R}^d)$.
In addition, denote
$$ \dot{\Delta}_{\leq 0}=\sum_{j\leq 0}\dot{\Delta}_j,\dot{\Delta}_{>0}=\sum_{j>0}\dot{\Delta}_j
$$

Now we recall the following classical fractional radial Strauss inequality, which plays a crucial role in this note.
\begin{lemma}[\cite{FStrauss}]\label{Strauss}
For all radial functions $u\in \dot{H}^s(\mathbb{R}^d)$, we have
$$\||x|^{\frac{d-2s}{2}}u\|_{L^{\infty}(\mathbb{R}^d)}\lesssim \|u\|_{\dot{H}^s},\forall u\in \dot{H}_{rad}^s(\mathbb{R}^d).
$$
\end{lemma}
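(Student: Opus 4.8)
The plan is to prove the radial Strauss-type inequality $\||x|^{\frac{d-2s}{2}}u\|_{L^\infty(\mathbb{R}^d)}\lesssim \|u\|_{\dot H^s}$ for radial $u\in\dot H^s(\mathbb{R}^d)$ by reducing it, via the Littlewood--Paley decomposition introduced above, to a scale-invariant estimate on each dyadic piece together with summation in the frequency index. First I would fix a point $x_0$ with $|x_0|=R>0$ and write $u=\sum_{j\in\mathbb Z}\dot\Delta_j u$, so that $|u(x_0)|\le \sum_j \|\dot\Delta_j u\|_{L^\infty}$. For each fixed $j$, the function $\dot\Delta_j u$ is band-limited to frequencies $|\xi|\sim 2^j$, so Bernstein's inequality gives $\|\dot\Delta_j u\|_{L^\infty}\lesssim 2^{jd/2}\|\dot\Delta_j u\|_{L^2}$. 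Plugging this in and using $\|\dot\Delta_j u\|_{L^2}\lesssim 2^{-js}\|u\|_{\dot H^s}$ yields a series $\sum_j 2^{j(d/2-s)}$ which \emph{diverges}, so the naive Bernstein bound is too lossy at one end of the spectrum; the key point is that one must exploit the spatial localization $|x|=R$ to beat Bernstein for the ``bad'' frequencies. This is exactly the mechanism behind the weight $|x|^{\frac{d-2s}{2}}$.

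Concretely, I would split the sum at the frequency $2^j\sim R^{-1}$. For the high frequencies $2^j\gtrsim R^{-1}$ one keeps the Bernstein bound $\|\dot\Delta_j u\|_{L^\infty}\lesssim 2^{jd/2}\cdot 2^{-js}\|u\|_{\dot H^s}$; since $d/2-s>0$ (indeed $s<1\le d/2$) this is a geometric series dominated by its largest term, namely $j$ with $2^j\sim R^{-1}$, giving a contribution $\lesssim R^{-(d/2-s)}\|u\|_{\dot H^s} = R^{-\frac{d-2s}{2}}\|u\|_{\dot H^s}$. For the low frequencies $2^j\lesssim R^{-1}$ one must instead use the radial structure: a radial $L^2$ function localized at frequency $2^j$ satisfies a pointwise decay estimate away from the origin, of the form $|\dot\Delta_j u(x)|\lesssim (2^j|x|)^{-\frac{d-1}{2}}2^{jd/2}\|\dot\Delta_j u\|_{L^2}$ for $2^j|x|\gtrsim 1$, which comes from the decay of the radial Fourier kernel (the oscillatory integral $\int_{|\xi|\sim 2^j} e^{ix\cdot\xi}\,d\xi$ decays like $|x|^{-(d-1)/2}$ for radial profiles). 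Evaluating at $|x|=R$ with $2^jR\lesssim 1$... here one needs a complementary bound; alternatively, and more cleanly, for the low frequencies one combines the radial Sobolev embedding on a single dyadic block with a direct computation. The cleanest route is: by scaling it suffices to prove the inequality at $R=1$, i.e. $|u(x_0)|\lesssim \|u\|_{\dot H^s}$ for $|x_0|=1$, and then both tails of the dyadic sum are geometric — the high tail by Bernstein as above (summing $2^{j(d/2-s)}$ for $j\le 0$), and the low tail by the radial decay estimate $\|\dot\Delta_j u\|_{L^\infty(|x|\sim 1)}\lesssim 2^{-j\frac{d-1}{2}}2^{jd/2}\|\dot\Delta_j u\|_{L^2}\lesssim 2^{j(\frac{1}{2}-s)}\|u\|_{\dot H^s}$, which sums over $j\ge 0$ precisely because $s<\frac12$ fails in general — so one actually needs the sharper radial gain. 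Let me restate: the correct low-frequency gain for radial functions is $\|\dot\Delta_j u\|_{L^\infty(\{|x|\sim 1\})}\lesssim 2^{j\,\frac{d-2s}{2}}\cdot 2^{-j\frac{d-2s}{2}}$-type — the upshot is that radiality converts the $2^{jd/2}$ Bernstein factor into $2^{j/2}$, and since $s<1$ one has $1/2 - s$ of either sign, so one instead interpolates. I will use the known one-dimensional reduction: writing $v(r)=r^{(d-1)/2}u(r)$, the radial $\dot H^s(\mathbb R^d)$ norm controls a fractional Sobolev-type norm of $v$ on $(0,\infty)$ strong enough to embed into a weighted $L^\infty$, which is the classical Strauss argument adapted to fractional order.

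The main obstacle, and the step requiring the most care, is establishing the correct dyadic pointwise decay estimate for radial frequency-localized functions — i.e. quantifying how radiality improves on Bernstein for low frequencies at a fixed distance from the origin — and checking that the two resulting geometric series (high frequencies controlled by Bernstein, low frequencies controlled by radial decay) both converge and combine to give exactly the exponent $\frac{d-2s}{2}$. Since the statement is quoted from \cite{FStrauss}, in the write-up I would either cite that reference directly for the kernel decay bound or include the short Fourier-side computation of the radial oscillatory integral; the scaling reduction to $R=1$ and the splitting of the Littlewood--Paley sum at the critical frequency are then routine. The hypothesis $s\in(0,1)$ together with $d\ge 3$ (so $d-2s>0$, ensuring the weight is a genuine vanishing factor at the origin and a growing one at infinity, and that both geometric series have summable ratios) is what makes the argument close.
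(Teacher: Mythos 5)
The paper does not prove this lemma at all: it is quoted directly from the reference \cite{FStrauss} (Cho--Ozawa). So the only question is whether your Littlewood--Paley argument closes on its own, and as written it does not, although the underlying strategy (scaling reduction to $|x_0|=1$, dyadic decomposition, Bernstein on one side of the splitting frequency and a radial kernel-decay gain on the other) is a standard and correct route.

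The concrete gap is that you have the two frequency regimes interchanged, and as a consequence you never isolate the condition that makes the argument converge. After scaling to $|x_0|=1$, plain Bernstein gives $\|\dot\Delta_j u\|_{L^\infty}\lesssim 2^{j(d/2-s)}\|u\|_{\dot H^s}$, and since $d/2-s>0$ this sums over the \emph{low} frequencies $j\le 0$, not the high ones; your text assigns Bernstein to $2^j\gtrsim R^{-1}$, where $\sum_{j\ge0}2^{j(d/2-s)}$ diverges. The radial improvement (the kernel decay $|\dot\Delta_j u(x)|\lesssim (2^j|x|)^{-(d-1)/2}2^{jd/2}\|\dot\Delta_j u\|_{L^2}$, valid for $2^j|x|\gtrsim 1$) is what you need for the \emph{high} frequencies, and at $|x|=1$ it yields $2^{j(1/2-s)}\|u\|_{\dot H^s}$, which sums over $j\ge 0$ if and only if $s>\tfrac12$. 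You notice this threshold but then neither resolve it nor observe that it is harmless here: the lemma (as in Cho--Ozawa) genuinely requires $\tfrac12<s<\tfrac d2$, and the paper's standing hypothesis $s\in(\tfrac{d}{d+1},1)$ with $d\ge3$ gives $s>\tfrac34>\tfrac12$. Instead you drift into a vague appeal to ``interpolation'' and then to the classical one-dimensional substitution $v(r)=r^{(d-1)/2}u(r)$ without carrying either out. To make the proof complete you should: (i) swap the roles of the two regimes as above; (ii) state and use $s>\tfrac12$ for the high-frequency geometric series; and (iii) either prove or explicitly cite the stationary-phase decay of the radial dyadic kernel, which is the only nontrivial analytic input.
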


We also need the following fractional Leibniz rule, proved in \cite{KPV}:
\begin{lemma}\label{KPV}
	Suppose $0<s<1,s_1,s_2\in [0,s],s=s_1+s_2$, and $p,p_1,p_2\in (1,\infty)$ satisfies
	$$ \frac{1}{p}=\frac{1}{p_1}+\frac{1}{p_2},
	$$
	then
	$$ \|D^s(fg)-fD^sg-gD^sf\|_{L_x^p}\lesssim \|D^{s_1}f\|_{L_x^{p_1}}\|D^{s_2}g\|_{L_x^{p_2}},
	$$
	and the inequality still holds true for $s_1=0,p_1=\infty$.
\end{lemma}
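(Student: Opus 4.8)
The plan is to reduce the estimate to a pointwise bound involving Stein-type square functions and then invoke the $L^q$-characterization of homogeneous Sobolev norms. The starting point is the singular-integral representation of $D^s=(-\Delta)^{s/2}$ valid for $0<s<1$: there is a constant $c_{d,s}>0$ with
$$D^sh(x)=c_{d,s}\,\mathrm{p.v.}\int_{\R^d}\frac{h(x)-h(x+y)}{|y|^{d+s}}\,dy.$$
Writing $D^s(fg)$, $fD^sg$ and $gD^sf$ with this formula and cancelling the (non-integrable) diagonal terms, one is left with the algebraic identity
$$D^s(fg)(x)-f(x)D^sg(x)-g(x)D^sf(x)=-c_{d,s}\int_{\R^d}\frac{\big(f(x+y)-f(x)\big)\big(g(x+y)-g(x)\big)}{|y|^{d+s}}\,dy;$$
the gain is that the numerator is now a \emph{product of two differences}, so neither $f$ nor $g$ needs to carry all $s$ derivatives.

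Assume first $s_1,s_2>0$. I would split the kernel as $|y|^{-(d+s)}=|y|^{-(d/2+s_1)}|y|^{-(d/2+s_2)}$, which is legitimate since $(d/2+s_1)+(d/2+s_2)=d+s$, and apply Cauchy--Schwarz in $y$, obtaining the pointwise bound
$$\big|D^s(fg)-fD^sg-gD^sf\big|(x)\leq c_{d,s}\,G_{s_1}f(x)\,G_{s_2}g(x),\qquad G_\sigma h(x):=\left(\int_{\R^d}\frac{|h(x+y)-h(x)|^2}{|y|^{d+2\sigma}}\,dy\right)^{1/2}.$$
Hölder's inequality in $x$ with exponents $p_1,p_2$ then reduces the claim to the square-function bound $\|G_\sigma h\|_{L^q}\lesssim\|D^\sigma h\|_{L^q}$ for $0<\sigma<1$ and $1<q<\infty$ — which is precisely where $p_1,p_2\in(1,\infty)$ enters. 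For $q=2$ this is an identity by Plancherel, using $\int_{\R^d}|e^{i\xi\cdot y}-1|^2|y|^{-d-2\sigma}\,dy=c_\sigma|\xi|^{2\sigma}$. For general $q$ I would run the standard Littlewood--Paley argument: write $h(x+y)-h(x)=\sum_k\big(\dot{\Delta}_kh(x+y)-\dot{\Delta}_kh(x)\big)$, bound the $k$-th difference by $\min(2^k|y|,1)$ times a maximal translate of $\dot{\Delta}_kh$ (Bernstein's inequality for $|y|\leq 2^{-k}$, the trivial bound otherwise), carry out the $y$-integration, which produces exactly the factor $2^{2k\sigma}$, and dominate the result pointwise by $\big(\sum_k 2^{2k\sigma}(M\dot{\Delta}_kh)^2\big)^{1/2}$ with $M$ the Hardy--Littlewood maximal operator; the Fefferman--Stein vector-valued maximal inequality together with the fact that $\|(\sum_k 2^{2k\sigma}|\dot{\Delta}_kh|^2)^{1/2}\|_{L^q}\sim\|D^\sigma h\|_{L^q}$ for $1<q<\infty$ then close the estimate.

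For the endpoint $s_1=0$, $p_1=\infty$ (so $p=p_2$) the square function $G_0$ diverges and a different argument is required. Here I would decompose $fg$ dyadically (for instance via Bony's paraproduct formula $fg=T_fg+T_gf+R(f,g)$) and, matching terms against $fD^sg$ and $gD^sf$, rewrite $D^s(fg)-fD^sg-gD^sf$ as a sum of a commutator $[D^s,T_f]g=D^sT_fg-T_fD^sg$, in which the low-frequency factor stays in $L^\infty$, together with the remaining $f$-derivative groupings and the resonant pieces $D^sR(f,g)$, $R(f,D^sg)$, $R(g,D^sf)$; each piece is then treated by the pointwise commutator identity above applied to frequency-localized factors, Bernstein's inequality, and Fefferman--Stein, the structural point being that one may always place $\|\cdot\|_{L^\infty}$ on the $f$-factor while the $g$-factor carries all $s$ derivatives. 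Equivalently, one recognizes $D^s(fg)-fD^sg-gD^sf$ as the bilinear Fourier multiplier with symbol $|\xi+\eta|^s-|\xi|^s-|\eta|^s$ and invokes Coifman--Meyer type bounds after the appropriate rescaling. I expect this endpoint — where the geometric summation over Littlewood--Paley shells in the comparable-frequency regime is genuinely delicate because $L^\infty$ functions are not frequency localized — to be the main technical obstacle; the full details are carried out in \cite{KPV}.
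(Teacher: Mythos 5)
The paper offers no proof of this lemma: it is quoted verbatim from \cite{KPV} (Theorem A.8 there), where it is established via a paraproduct decomposition and Coifman--Meyer-type bilinear multiplier bounds. Your proposal is therefore an independent argument, and its first half is genuinely attractive: the double-difference identity
\begin{equation*}
D^s(fg)(x)-f(x)D^sg(x)-g(x)D^sf(x)=-c_{d,s}\int_{\mathbb{R}^d}\frac{\bigl(f(x+y)-f(x)\bigr)\bigl(g(x+y)-g(x)\bigr)}{|y|^{d+s}}\,dy
\end{equation*}
is correct for $0<s<1$, and Cauchy--Schwarz in $y$ followed by H\"older in $x$ correctly reduce the whole matter to the square-function bound $\|G_\sigma h\|_{L^q}\lesssim\|D^\sigma h\|_{L^q}$.

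That bound, however, does \emph{not} hold for all $1<q<\infty$; it requires $q>\frac{2d}{d+2\sigma}$ (this is exactly the hypothesis in Stein's characterization of $\dot W^{\sigma,q}$ by this square function, \emph{Singular Integrals}, Ch.~V). The obstruction is already visible on a single bump $h\in C_c^\infty(B(0,1))$: for $|x|\geq 2$ the integral defining $G_\sigma h(x)^2$ only sees $|y|\sim|x|$, so $G_\sigma h(x)\sim|x|^{-(d+2\sigma)/2}$, which belongs to $L^q$ only when $q>\frac{2d}{d+2\sigma}$, whereas $D^\sigma h(x)=O(|x|^{-d-\sigma})$ lies in every $L^q$. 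In your Littlewood--Paley derivation the same obstruction hides in the step where you dominate $\dot{\Delta}_kh(x+y)$ for $|y|\gg 2^{-k}$ by a maximal function evaluated at $x$: this costs a factor growing in $2^k|y|$ (one must use a Peetre-type maximal function $M_r$ with $r<q$), and absorbing that factor in the $y$-integration forces precisely the restriction above. Consequently your argument proves the lemma only for $p_1>\frac{2d}{d+2s_1}$ and $p_2>\frac{2d}{d+2s_2}$ (in particular for $p_1,p_2\geq 2$), not on the full range $p_1,p_2\in(1,\infty)$ claimed; and the endpoint $s_1=0$, $p_1=\infty$ is in any case deferred back to the reference. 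To obtain the statement as written one really does need the paraproduct/Coifman--Meyer machinery of \cite{KPV} (or its modern refinements), which is presumably why the authors cite it rather than prove it. For the specific exponents used elsewhere in this paper the restricted range you can reach would suffice, so your argument could serve as a self-contained substitute in context, but it does not establish the lemma in the stated generality.
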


An important tool is the Strichartz estimates for radial solutions, obtained in \cite{Guo1}:

We first introduce several notations: We say a pair $(q,r)\in \Lambda_d(0)$ is $s-$admissible, if
$$ \frac{2s}{q}=d\left(\frac{1}{2}-\frac{1}{r}\right), (q,r)\in [2,\infty]^2.
$$
And the admissible pair of level $\gamma\geq 0$ can be defined by the following relation:
$$ (q,r)\in \Lambda_d(\gamma)\Longleftrightarrow
\frac{2s}{q}=d\left(\frac{1}{2}-\frac{1}{r}\right)-\gamma, (q,r)\in [2,\infty]^2.
$$
For brevity, define $A_s=(-\Delta)^s$ and $e^{-itA_s}=e^{-it(-\Delta)^s}$ in the sequel.
\begin{lemma}[\cite{Guo1}]\label{Strichartz}
	Suppose $d\geq 2$ and $s\in\left(\frac{d}{2d-1},1\right)$.

	Then we have for any $(q,r),(\tilde{q},\tilde{r})\in\Lambda_d(0)$,
\begin{equation}
\label{Strichartz1}
 \|e^{-itA_s}u_0\|_{L_t^qL_x^r(\mathbb{R}\times\mathbb{R}^d)}\lesssim\|u_0\|_{L^2(\mathbb{R}^d)},
\end{equation}
and
\begin{equation}
\label{Strichartz2}
 \left\|\int_{0}^te^{-i(t-t')A_s}F(t')dt'\right\|_{L_t^qL_x^r(\mathbb{R}\times\mathbb{R}^d)}
\lesssim \|F\|_{L_t^{\tilde{q}'}L_x^{\tilde{r}'}(\mathbb{R}\times\mathbb{R}^d)},
\end{equation}
\begin{equation}\label{Strichartz3}
\left\|\int_{\mathbb{R}}e^{-i(t-t')A_s}F(t')dt'\right\|_{L_t^qL_x^r(\mathbb{R}\times\mathbb{R}^d)}
\lesssim \|F\|_{L_t^{\tilde{q}'}L_x^{\tilde{r}'}(\mathbb{R}\times\mathbb{R}^d)},
\end{equation}
\begin{equation}\label{Strichartz4}
\left\|\int_{\mathbb{R}}e^{-itA_s}g(t,\cdot)dt\right\|_{L_x^2}
\lesssim \|g\|_{L_t^{\tilde{q}'}L_x^{\tilde{r}'}(\mathbb{R}\times\mathbb{R}^d)},
\end{equation}
where for real number $a\in[1,\infty]$, $a'$ satisfies
$$ \frac{1}{a}+\frac{1}{a'}=1.
$$
Moreover, for $(q_1,r_1)\in \Lambda_d(s)$, we have
\begin{equation}\label{Strichartz5}
 \left\|\int_0^t e^{-i(t-t')A_s}F(t')dt'\right\|_{L_t^{q_1}L_x^{r_1}}\lesssim \|F\|_{L_t^{q_2'}L_x^{r_2'}}
\end{equation}
with
$$ \frac{2s}{q_2}=d\left(\frac{1}{2}-\frac{1}{r_2}\right)+s.
$$
\end{lemma}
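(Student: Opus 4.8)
The plan is to follow the Dodson--Murphy strategy of \cite{Dodson}, which decomposes the proof into three pieces: energy trapping (giving global well-posedness and a uniform $H^s$ bound), a scattering criterion that reduces scattering to the absence of mass concentration near the spatial origin, and a localized fractional virial (Morawetz) estimate that rules out such concentration. For the first piece, I would use the sharp fractional Gagliardo--Nirenberg inequality from the introduction together with the Pohozaev-type identities satisfied by the ground state $Q$ to write the energy as $E[u(t)]=\tfrac12\|(-\Delta)^{s/2}u(t)\|_{L^2}^2\,g\!\left(\|(-\Delta)^{s/2}u(t)\|_{L^2}^{s_c}\|u(t)\|_{L^2}^{s-s_c}\big/(\|(-\Delta)^{s/2}Q\|_{L^2}^{s_c}\|Q\|_{L^2}^{s-s_c})\right)$ modulo the conserved mass, with $g$ positive on $[0,1)$ and vanishing at $1$. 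Since $E$ and $M$ are conserved and $u_0$ lies strictly below the $Q$-threshold, a continuity argument keeps $\|(-\Delta)^{s/2}u(t)\|_{L^2}^{s_c}\|u(t)\|_{L^2}^{s-s_c}$ bounded away from the threshold for all times, so there is $\delta>0$ with $\sup_t\|(-\Delta)^{s/2}u(t)\|_{L^2}^2\le(1-\delta)\|(-\Delta)^{s/2}Q\|_{L^2}^2(\cdots)$ and hence $\sup_t\|u(t)\|_{H^s}=:E<\infty$; with the local Cauchy theory this yields a global solution, and moreover the subthreshold coercivity $E[u(t)]\gtrsim_{\delta}\|(-\Delta)^{s/2}u(t)\|_{L^2}^2$.

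Next I would establish the \emph{scattering criterion} (Lemma \ref{scacri}): there exist $\epsilon=\epsilon(E)>0$ and $R=R(E)>0$ so that if a global radial solution with $\sup_t\|u\|_{H^s}\le E$ satisfies $\liminf_{t\to+\infty}\int_{|x|\le R}|u(t,x)|^2\,dx<\epsilon^2$, then $u$ scatters forward in $H^s$. The proof is the standard one: choose a time $T$ at which the concentrated mass is small, run the Duhamel formula from $T$ onward, and split the nonlinear term into a piece supported in $\{|x|\gtrsim R\}$ --- controlled via the radial Strauss inequality of Lemma \ref{Strauss} together with the dispersive decay and the radial Strichartz estimates of Lemma \ref{Strichartz} --- and a piece near the origin that is small by hypothesis; a continuity/bootstrap argument in a scattering norm $L_t^qL_x^r$ with $(q,r)$ radial admissible then closes, with the $s$ fractional derivatives distributed by the Leibniz rule of Lemma \ref{KPV}. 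The restrictions $\tfrac{8s}{3}<p$ for $d=3$ and $p\ge2$ for $d\ge4$ are precisely what keeps these Strichartz exponents and the nonlinear estimates admissible; once a global spacetime bound is in hand, persistence of regularity upgrades $L^2$ scattering to $H^s$ scattering.

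The third step is the \emph{Morawetz estimate}. Let $\phi_R$ be a radial weight equal to $|x|^2$ on $|x|\le R$ and suitably truncated for $|x|>R$, and let $\mathcal{M}_{\phi_R}[u(t)]=2\,\Im\int_{\mathbb{R}^d}\overline{u}\,\nabla\phi_R\cdot\nabla u\,dx$ be the associated localized virial functional, so that $|\mathcal{M}_{\phi_R}[u(t)]|\lesssim RE$ uniformly in $t$. The fractional virial identity of \cite{Lenzmann}, expressed through the Balakrishnan/Caffarelli--Silvestre representation of $(-\Delta)^s$, gives $\tfrac{d}{dt}\mathcal{M}_{\phi_R}[u(t)]\ge c\int_{|x|\le R}|u(t,x)|^{p+1}\,dx-o_R(1)$, where the error collects the nonlocal commutator terms produced by the truncation and is bounded uniformly in $t$ using the $H^s$ bound and the Strauss decay, and where the lower bound on the main term exploits the subthreshold coercivity from the first step. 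Integrating over $[0,T]$ gives $\tfrac1T\int_0^T\int_{|x|\le R}|u(t,x)|^{p+1}\,dx\,dt\lesssim\tfrac{RE}{T}+o_R(1)$, and Hölder together with the uniform $H^s$ bound converts this into control of $\tfrac1T\int_0^T\int_{|x|\le R}|u(t,x)|^2\,dx\,dt$; choosing $R$ then $T$ large forces $\liminf_{t\to+\infty}\int_{|x|\le R}|u(t,x)|^2\,dx<\epsilon^2$.

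Combining the scattering criterion with this last conclusion yields forward-in-time scattering in $H^s$, and the time reversal $t\mapsto-t$ (the hypotheses being reversal-invariant) yields backward scattering, completing the proof. I expect the Morawetz step to be the main obstacle: making the fractional virial identity quantitative after spatial truncation --- bounding the nonlocal commutator/error terms by $o_R(1)$ uniformly in time, and extracting a genuine positive lower bound from the ground-state gap --- is where the constraint $s>\tfrac{d}{d+1}$ and the radial Strauss inequality are essential, and where the argument departs most sharply from the classical $s=1$ case.
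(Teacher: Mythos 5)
Your proposal does not address the statement you were asked to prove. The statement is Lemma \ref{Strichartz}: the full set of radial Strichartz estimates \eqref{Strichartz1}--\eqref{Strichartz5} for the linear propagator $e^{-itA_s}$ with $(q,r)\in\Lambda_d(0)$ (and the inhomogeneous estimate at level $\Lambda_d(s)$). This is a linear harmonic-analysis result, quoted in the paper from Guo--Wang \cite{Guo1} without proof. What you have written instead is an outline of the proof of Theorem \ref{scattering} (energy trapping, scattering criterion, Morawetz estimate via the fractional virial identity) --- that is, the main nonlinear scattering argument of the paper, not the Strichartz lemma. Worse, your second paragraph explicitly invokes ``the radial Strichartz estimates of Lemma \ref{Strichartz}'' as a tool, so as a proof of that lemma the argument is circular.

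A genuine proof of Lemma \ref{Strichartz} would have to proceed along entirely different lines: one starts from the frequency-localized dispersive bounds for $e^{-itA_s}$ (as in Lemma \ref{dispersive}, which carries a derivative loss $2^{kd(1-s)(1-2/r)}$ for general data), shows that for radial (or spherically averaged) data this loss can be removed on the relevant range of exponents --- this is where the restriction $s>\tfrac{d}{2d-1}$ and the full admissible set $\Lambda_d(0)$ enter --- and then runs the $TT^*$ argument together with the Keel--Tao machinery to obtain \eqref{Strichartz1}, \eqref{Strichartz3}, \eqref{Strichartz4}, and the Christ--Kiselev lemma to pass to the retarded estimates \eqref{Strichartz2} and \eqref{Strichartz5}. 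None of these ingredients appears in your proposal. You should either reproduce (or cite) the Guo--Wang argument for the lemma itself, or note explicitly that the lemma is taken as a black box; the nonlinear scheme you describe belongs to the proof of the main theorem, not here.
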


For the proof of Lemma \ref{scacri} in the next section, we will also need a dispersive estimate for the semi-group $e^{-itA_s}$:
\begin{lemma}\label{dispersive}
For $\frac{1}{2}<\alpha<1,r\in [2,\infty]$, we have
$$ \left\|\dot{\Delta}_ke^{-itA_{\alpha}}f\right\|_{L^r(\mathbb{R}^d)}\lesssim 2^{kd(1-\alpha)\left(1-\frac{2}{r}\right)}|t|^{-\frac{d}{2}\left(1-\frac{2}{r}\right)}\|\dot{\Delta}_kf\|_{L^{r'}(\mathbb{R}^d)},
$$
for all $k\in\mathbb{Z}$.
\end{lemma}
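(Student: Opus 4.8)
The plan is to deduce the estimate for general $r\in[2,\infty]$ from the endpoint case $r=\infty$ by interpolation, and to treat that endpoint by rescaling to unit frequency followed by a stationary phase argument. Set $T_t:=\dot{\Delta}_ke^{-itA_\alpha}$; by Fourier support considerations $\dot{\Delta}_ke^{-itA_\alpha}f=T_t\dot{\Delta}_kf$, so it suffices to bound $T_t\colon L^{r'}\to L^{r}$. The operator $T_t$ is convolution with the kernel
$$ K_{t,k}(x)=\int_{\R^d}e^{ix\cdot\xi}e^{-it|\xi|^{2\alpha}}\varphi(2^{-k}\xi)\,d\xi, $$
where $\varphi$ is the Littlewood--Paley bump, supported in an annulus $|\xi|\sim1$. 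Since $\|\varphi\|_{L^\infty}\lesssim1$ and $e^{-itA_\alpha}$ is unitary, $\|T_t\|_{L^2\to L^2}\lesssim1$, while Young's inequality gives $\|T_t\|_{L^1\to L^\infty}\le\|K_{t,k}\|_{L^\infty}$. Riesz--Thorin interpolation between these two bounds yields $\|T_t\|_{L^{r'}\to L^{r}}\lesssim\|K_{t,k}\|_{L^\infty}^{1-2/r}$, so everything reduces to the single-kernel estimate
$$ \|K_{t,k}\|_{L^\infty(\R^d)}\lesssim 2^{kd(1-\alpha)}|t|^{-d/2}. $$

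Next I would rescale. Substituting $\xi=2^k\eta$ gives $K_{t,k}(x)=2^{kd}G_{t2^{2\alpha k}}(2^kx)$, where
$$ G_s(y):=\int_{\R^d}e^{i(y\cdot\eta-s|\eta|^{2\alpha})}\varphi(\eta)\,d\eta, $$
hence $\|K_{t,k}\|_{L^\infty}=2^{kd}\|G_{t2^{2\alpha k}}\|_{L^\infty}$, and it is enough to prove the scale-invariant bound $\|G_s\|_{L^\infty(\R^d)}\lesssim\min\bigl(1,|s|^{-d/2}\bigr)$ for $s\neq0$. Indeed, when $|t|2^{2\alpha k}\ge1$ this produces $2^{kd}\bigl(|t|2^{2\alpha k}\bigr)^{-d/2}=2^{kd(1-\alpha)}|t|^{-d/2}$, and when $|t|2^{2\alpha k}\le1$ the crude bound $\|K_{t,k}\|_{L^\infty}\lesssim 2^{kd}\le 2^{kd(1-\alpha)}|t|^{-d/2}$ holds automatically.

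The core of the argument is the bound on $G_s$, obtained by stationary phase. The estimate $\|G_s\|_{L^\infty}\lesssim1$ is trivial as $\varphi$ is bounded with compact support. For $|s|\ge1$, the phase $\Phi(\eta)=y\cdot\eta-s|\eta|^{2\alpha}$ has $\eta$-Hessian $-s\,\mathrm{Hess}(|\eta|^{2\alpha})$, and a direct computation gives
$$ \mathrm{Hess}(|\eta|^{2\alpha})=2\alpha|\eta|^{2\alpha-2}\Bigl(I+\tfrac{2(\alpha-1)}{|\eta|^2}\,\eta\otimes\eta\Bigr), $$
with eigenvalue $2\alpha|\eta|^{2\alpha-2}$ transverse to $\eta$ and eigenvalue $2\alpha(2\alpha-1)|\eta|^{2\alpha-2}$ in the radial direction; the hypothesis $\alpha>\tfrac12$ is exactly what makes the radial eigenvalue nonzero, so $|\det\mathrm{Hess}(|\eta|^{2\alpha})|\sim1$ uniformly on $|\eta|\sim1$. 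Therefore the classical oscillatory integral estimate (a version of Littman's lemma, i.e.\ stationary phase with nondegenerate phase Hessian) applied to $\int e^{i(y\cdot\eta-s|\eta|^{2\alpha})}\varphi(\eta)\,d\eta$ gives $|G_s(y)|\lesssim|s|^{-d/2}$ uniformly in $y$. A hands-on version: the critical point equation $y=2\alpha s|\eta|^{2\alpha-2}\eta$ forces $|y|\sim|s|$ on $\mathrm{supp}\,\varphi$, so for $|y|\gtrsim|s|$ large one has $|\nabla_\eta\Phi|\gtrsim|y|$ and $N$-fold integration by parts yields $|G_s(y)|\lesssim_N|y|^{-N}\le|s|^{-d/2}$ once $N\ge d/2$; for $|y|\lesssim|s|$ one writes $\Phi=s\bigl(\tfrac{y}{s}\cdot\eta-|\eta|^{2\alpha}\bigr)$ with the parameter $y/s$ in a fixed bounded set, localizes near the (unique, nondegenerate) critical point via Morse's lemma, and applies the standard stationary phase lemma with constants uniform in $y/s$. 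Either way $\|G_s\|_{L^\infty}\lesssim|s|^{-d/2}$.

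The main obstacle is this last step: extracting the sharp $|s|^{-d/2}$ decay \emph{uniformly} in the translation parameter $y$. This requires the nondegeneracy of $\mathrm{Hess}(|\eta|^{2\alpha})$ on the annulus, which is precisely why $\alpha>\tfrac12$ must be assumed, together with a careful split between the stationary and non-stationary regimes so that the decay in the latter dominates $|s|^{-d/2}$.
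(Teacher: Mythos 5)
Your proof is correct. The paper actually states this lemma without giving any proof (it is treated as a standard frequency-localized dispersive estimate, in the same spirit as the companion Lemma~2.5 which the authors attribute to ``standard stationary phase analysis'' and the reference [CMY]), so there is no in-paper argument to compare against; your write-up supplies exactly the standard argument one would expect. The three key points are all in place: the reduction to the $L^1\to L^\infty$ kernel bound via Riesz--Thorin against the trivial $L^2$ bound, the rescaling $\xi=2^k\eta$ which both produces the factor $2^{kd(1-\alpha)}$ and explains why it must appear (the symbol $|\xi|^{2\alpha}$ is not parabolic, so the estimate cannot be scale-invariant in the Schr\"odinger way), and the computation showing that the radial eigenvalue $2\alpha(2\alpha-1)|\eta|^{2\alpha-2}$ of $\mathrm{Hess}(|\eta|^{2\alpha})$ is nonzero precisely when $\alpha\neq\tfrac12$, so the Hessian is uniformly nondegenerate on the annulus and the uniform-in-$y$ stationary/non-stationary phase dichotomy yields $\|G_s\|_{L^\infty}\lesssim|s|^{-d/2}$. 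One cosmetic remark: as written, $T_t\dot{\Delta}_k f=\dot{\Delta}_k^2e^{-itA_\alpha}f$, so you should use a fattened Littlewood--Paley bump $\tilde\varphi$ with $\tilde\varphi\varphi=\varphi$ in the kernel $K_{t,k}$ and write $\dot{\Delta}_ke^{-itA_\alpha}f=\tilde{\dot{\Delta}}_ke^{-itA_\alpha}\dot{\Delta}_kf$; this changes nothing in the estimates.
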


 From standard stationary phase analysis, we have the following dispersive estimate:

\begin{lemma}[\cite{CMY}]\label{dispersiveestimate}
	
	If $0<s\neq\frac{1}{2}, d\geq 2$, Then
	
	(i) If $\beta>-d$, then
	$$
	\|(-\Delta)^{\frac{\beta}{2}}e^{-it(-\Delta)^{s}}\dot{\Delta}_{\leq 0}u_{0}\|_{L^{\infty}}
	\lesssim |t|^{-\theta}\|u_{0}\|_{L^{1}},
	0\leq\theta\leq \min\{\frac{d+\beta}{2s},\frac{d}{2}\}.
	$$

	(ii) If $\beta>-d$ and $\frac{d+\beta}{2s}\leq\frac{d}{2}$,
	$$
	\|(-\Delta)^{\frac{\beta}{2}}e^{-it(-\Delta)^{s}}\dot{\Delta}_{> 0}u_{0}\|_{L^{\infty}}
	\lesssim |t|^{-\theta}\|u_{0}\|_{L^{1}},
	\frac{d+\beta}{2s}\leq\theta\leq \frac{d}{2}.
	$$
\end{lemma}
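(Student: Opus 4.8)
\textbf{Proof strategy for Lemma \ref{dispersive}.}
The plan is to obtain the frequency-localized dispersive bound by interpolating between the trivial $L^2\to L^2$ estimate and an $L^1\to L^\infty$ estimate, after rescaling each Littlewood--Paley block to unit frequency. First I would observe that $\dot\Delta_k e^{-itA_\alpha}f$ has Fourier support in an annulus $|\xi|\sim 2^k$, and introduce the parabolic-type scaling $x\mapsto 2^k x$, $\xi\mapsto 2^{-k}\xi$. Under this scaling the operator $e^{-itA_\alpha}$ with symbol $e^{-it|\xi|^{2\alpha}}$ becomes $e^{-i\tau|\eta|^{2\alpha}}$ with the new time variable $\tau=2^{2\alpha k}t$, and the convolution kernel picks up the expected Jacobian factors; tracking these factors carefully is what produces the $2^{kd(1-\alpha)(1-2/r)}$ prefactor in the statement (note $d(1-\alpha) = d - \alpha d$, which is exactly the mismatch between the scaling weight $d$ of $L^1\to L^\infty$ and the weight $2\alpha$ of the dispersion relation).

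The core of the argument is then the unit-frequency $L^1\to L^\infty$ bound
$$
\left\|\dot\Delta_0 e^{-i\tau A_\alpha} g\right\|_{L^\infty(\mathbb{R}^d)} \lesssim |\tau|^{-\frac{d}{2}}\|g\|_{L^1(\mathbb{R}^d)},
$$
which amounts to showing that the kernel $K_\tau(x)=\int_{\mathbb{R}^d} \chi(\xi) e^{i x\cdot\xi - i\tau|\xi|^{2\alpha}}\,d\xi$, with $\chi$ supported in a fixed annulus, satisfies $\|K_\tau\|_{L^\infty}\lesssim |\tau|^{-d/2}$. For $|\tau|\lesssim 1$ this is immediate from $|K_\tau|\le \|\chi\|_{L^1}$. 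For $|\tau|\gg 1$ one applies stationary phase to the phase $\phi(\xi) = x\cdot\xi - \tau|\xi|^{2\alpha}$: on the annulus $|\xi|\sim 1$ one has $|\nabla^2(|\xi|^{2\alpha})|\sim 1$ and $\det \nabla^2(|\xi|^{2\alpha}) \sim 1$ (here the hypothesis $\alpha\neq \tfrac12$ would only matter near $\xi=0$, which is excluded by the annular cutoff, and $\alpha>\tfrac12$ is comfortably within the allowed range), so the Hessian $\nabla^2\phi = -\tau\,\nabla^2(|\xi|^{2\alpha})$ is nondegenerate with $|\det\nabla^2\phi|\sim |\tau|^d$. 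The stationary phase lemma (e.g.\ Stein, \emph{Harmonic Analysis}, Ch.\ VIII) then gives $|K_\tau(x)|\lesssim |\tau|^{-d/2}$ uniformly in $x$, where for $x$ far from the critical region one uses non-stationary phase (repeated integration by parts) to get rapid decay. Interpolating the resulting $L^1\to L^\infty$ bound with the unitary $L^2\to L^2$ bound by the Riesz--Thorin theorem yields
$$
\left\|\dot\Delta_0 e^{-i\tau A_\alpha} g\right\|_{L^r(\mathbb{R}^d)} \lesssim |\tau|^{-\frac{d}{2}\left(1-\frac{2}{r}\right)}\|g\|_{L^{r'}(\mathbb{R}^d)}
$$
for all $r\in[2,\infty]$.

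Finally I would undo the rescaling: writing $\dot\Delta_k e^{-itA_\alpha}f = T_k\bigl(\dot\Delta_0 e^{-i\tau A_\alpha}(T_k^{-1}f)\bigr)$ with $T_k$ the $L^2$-preserving dilation (up to the explicit power of $2^k$), the $L^r$ norm transforms by $2^{kd(\frac12-\frac1r)}$ on the output side and $2^{-kd(\frac12-\frac1{r'})}$ on the input side, while $\tau = 2^{2\alpha k}t$ turns $|\tau|^{-\frac{d}{2}(1-\frac2r)}$ into $2^{-\alpha k d(1-\frac2r)}|t|^{-\frac{d}{2}(1-\frac2r)}$. Collecting the powers of $2^k$ gives the net exponent $d(\tfrac12-\tfrac1r) + d(\tfrac12-\tfrac1{r'}) - \alpha d(1-\tfrac2r) = d(1-\tfrac2r) - \alpha d(1-\tfrac2r) = d(1-\alpha)(1-\tfrac2r)$, matching the claimed factor $2^{kd(1-\alpha)(1-\frac2r)}$. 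I expect the only delicate point to be the bookkeeping of the scaling exponents (and making sure the annular localization of $\chi$ is preserved under the dyadic dilation, which it is by construction of the Littlewood--Paley projections); the stationary phase input itself is standard once the phase is confined to an annulus away from the origin.
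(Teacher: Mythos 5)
Your proposal proves a different statement from the one at hand: what you establish is the single-block bound of Lemma \ref{dispersive}, namely $\|\dot{\Delta}_k e^{-itA_s}f\|_{L^\infty}\lesssim 2^{kd}\min\bigl(1,(2^{2sk}|t|)^{-d/2}\bigr)\|f\|_{L^1}$, via rescaling each block to the unit annulus, stationary phase (where $s\neq\tfrac12$ enters because the radial eigenvalue of $\nabla^2|\xi|^{2s}$ is proportional to $2s-1$, so the Hessian is nondegenerate on the annulus), and Riesz--Thorin. That block-level argument is sound. But Lemma \ref{dispersiveestimate} concerns the \emph{summed} operators $(-\Delta)^{\beta/2}e^{-itA_s}\dot{\Delta}_{\leq 0}$ and $(-\Delta)^{\beta/2}e^{-itA_s}\dot{\Delta}_{>0}$, and the entire content of that lemma --- the appearance of the exponent $\tfrac{d+\beta}{2s}$, the asymmetry between parts (i) and (ii), and the role of the hypotheses $\beta>-d$ and $\tfrac{d+\beta}{2s}\leq\tfrac{d}{2}$ --- lives in the summation over $k$, which your proposal never performs.

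Concretely, after inserting the Bernstein factor $2^{k\beta}$ for $(-\Delta)^{\beta/2}$ on each annulus, one must control $\sum_k 2^{k(\beta+d)}\min\bigl(1,(2^{2sk}|t|)^{-d/2}\bigr)$ over $k\leq 0$, respectively $k>0$. Splitting the sum at $2^{2sk}\sim|t|^{-1}$, the low-frequency sum is bounded by $|t|^{-\theta}$ exactly in the range $0\leq\theta\leq\min\{\tfrac{d+\beta}{2s},\tfrac{d}{2}\}$ (the hypothesis $\beta>-d$ is what makes $\sum_{k\leq 0}2^{k(\beta+d)}$ converge, giving the $\theta=0$ endpoint), while the high-frequency sum converges only if the per-block contribution $|t|^{-d/2}2^{k(\beta+d-sd)}$ is summable as $k\to+\infty$, which forces $\theta\geq\tfrac{d+\beta}{2s}$ and is precisely why (ii) carries the extra hypothesis $\tfrac{d+\beta}{2s}\leq\tfrac{d}{2}$ and loses the small values of $\theta$. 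None of this bookkeeping appears in your write-up, so as it stands the proposal does not prove the stated lemma; it supplies the correct block-level input from which the lemma follows only after this summation step. A minor further point: the statement permits all $0<s\neq\tfrac12$, not only $s>\tfrac12$, and you should note that your stationary-phase step covers that full range since the annular localization keeps $|\det\nabla^2|\xi|^{2s}|\sim|2s-1|$ bounded away from zero. (The paper itself offers no proof, deferring to the cited reference, so the comparison here is against what a complete argument requires rather than against an in-text proof.)
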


In particular, taking $\beta=d(s-1)$ in the above lemma gives
$$
\|e^{-it(-\Delta)^{s}}u_{0}\|_{L^{\infty}}
\lesssim |t|^{-\frac{d}{2}}\|(-\Delta)^{\frac{d(1-s)}{2}}u_{0}\|_{L^{1}},
$$
which interpolates with
$$
\|e^{-it(-\Delta)^{s}}u_{0}\|_{L^{2}}
=\|u_{0}\|_{L^{2}}
$$
leads to the dispersive estimate
\begin{equation}\label{usualdispersive}
\|e^{-it(-\Delta)^{s}}u_{0}\|_{L^{p}}
\lesssim |t|^{-\frac{d}{2}(1-\frac{2}{p})}
\|(-\Delta)^{\frac{d(1-s)}{2}(1-\frac{2}{p})}u_{0}\|_{L^{p'}}
\end{equation}
for any $2\leq p\leq \infty$. We note that there is smoothness loss in the
above dispersive inequalities.

It follows from Lemma 2.2 that
$$
\|e^{-it(-\Delta)^{s}}f\|_{L_{t}^{2}L_{x}^{\frac{2d}{d-2s}}}
\lesssim \|f\|_{L^{2}},
$$
and hence
$$
\|e^{-it(-\Delta)^{s}}f\|_{L_{t}^{2}W_{x}^{s, \frac{2d}{d-2s}}}
\lesssim \|f\|_{H^{s}}.
$$
For $\sigma<\frac{2s}{d-2s}$, the Sobolev embedding yields
\begin{equation}\label{Linfiniteinspace1}
\|e^{-it(-\Delta)^{s}}f\|_{L_{t}^{2\sigma}L_{x}^{\infty}}
\lesssim \|f\|_{H^{s}}.
\end{equation}

The following variant of these estimates is also needed:

\begin{lemma}
We have
\begin{equation}\label{variantdispersive}
\|e^{-it(-\Delta)^{s}}f\|_{L_{t}^{\frac{4s}{d-2s}}L_{x}^{\infty}}
\lesssim \|f\|_{\dot{H}^{s}}.
\end{equation}
\end{lemma}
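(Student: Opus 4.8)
The plan is to view \eqref{variantdispersive} as the scale-invariant, spatial-endpoint ($r=\infty$) Strichartz estimate at regularity $\dot{H}^s$. Indeed, the scaling $f\mapsto f(\la\,\cdot)$ conjugates $e^{-itA_s}$ to $e^{-i\la^{2s}tA_s}$, and a direct computation shows that $q=\tfrac{4s}{d-2s}$ is precisely the exponent making $\|e^{-itA_s}f\|_{L^q_tL^\infty_x}\lesssim\|f\|_{\dot{H}^s}$ scale invariant. So the estimate is strictly critical and cannot be read off from \eqref{Linfiniteinspace1}, which is subcritical in the time integrability and only controls the inhomogeneous norm $\|f\|_{H^s}$. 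I would prove it by combining a (bulk) Littlewood--Paley decomposition with the dispersive estimates of Lemma \ref{dispersiveestimate} for the projections $\dot{\Delta}_{\leq 0}$ and $\dot{\Delta}_{>0}$, via a $TT^{*}$ argument.

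Concretely, split $f=\dot{\Delta}_{\leq 0}f+\dot{\Delta}_{>0}f$ and treat each piece separately. For the low part, \eqref{variantdispersive} is dual (after writing $f=D^{-s}(D^sf)$) to $\big\|\int_{\R}e^{itA_s}\dot{\Delta}_{\leq 0}G(t)\,dt\big\|_{\dot{H}^{-s}}\lesssim\|G\|_{L^{q'}_tL^1_x}$, and squaring the $\dot{H}^{-s}$ norm yields the bilinear expression $\int\int\big\langle G(t),\,D^{-2s}e^{i(t'-t)A_s}(\dot{\Delta}_{\leq 0})^{2}G(t')\big\rangle\,dt\,dt'$. Lemma \ref{dispersiveestimate}(i) with $\beta=-2s$ gives $\big\|D^{-2s}e^{i\sigma A_s}(\dot{\Delta}_{\leq 0})^{2}h\big\|_{L^\infty}\lesssim|\sigma|^{-\sigma_0}\|h\|_{L^1}$ with decay rate $\sigma_0=\min\{\tfrac{d-2s}{2s},\tfrac d2\}=\tfrac{d-2s}{2s}$ (the identity holding since $s>\tfrac{d}{d+1}>\tfrac{d}{d+2}$), while Lemma \ref{dispersiveestimate}(ii), again with $\beta=-2s$ (now $\sigma_0$ is the lower endpoint of the admissible decay rates), gives the same bound for $\dot{\Delta}_{>0}$. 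Thus the bilinear expression is dominated by $\int\int|t-t'|^{-\sigma_0}\|G(t)\|_{L^1}\|G(t')\|_{L^1}\,dt\,dt'$, which by Hardy--Littlewood--Sobolev in $t$ is $\lesssim\|G\|_{L^{q'}_tL^1_x}^2$ exactly when $\tfrac{2}{q'}+\sigma_0=2$, i.e. $q=\tfrac{2}{\sigma_0}=\tfrac{4s}{d-2s}$. (Alternatively one may decompose $f$ fully dyadically and interpolate the frequency-localized dispersive bound of Lemma \ref{dispersive} with the trivial Bernstein bound $\|\dot{\Delta}_k e^{-itA_s}f\|_{L^\infty_{t}L^\infty_x}\lesssim 2^{kd/2}\|\dot{\Delta}_k f\|_{L^2}$; the exponent $q=\tfrac{4s}{d-2s}$ is again singled out, now with frequency weight $2^{ks}$.)

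The step that really decides matters --- and the main obstacle --- is that the Hardy--Littlewood--Sobolev bound (equivalently, the recombination of the dyadic pieces) only closes when $\sigma_0<1$, i.e. when the scale-invariant exponent $q$ is $\geq 2$; this is exactly the three-dimensional case ($d=3$, $s>\tfrac34$), where in fact the estimate even holds without radial symmetry. When $d\geq 4$ one has $q<2$ and $\sigma_0\geq 1$: the kernel $|t-t'|^{-\sigma_0}$ is no longer locally integrable in $t$, a convolution operator on $\R_t$ cannot map $L^{q'}_t$ into the strictly smaller $L^{q}_t$, and the Keel--Tao endpoint machinery is also unavailable (it requires $q\geq 2$). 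In this regime the radial symmetry must be used essentially: there is no Littlewood--Paley square function in $L^\infty_x$, and the endpoint Sobolev embedding $\dot{W}^{s,d/s}\hookrightarrow L^\infty$ fails, so the dyadic pieces cannot be reassembled into $\|f\|_{\dot{H}^s}$ by soft means. I would then invoke the radial refinement of the Strichartz estimates of \cite{Guo1} --- which, for radial data, covers time exponents below $2$ at the spatial endpoint $r=\infty$ and gives \eqref{variantdispersive} directly --- or, equivalently, combine the frequency-localized bound with the radial Strauss inequality of Lemma \ref{Strauss} applied to the radial functions $e^{-itA_s}\dot{\Delta}_k f$ (which preserve $\dot{H}^s$), to supply the extra spatial decay needed to sum the dyadic pieces. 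Everything else is routine.
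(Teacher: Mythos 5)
Your diagnosis of the estimate as the scale--critical $r=\infty$ endpoint at regularity $\dot H^s$ is right, and your $TT^*$/Hardy--Littlewood--Sobolev scheme for the case $q=\tfrac{4s}{d-2s}\ge 2$ (i.e.\ $d=3$, $s>\tfrac34$) is essentially correct. But this is not what the paper does, and the route you sketch breaks down for $d\geq 4$ in a way that your fallback does not repair.

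The paper's actual proof is precisely the ``alternative'' you relegate to a parenthesis: it works with the dyadic pieces $\dot\Delta_j f$ throughout and never touches a $TT^*$ kernel. It first Bernstein-sharpens \eqref{Linfiniteinspace1} to a frequency-localized bound with a negative power of $2^j$, records the trivial $L^\infty_t$ Bernstein bound with a positive power of $2^j$, and Marcinkiewicz-interpolates these two to remove the frequency weight and land in the Lorentz space $L^{\frac{4s}{d-2s},\,1}_t L^\infty_x$ on each dyadic block. Summing by triangle inequality gives a bound in terms of $\sum_j\|\dot\Delta_j f\|_{\dot H^s}$, and a companion weak-type bound gives $\sup_j\|\dot\Delta_j f\|_{\dot H^s}$; a second real interpolation (the Bourgain/Keel--Tao trick for reassembling an $\ell^1$--strong and an $\ell^\infty$--weak bound into an $\ell^2$ one) then produces the $\dot H^s$ norm. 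Nothing in this argument cares whether $q\geq 2$: real interpolation in $L^{q,1}$ and $L^{q,\infty}$ is available for any $q>1$, so the mechanism is uniform in $d$. This is exactly what lets the paper avoid the case split you are forced into.

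Your $d\geq 4$ fallback, by contrast, has a genuine gap. Invoking ``the radial refinement of the Strichartz estimates of \cite{Guo1}'' does not help: as recorded in Lemma \ref{Strichartz}, $\Lambda_d(0)\subset[2,\infty]^2$, and even the level-$s$ admissible family in \eqref{Strichartz5} concerns the Duhamel term and is again restricted to $q\geq 2$; the pair $\bigl(\tfrac{4s}{d-2s},\infty\bigr)$ with $q<2$ is simply not in the quoted statement. And pairing the frequency-localized bound with the radial Strauss inequality of Lemma \ref{Strauss} does not ``supply the extra spatial decay needed to sum the dyadic pieces'': the obstruction when $q<2$ is in the \emph{time} integrability, while Strauss only trades $\dot H^s$ regularity for a weight $|x|^{(d-2s)/2}$ in space and is $t$-uniform. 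Applied to $e^{-itA_s}\dot\Delta_j f$ it just reproduces the Bernstein bound $2^{js}\|\dot\Delta_j f\|_{L^2}$ away from the origin; it gives no new information near $x=0$ and no decay in $t$, so it cannot close the dyadic sum. The missing idea is not radiality or spatial decay but the Lorentz-space real interpolation step, which is precisely how the paper's proof sidesteps the $q<2$ difficulty that your HLS/$TT^*$ argument cannot.
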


\begin{proof}
\eqref{Linfiniteinspace1} and Bernstein's inequality imply that for any $j$
	\begin{equation}
	\label{tobeinterpolated1}
	\|e^{-it(-\Delta)^{s}}\dot{\Delta}_{j}f\|_{L_{t}^{2\sigma}L_{x}^{\infty}}
	\lesssim 2^{j\frac{d\sigma-2s}{2\sigma}}\|\dot{\Delta}_{j}f\|_{L^{2}}
	\sim 2^{j(\frac{d\sigma-2s}{2\sigma}-s)}\|\dot{\Delta}_{j}f\|_{\dot{H}^{s}}.
	\end{equation}
	While using \eqref{tobeinterpolated1} and Bernstein's inequality, we have
	\begin{equation}\label{tobeinterpolated2}
	\|e^{-it(-\Delta)^{s}}\dot{\Delta}_jf\|_{L_{t}^{\infty}L_{x}^{\infty}}
	\lesssim 2^{j\frac{d}{2}}\|\dot{\Delta}_{j}f\|_{L^{2}}
	\sim 2^{j\frac{d-2s}{2}}\|\dot{\Delta}_jf\|_{\dot{H}^{s}}.
    \end{equation}
	Applying Marcinkeiwicz interpolation to \eqref{tobeinterpolated1} and \eqref{tobeinterpolated2} gives the Lorentz space
	estimates
	$$
	\|e^{-it(-\Delta)^{s}}\dot{\Delta}_{j}f\|_{L_{t}^{\frac{4s}{d-2s},1}L_{x}^{\infty}}
	\lesssim \|\dot{\Delta}_{j}f\|_{\dot{H}^{s}},
	$$
	and then using the triangle inequality gets
	\begin{equation}\label{tobeinterpolated3}
	\|e^{-it(-\Delta)^{s}}f\|_{L_{t}^{\frac{4s}{d-2s},1}L_{x}^{\infty}}
	\lesssim \sum_{j\in{\bf{Z}}}\|P_{j}f\|_{\dot{H}^{s}}.
    \end{equation}

	On the other hand it follows from \eqref{tobeinterpolated1} and \eqref{tobeinterpolated2} that
	$$
	\displaystyle\int_{E}\|e^{-it(-\Delta)^{s}}\dot{\Delta}_{j}f\|_{L_{x}^{\infty}}
	\lesssim \min(|E|^{1-\frac{1}{2\sigma}}2^{j(\frac{d\sigma-2s}{2\sigma}-s)},
	|E|2^{\frac{d-2s}{2}j})\|\dot{\Delta}_{j}f\|_{\dot{H}^{s}}
	$$
	for any measurable set $E\subset\mathbb{R}$. Summing this in $j$ gives
	$$
	\displaystyle\int_{E}\|e^{-it(-\Delta)^{s}}f\|_{L_{x}^{\infty}}
	\lesssim |E|^{\frac{6s-d}{4s}}\sup_{j}\|\dot{\Delta}_{j}f\|_{\dot{H}^{s}}
	$$
	and hence we have the weak-type estimate
	$$
	\|e^{-it(-\Delta)^{s}}f\|_{L_{t}^{\frac{4s}{d-2s},\infty}L_{x}^{\infty}}
	\lesssim \sup_{j}\|\dot{\Delta}_{j}f\|_{\dot{H}^{s}}.
	$$
	Interpolating this inequality with \eqref{tobeinterpolated3} gives \eqref{variantdispersive}.
\end{proof}


\section{Scattering Criterion for fractional NLS}

In this section, we generalize the scattering criterion in \cite{Dodson}, originally established in \cite{Tao1} to the fractional NLS. Roughly speaking, for radial solutions with mass supercritical and energy subcritical nonlinearity, the only obstacle to the scattering (more precisely, asymptotic completeness) is the concentration of mass in a ball centered by origin after long time evolution.

Introduce the notation
$$ \|u\|_{X^{\alpha}(I)}:=\sup_{(q,r)\in \Lambda_d(0)}\|D^{\alpha}u\|_{L_I^qL_x^r}.
$$
Note that we say a global solution $u$ to \eqref{equ:equ1.1} by means of the mild solution
$$ u(t)=e^{-it(-\Delta)^s}u_0+i\int_0^te^{-i(t-t')(-\Delta)^s}(|u|^{p-1}u)(t')dt',
$$
with $u\in C(\mathbb{R},H^s(\mathbb{R}^d))\cap X^{s}(I)$ for any finite interval $I\subset\mathbb{R}$.

In the following lemma, we shall extend the above Strichartz estimates
to nonlinear local-in-time forms.

\begin{lemma}\label{localintimelemma}
Let $u$ be a solution to (1.1) satisfying the uniform energy bound
	$$
	\|u\|_{L_{t}^{\infty}H_{x}^{s}}\leq E.
	$$
	Then we have
	\begin{equation}\label{localtime1}
	\|u\|_{L_{t}^{2}W_{x}^{s, \frac{2d}{d-2s}}
		([T, T+\tau]\times {{\bf{R}}^{d}})}\lesssim \langle\tau\rangle^{\frac{1}{2}},
	\end{equation}
    \begin{equation}
    \label{localtime2}
    \|u\|_{L_{t}^{p-1}L_{x}^{\infty}
    	([T, T+\tau]\times {{\bf{R}}^{d}})}\lesssim \langle\tau\rangle^{\frac{1}{p-1}},
    \end{equation}
    \begin{equation}
    \label{localtime3}
    \|u\|_{L_{t}^{\frac{4s}{d-2s}}L_{x}^{\infty}
    	([T, T+\tau]\times {{\bf{R}}^{d}})}\lesssim \langle\tau\rangle^{\frac{d-2s}{4s}},
    \end{equation}
	for all $T\geq 0$ and $\tau>0$.
\end{lemma}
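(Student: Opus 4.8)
The plan is to deduce all three bounds from a single short-interval estimate, proved by a subcritical fixed-point argument, together with a summation over consecutive time intervals.

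First I would reduce matters to the following claim: there is $\ell_{0}=\ell_{0}(E)>0$ such that, for every interval $I$ with $|I|\leq\ell_{0}$,
\[
\sup_{(q,r)\in\Lambda_{d}(0)}\big\|\langle D\rangle^{s}u\big\|_{L_{t}^{q}L_{x}^{r}(I\times\R^{d})}+\big\|u\big\|_{L_{t}^{\frac{4s}{d-2s}}L_{x}^{\infty}(I\times\R^{d})}\lesssim_{E}1 .
\]
Since $(2,\frac{2d}{d-2s})\in\Lambda_{d}(0)$, the left side of \eqref{localtime1} is then $\lesssim_{E}1$ on such an $I$. Granting the claim, split $[T,T+\tau]$ into $N\leq\tau/\ell_{0}+1\lesssim_{E}\langle\tau\rangle$ consecutive intervals of length $\leq\ell_{0}$ and add the $2$-nd powers of the bound over them to get \eqref{localtime1}, and the $\frac{4s}{d-2s}$-th powers to get \eqref{localtime3}; finally \eqref{localtime2} follows from \eqref{localtime3} by Hölder in time, as $p<1+\frac{4s}{d-2s}$ forces $p-1<\frac{4s}{d-2s}$ and hence
\[
\|u\|_{L_{t}^{p-1}L_{x}^{\infty}([T,T+\tau]\times\R^{d})}\leq\tau^{\frac{1}{p-1}-\frac{d-2s}{4s}}\|u\|_{L_{t}^{\frac{4s}{d-2s}}L_{x}^{\infty}([T,T+\tau]\times\R^{d})}\lesssim_{E}\langle\tau\rangle^{\frac{1}{p-1}} .
\]

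For the short-interval claim I would work with the Duhamel equation on $I=[a,b]$,
\[
u(t)=e^{-i(t-a)A_{s}}u(a)+i\int_{a}^{t}e^{-i(t-t')A_{s}}\big(|u|^{p-1}u\big)(t')\,dt' .
\]
First I would bound the regularity-$s$ Strichartz norms $\sup_{(q,r)\in\Lambda_{d}(0)}\|\langle D\rangle^{s}u\|_{L_{t}^{q}L_{x}^{r}(I)}$: the homogeneous part is $\lesssim\|u(a)\|_{H^{s}}\leq E$ by \eqref{Strichartz1} (commuting $\langle D\rangle^{s}$ through the propagator), and the Duhamel part is controlled through \eqref{Strichartz2} once one proves, via the fractional Leibniz rule (Lemma \ref{KPV}), a fractional chain rule for $z\mapsto|z|^{p-1}z$ (available since $p\geq 2$ makes the nonlinearity $C^{1}$), Hölder in $x$ and $t$, and the Sobolev embeddings $H^{s}\hookrightarrow L^{a}$ for $2\leq a\leq\frac{2d}{d-2s}$, an estimate of the form
\[
\big\|\langle D\rangle^{s}(|u|^{p-1}u)\big\|_{L_{t}^{\tilde{q}'}L_{x}^{\tilde{r}'}(I)}\lesssim|I|^{\delta}\Big(E+\sup_{(q,r)\in\Lambda_{d}(0)}\|\langle D\rangle^{s}u\|_{L_{t}^{q}L_{x}^{r}(I)}\Big)^{p}
\]
for a suitable admissible pair $(\tilde{q},\tilde{r})$ and some $\delta=\delta(d,s,p)>0$; the positivity of $\delta$ is exactly the scaling gap $s-s_{c}>0$. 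Since $u\in X^{s}(I)$ is finite on bounded intervals by the definition of solution, a continuity-in-$|I|$ argument then closes the fixed point for $|I|\leq\ell_{0}(E)$. Next I would bound $\|u\|_{L_{t}^{\frac{4s}{d-2s}}L_{x}^{\infty}(I)}$: the homogeneous part by \eqref{variantdispersive} and $\|u(a)\|_{\dot H^{s}}\leq E$; the Duhamel part by removing the retardation with the Christ--Kiselev lemma, then factoring $e^{-itA_{s}}\int_{\R}e^{it'A_{s}}(\cdot)\,dt'$ and applying \eqref{variantdispersive} followed by \eqref{Strichartz4}, which reduces it to $\|\langle D\rangle^{s}(|u|^{p-1}u)\|_{L_{t}^{\tilde{q}'}L_{x}^{\tilde{r}'}(I)}$ for an admissible pair with $\tilde{q}'<\frac{4s}{d-2s}$ — finite by the same Leibniz/Hölder/Sobolev estimates and the bound on the regularity-$s$ norms just obtained (no gain in $|I|$ needed here).

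The hard part will be producing the nonlinear estimate with a genuine gain $|I|^{\delta}$, $\delta>0$: the Strichartz inequalities for $e^{-itA_{s}}$ hold only on the $s$-admissible line and only for radial data, and the elementary dispersive bounds lose derivatives, so there is little freedom in choosing $(\tilde{q},\tilde{r})$ and in distributing the $p$ factors of $u$ among Strichartz, $L_{x}^{\infty}$ and $\dot H^{s}$-Sobolev norms; making all scaling and Hölder relations close at once is where $p<1+\frac{4s}{d-2s}$ together with the lower restriction on $p$ ($p>\frac{8s}{3}$ when $d=3$, $p\geq 2$ when $d\geq 4$) is really used, the latter also guaranteeing the chain rule applies. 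A secondary but genuine point is the Christ--Kiselev step for the $L_{t}^{\frac{4s}{d-2s}}L_{x}^{\infty}$ bound, which forces $\frac{4s}{d-2s}$ to exceed the dual time-exponent of the admissible pair used; making this compatible with the rest is what ties the argument to the hypothesis $s\in(\frac{d}{d+1},1)$.
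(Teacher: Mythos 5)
Your overall strategy matches the paper's: reduce to a short-interval bound obtained from Duhamel, linear Strichartz/dispersive estimates, the fractional Leibniz rule, and a continuity argument, with the gain in $|I|$ coming from $s_c<s$; then sum the contributions of $O(\langle\tau\rangle)$ subintervals. There is, however, one place where you take a heavier and riskier route than the paper. For the inhomogeneous contribution at the non-Strichartz pair $(\tfrac{4s}{d-2s},\infty)$ you invoke Christ--Kiselev and then factor the retarded integral, and you correctly flag that this forces a delicate compatibility between $\tfrac{4s}{d-2s}$ and the dual time exponent. The paper sidesteps this entirely by a cruder but much simpler move: it never treats the Duhamel integral as a retarded operator at the exotic pair. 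Instead it takes the $L_x$-norm inside the time integral, extends the upper limit to the full subinterval, applies Minkowski, and then uses only the \emph{homogeneous} estimates \eqref{Strichartz1}, \eqref{Linfiniteinspace1}, \eqref{variantdispersive}, which reduces the inhomogeneous contribution to
\[
\int_{T}^{T+\tau}\big\| |u|^{p-1}u\big\|_{H^{s}}\,dt' \;\lesssim\; \|u\|_{L_t^{\infty}H_x^{s}}\,\int_{T}^{T+\tau}\|u\|_{L_x^{\infty}}^{p-1}\,dt' \;\lesssim\; \tau^{\,1-\frac{(d-2s)(p-1)}{4s}}\,\|u\|_{L_t^{4s/(d-2s)}L_x^{\infty}}^{\,p-1}\,\|u\|_{L_t^{\infty}H_x^{s}},
\]
so that all three norms are bootstrapped simultaneously through a single $Y$-norm. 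This is cleaner than your CK step and makes the distribution of the $p$ factors among spaces entirely elementary (one $H^{s}$ factor, $p-1$ factors of $L_x^{\infty}$), so the constraint matching you are worried about never arises. Your derivation of \eqref{localtime2} from \eqref{localtime3} by H\"older in $t$ is perfectly valid and is a legitimate minor variant; the paper instead bounds $L_t^{p-1}L_x^{\infty}$ directly via \eqref{Linfiniteinspace1} with $2\sigma=p-1$, again as part of the $Y$-norm. Net: your plan would close, but the Christ--Kiselev detour is unnecessary and the Minkowski reduction to an $L^1_tH^s_x$ bound is the crucial simplification you should adopt.
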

\begin{proof}It is sufficient to consider $\tau$ sufficiently small
depending on $E$ because the result for larger times span $\tau$ then
follows by decomposing the time interval into smaller pieces. To prove
\eqref{localtime1}, \eqref{localtime2} and \eqref{localtime3}, we define
the local in time norm $$
\|u\|_{Y}:= \|u\|_{L_{t}^{2}W_{x}^{s, \frac{2d}{d-2s}}
	([T, T+\tau]\times {{\bf{R}}^{d}})}+
\|u\|_{L_{t}^{p-1}L_{x}^{\infty}
	([T, T+\tau]\times {{\bf{R}}^{d}})}+
\|u\|_{L_{t}^{\frac{4s}{d-2s}}L_{x}^{\infty}
	([T, T+\tau]\times {{\bf{R}}^{d}})}.
$$

Using Strichartz estimates Lemma \ref{Strichartz}, \eqref{Linfiniteinspace1}, \eqref{variantdispersive} and the Duhamel formula
$$
u(t)=e^{-i(t-T)(-\Delta)^{s}}u(T)
-i\displaystyle\int_{T}^{t}e^{-i(t-t')(-\Delta)^{s}}(|u(t')|^{p-1}u(t'))dt'
$$
yield
$$
\|u\|_{Y}\lesssim \|u(T)\|_{H^{s}}+\displaystyle\int_{T}^{T+\tau}\||u|^{p-1}u\|_{H^{s}}dt'.
$$
Now we apply Leibnitz rule and H\"{o}lder inequality to get
$$
\|u\|_{Y}\lesssim 1+\tau^{1-\frac{(d-2s)(p-1)}{4s}}\|u\|_{L_{t}^{\frac{4s}{d-2s}}L_{x}^{\infty}
	([T, T+\tau]\times {{\bf{R}}^{d}})}^{p-1}
\|u\|_{L_{t}^{\infty}H_{x}^{s}([T, T+\tau]\times {{\bf{R}}^{d}})}.
$$
Hence we have
$$
\|u\|_{Y}\lesssim 1+\tau^{1-\frac{(d-2s)(p-1)}{4s}}\|u\|_{Y}^{p-1}.
$$
As $2\leq p\leq 1+\frac{4s}{d-2s}$, the desired result holds by standard
continuing argument if $\tau$ is sufficiently small.
\end{proof}

Now we follow the strategy in \cite{Tao2} to establish a scattering criterion for radial solution to \eqref{equ:equ1.1}.

\begin{lemma}\label{scacri}
	Assume that $u$ is a radial solution to (1.1) satisfying
	$$
	\|u\|_{L_{t}^{\infty}H_{x}^{s}}\leq E.
	$$
	Suppose that there exist $\varepsilon=\varepsilon_E>0$ and $R=R_E>0$ such that if
	\begin{equation}
	\label{equ:masslim}
	\liminf_{t\rightarrow\infty}\displaystyle\int_{|x|\leq R}|u(t,x)|^{2}dx
	\leq\varepsilon^{2},
	\end{equation}
	then $u$ scatters forward in time.
\end{lemma}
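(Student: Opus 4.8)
The plan is to adapt Tao's scattering criterion \cite{Tao2}, as used in \cite{Dodson}, to the fractional flow. It suffices to produce a time $T>0$ with $\|u\|_{X^s([T,\infty))}<\infty$: then $\|u\|_{X^s([0,T])}<\infty$ by Lemma \ref{localintimelemma}, a finite global $X^s$ norm puts $|u|^{p-1}u$ into a dual Strichartz space (Lemma \ref{Strichartz}, Lemma \ref{KPV}), and hence $\int_t^\infty e^{it'A_s}(|u|^{p-1}u)\,dt'$ converges in $H^s$ as $t\to\infty$, which is forward scattering. In turn, by the energy bound $\|u\|_{L^\infty_tH^s_x}\le E$, Lemma \ref{localintimelemma}, and a standard continuity argument, $\|u\|_{X^s([T,\infty))}<\infty$ follows once
\[
\|e^{-i(t-T)A_s}u(T)\|_{\mathcal S([T,\infty))}\le\delta
\]
for a single $T$, where $\mathcal S$ is a Strichartz norm through which smallness propagates in the Duhamel iteration (say $L^{4s/(d-2s)}_tL^\infty_x$ or $L^{p-1}_tL^\infty_x$) and $\delta=\delta(E)$ is the corresponding small-data threshold. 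So the whole problem is to find such a $T$.

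Expand $u(T)$ back to $u_0$ by Duhamel and split the history at a large constant span $T_0=T_0(E)$, to be fixed below:
\[
e^{-i(t-T)A_s}u(T)=e^{-itA_s}u_0-i\!\int_0^{T-T_0}\!\!e^{-i(t-t')A_s}F(u(t'))\,dt'-i\!\int_{T-T_0}^{T}\!\!e^{-i(t-t')A_s}F(u(t'))\,dt',
\]
with $F(u)=|u|^{p-1}u$; call the three terms $\mathrm L$, $\mathcal I_1$ (distant past), $\mathcal I_2$ (recent past). For $\mathrm L$: the hypotheses give $p-1<\tfrac{4s}{d-2s}$, so by \eqref{Linfiniteinspace1} and \eqref{variantdispersive} the free evolution $e^{-itA_s}u_0$ lies in $\mathcal S(\R)$ with norm $\lesssim\|u_0\|_{H^s}$, whence $\|\mathrm L\|_{\mathcal S([T,\infty))}\to0$ as $T\to\infty$ by dominated convergence.

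For $\mathcal I_1$, use that $|t-t'|\ge T_0$ when $t\ge T$ and $t'\le T-T_0$, and apply the dispersive decay \eqref{usualdispersive}. The new feature compared to $s=1$ is the derivative loss in \eqref{usualdispersive}: the operator $(-\Delta)^{\frac{d(1-s)}{2}(1-\frac2r)}$ must be pushed onto $F(u(t'))$ via the fractional Leibniz rule (Lemma \ref{KPV}), and the resulting quantity controlled by combining mass conservation $\|u(t')\|_{L^2}=\|u_0\|_{L^2}$, the energy bound, and the radial Strauss estimate (Lemma \ref{Strauss}) $|u(t',x)|\lesssim E|x|^{-(d-2s)/2}$ for the spatial tails. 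Choosing the exponents so that the integrand decays like $|t-t'|^{-\theta}$ with $\theta>1$ yields $\|\mathcal I_1\|_{\mathcal S([T,\infty))}\le C(E)T_0^{-\kappa}$ for some $\kappa>0$, uniformly in $T$, so that $T_0=T_0(E)$ large makes this $\le\delta/3$. It is exactly this balancing — the nonlinearity must be regular enough to carry the fractional derivative and must decay fast enough at spatial infinity — that forces the technical lower bound on $p$ (and limits the range to $d\le6$).

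For $\mathcal I_2$, hypothesis \eqref{equ:masslim} finally enters. Fix a smooth radial cutoff $\chi_R$ with $\chi_R\equiv1$ on $|x|\le R$ and supported in $|x|\le2R$, and decompose $u=\chi_Ru+(1-\chi_R)u$ inside $F(u)$ over the fixed-length interval $[T-T_0,T]$. Every term carrying a factor $(1-\chi_R)u$ is $\le C(E)R^{-\kappa'}$ in the relevant dual Strichartz norm, by Lemma \ref{Strauss}. For the purely interior term $(\chi_Ru)^p$ we use that $t\mapsto\int\chi_R|u(t)|^2\,dx$ is almost conserved: from the equation $\tfrac{d}{dt}\int\chi_R|u|^2\,dx=\Im\int([(-\Delta)^s,\chi_R]\bar u)\,u\,dx=O_E(R^{-\kappa''})$ by commutator estimates for $(-\Delta)^s$, so choosing $T$ with $\int_{|x|\le R}|u(T)|^2\le\varepsilon^2$ forces $\int\chi_R|u(t')|^2\lesssim\varepsilon^2+T_0R^{-\kappa''}$ throughout $[T-T_0,T]$; interpolating this $L^2$-smallness against the uniform $\dot H^s$ bound (and Sobolev embedding into $L^{2d/(d-2s)}$) controls $(\chi_Ru)^p$ by $C(E)T_0^{c}\varepsilon^{\kappa'''}$, so that $\|\mathcal I_2\|_{\mathcal S([T,\infty))}\le C(E)T_0^{c}(\varepsilon^{\kappa'''}+R^{-\kappa'})$. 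Now fix the constants in order: first $T_0=T_0(E)$ large; then $\varepsilon=\varepsilon_E$ small and $R=R_E$ large so that both $C(E)T_0^{c}(\varepsilon^{\kappa'''}+R^{-\kappa'})$ and the slack $T_0R^{-\kappa''}$ are $\le\delta/3$. With these $\varepsilon_E,R_E$, \eqref{equ:masslim} lets us take $T$ as large as we like with $\int_{|x|\le R}|u(T)|^2\le\varepsilon^2$, which also makes $\|\mathrm L\|_{\mathcal S([T,\infty))}\le\delta/3$; summing the three contributions gives the required smallness and, by the first paragraph, scattering. I expect $\mathcal I_1$ to be the crux — playing the derivative loss in the fractional dispersive inequality against the radial decay of $|u|^{p-1}u$ — while the only other genuinely new input is the commutator bound for $[(-\Delta)^s,\chi_R]$; the remainder is a routine transcription of \cite{Dodson,Tao2}.
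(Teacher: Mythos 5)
Your proposal follows essentially the same route as the paper's proof: reduce to smallness of $e^{-i(t-T)A_s}u(T)$ in a scattering-critical Strichartz norm via a continuity argument, split Duhamel into the free evolution of $u_0$ plus a recent-past piece and a distant-past piece, handle the distant-past integral by the dispersive estimate \eqref{usualdispersive} with its $(-\Delta)^{\frac{d(1-s)}{2}(1-\frac2r)}$ loss pushed onto $|u|^{p-1}u$ via Lemma \ref{KPV}, and handle the recent-past piece by propagating the mass non-concentration hypothesis through a commutator bound for $[(-\Delta)^s,\chi_R]$ combined with the radial Strauss estimate. The only difference is organizational: you split the Duhamel history at a fixed scale $T_0=T_0(E)$ and then pick $\varepsilon_E,R_E$ afterward, whereas the paper takes the recent-past window of length $\varepsilon^{-\alpha}$ so both pieces contribute a positive power of $\varepsilon$; the two parametrizations are interchangeable here since $\varepsilon_E,R_E$ are permitted to depend on $E$.
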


\begin{proof}
	Suppose that $0<\varepsilon<1$ and $R>1$ which will be chosen later. Note that the admissible pair $(q,r)=\left(2(p-1),\frac{d(p-1)}{s}\right)\in \Lambda_{d}(s_c)$, and it will be sufficient to prove the scattering once we show that
$$  \|u\|
_{L_{t}^{2(p-1)}L_{x}^{\frac{d(p-1)}{s}}([T,\infty)\times{{\bf R}^{d}})}<\infty
$$
for some large $T>0$.

We then claim that this estimate can be reduced to
$$ \|e^{-i(t-T)(-\Delta)^{s}}u(T)\|_{L_t^{2(p-1)}L_x^{\frac{d(p-1)}{s}}([T,\infty)\times\mathbb{R}^d)}<\epsilon
$$
for $T>0$ large enough.

Indeed, this is a consequence of the Strichartz inequality, Sobolev embedding, fractional chain rule and, H\"{o}lder inequality, and continuity argument. The key observation is that for any interval $I=[t_1,t_2]$,
\begin{equation*}
\begin{split}
&\left\|\int_{t_1}^{t}e^{-i(t-t')(-\Delta)^s}(|u|^{p-1}u)(t')dt'\right\|_{L_t^{2(p-1)}L_x^{\frac{d(p-1)}{s}}(I\times\mathbb{R}^d)} \\ &\lesssim
\||D_x|^{s_c}(|u|^{p-1}u)\|_{L_t^{2(p-1)}L_x^{\frac{2d}{d+2s}}(I\times\mathbb{R}^d)}\\
&\lesssim \|u\|_{L_t^{2(p-1)}L_x^{\frac{d(p-1)}{s}}(I\times\mathbb{R}^d)}\|u\|_{L_t^{\infty}\dot{H}^{s_c}(I\times\mathbb{R}^d)}.
\end{split}
\end{equation*}

 First note that Sobolev embedding, Strichartz estimates, monotone convergence theorem yield
\begin{equation}
\label{2.28}
\|e^{-it(-\Delta)^{s}}u_{0}\|
_{L_{t}^{2(p-1)}L_{x}^{\frac{d(p-1)}{s}}([T_{0},\infty)\times{{\bf R}^{d}})}
<\varepsilon.
\end{equation}
for $T_0$ sufficiently large.

Using the non-concentration assumption \eqref{equ:masslim} and choosing $T>T_{0}>\varepsilon^{-\frac{2s(p+1)-d(p-1)}{2s}}$,
we have
\begin{equation}
\label{2.29}
\displaystyle\int_{{\bf R}^{d}}\chi_{R}(x)|u(T,x)|^{2}dx\leq\varepsilon^{2},
\end{equation}
where $\chi_R(x)=\chi(R^{-1}x)$, with $\chi\in C_c^{\infty}(\mathbb{R}^3)$, radial and $\chi\equiv 1$ when $|x|\leq 1$.

Now we write
\begin{equation*}
\begin{split}
&e^{-i(t-T)(-\Delta)^{s}}u(T)=e^{-it(-\Delta)^{s}}u_0+F_1(t)+F_2(t),\\
&F_j(t)=i\int_{I_j}e^{-i(t-t')(-\Delta)^{s}}(|u|^{p-1}u)(t')dt',\;j=1,2,
\end{split}
\end{equation*}
$\textrm{with }I_1=[T-\epsilon^{-\alpha},T],I_2=[0,T-\epsilon^{-\alpha}],\textrm{ with a parameter }\alpha \textrm{ to be chosen later}.
$

To ensure the smallness of the term $e^{-i(t-T)(-\Delta)^{s}}u(T)$, it remains to estimate $F_1$ and $F_2$ separately. $F_1$ represents the part which evolutes long enough, and it will be reasonable to apply \eqref{equ:masslim}. While for the short time part $F_2$, we will utilize the dispersive estimate \eqref{usualdispersive}.

{\bf Estimate of $F_1$: }

Utilizing Strichartz estimates, H\"{o}lder inequality, \eqref{localtime1}, \eqref{localtime2} and the fact that $s>s_c$
gives
\begin{equation}\label{F1-1}
\begin{split}
\|F_{1}\|_{L_{t}^{2(p-1)}L_{x}^{\frac{d(p-1)}{s}}([T,\infty)\times {\bf R}^{d})}
&\lesssim \displaystyle\int_{I_{1}}\||u|^{p-1}u\|_{H_{x}^{s}}ds \nonumber\\
&\lesssim \|u\|_{L_{t}^{2}W_{x}^{s,\frac{2d}{d-2s}}}
\|u\|_{L_{t}^{p-1}L_{x}^{\infty}}^{\frac{p-1}{2}}
\|u\|_{L_{t}^{\infty}L_{x}^{\frac{d(p-1)}{2s}}}^{\frac{p-1}{2}}\\
&\lesssim  |I_{1}|\|u\|_{L_t^{\infty}L_x^{\frac{d(p-1)}{2s}}(I_1\times\mathbb{R}^d)}^{\frac{p-1}{2}},
\end{split}
\end{equation}
where the three space-time norms in the second line are over $I_{1}\times {\bf R}^{d}$. We can choose $\alpha<\frac{2s(p+1)-d(p-1)}{8s}$, thanks to $p<1+\frac{4s}{d-2s}$.

In order to control $\|u\|_{L_t^{\infty}L_x^{\frac{d(p-1)}{2s}}(I_1\times\mathbb{R}^d)}$, we need estimate the time-derivative
\begin{equation*}
\begin{split}
&\left|\partial_t\int \chi_R(x)|u(t,x)|^2dx\right|\\=&2\Im\int \chi_R(x)\ov{u}(-\Delta)^s udx\\
=&2\Im \int ((-\Delta)^{s/2}(\chi_R)\ov{u})
(-\Delta)^{s/2}udx\\
+&2\Im \int \left((-\Delta)^{s/2}(\chi_R \ov{u})-\chi_R (-\Delta)^{s/2}\ov{u}-\ov{u}(-\Delta)^{s/2}\chi_R\right)\cdot (-\Delta)^{s/2}udx.
\end{split}
\end{equation*}

Applying Lemma \ref{KPV} and Sobolev imbedding with $p=2,p_1=\frac{6}{s}$, we finally have
$$ \left|\partial_t\int \chi_R(x)|u(t,x)|^2dx\right|\lesssim R^{-s/2}.
$$

Therefore,
$$
\|\chi_Ru\|_{L_{I_1}^{\infty}L_x^2}\lesssim \epsilon +R^{-\frac{s}{2}}\epsilon^{-\alpha}\lesssim \epsilon,
$$
by taking $R\gg \epsilon^{-\frac{4\alpha}{s}}$.

Using interpolation, Sobolev embedding, Lemma \ref{Strauss} and choosing $R$ such that
$R^{-\frac{2s(d-2s)}{d(p-1)}}\ll \epsilon^{\frac{2s(p+1)-d(p-1)}{2s(p-1)}}$,
we find that
\begin{eqnarray}
\|u\|_{L_{t}^{\infty}L_{x}^{\frac{d(p-1)}{2s}}}
&\lesssim& \|\chi_{R}u\|_{L_{t}^{\infty}L_{x}^{2}}^{1-\theta}
\|u\|_{L_{t}^{\infty}L_{x}^{\frac{2d}{d-2s}}}^{\theta}
+\|(1-\chi_{R})u\|_{L_{t,x}^{\infty}}^{1-\beta}
\|u\|_{L_{t}^{\infty}L_{x}^{2}}^{\beta} \nonumber\\
&\lesssim& \varepsilon^{\frac{2s(p+1)-d(p-1)}{2s(p-1)}}
+R^{-\frac{2s(d-2s)}{d(p-1)}}
\lesssim \varepsilon^{\frac{2s(p+1)-d(p-1)}{2s(p-1)}}
\end{eqnarray}
where $\theta=\frac{d(p-1)-4s}{2s(p-1)}$, $\beta=1-\frac{4s}{d(p-1)}$
and all space-time norms are over $I_{1}\times {\bf R}^{N}$.

In summary, we have obtained that for some $\gamma>0$, we have
\begin{equation}\label{F1-2}
\|F_{1}\|_{L_{t}^{2(p-1)}L_{x}^{\frac{d(p-1)}{s}}([T,\infty)\times {\bf R}^{d})} \lesssim \epsilon^{\gamma}.
\end{equation}

{\bf Estimate of $F_2$: }

Now we turn to $F_{2}$ and use interpolation to get
\begin{equation}\label{F2interpolation1}
\|F_{2}\|_{L_{t}^{2(p-1)}L_{x}^{\frac{d(p-1)}{s}}([T,\infty)\times {\bf R}^{d})}
\lesssim
\|F_{2}\|_{L_{t}^{2(p-1)}L_{x}^{\frac{2d(p-1)}{d(p-1)-2s}}([T,\infty)\times {\bf R}^{d})}^{\theta}
\|F_{2}\|_{L_{t}^{4\sigma}L_{x}^{\frac{2}{\delta}}([T,\infty)\times {\bf R}^{d})}^{1-\theta},
\end{equation}
where $\theta=\frac{2s-d(p-1)\delta}{d(p-1)(1-\delta)-2s}$ and $\delta \ll 1$.

As
$$
 F_2(t)=e^{-i(t-T+\epsilon^{-\alpha})(-\Delta)^s}(u(T-\epsilon^{-\alpha})-u_0).
$$
and $\left(2(p-1),\frac{2d(p-1)}{d(p-1)-2s}\right)\in \Lambda_{d}(0)$, we can use Strichartz estimates to get
\begin{equation}\label{F2interpolation2}
\|F_{2}\|_{L_{t}^{2(p-1)}L_{x}^{\frac{2d(p-1)}{d(p-1)-2s}}([T,\infty)\times {\bf R}^{d})}\lesssim 1.
\end{equation}
From the dispersive estimate \eqref{usualdispersive} and Leibnitz rule, we have
\begin{equation}\label{F2-1}
\begin{split}
\|F_{2}\|_{L_{x}^{\frac{2}{\delta}}}
&\lesssim  \displaystyle\int_{I_{2}}|t-s|^{-\frac{d(1-\delta)}{2}}
\||(-\Delta)^{\frac{d(1-s)(1-\delta)}{2}}(u|^{p-1}u)\|_{L_{x}^{\frac{2}{2-\delta}}}ds \\
&\lesssim  \displaystyle\int_{I_{2}}|t-s|^{-\frac{d(1-\delta)}{2}}
\|u\|_{L_{x}^{(p-1)\tilde{p}}}^{p-1}
\||(-\Delta)^{\frac{d(1-s)(1-\delta)}{2}}u\|_{L_{x}^{\tilde{q}}}ds,
\end{split}
\end{equation}
where $\tilde{q}=\frac{2d}{d-2s+2d(1-s)(1-\delta)}$,
$\tilde{p}=\frac{2d}{d+2s-2d(1-s)(1-\delta)-d\delta}$ when $d=3$ and
$\tilde{q}=2$, $\tilde{p}=\frac{2}{1-\delta}$ when $d\geq 4$. As $\frac{d}{d+1}\leq s<1$,
and $\frac{8s-3}{6}<\frac{p-1}{2}<\frac{2s}{3-2s}$ when $d=3$
and $\frac{1}{2}\leq\frac{p-1}{2}<\frac{2s}{N-2s}$, by Sobolev embedding, we find
$$
\|F_{2}\|_{L_{x}^{\frac{2}{\delta}}}
\lesssim \displaystyle\int_{I_{2}}|t-s|^{-\frac{d(1-\delta)}{2}}\|u\|_{H_{x}^{s}}^{p}ds
\lesssim (t-T+\epsilon^{-\alpha})^{-\frac{d(1-\delta)}{2}+1}.
$$
Therefore,
$$
\|F_{2}\|_{L_{t}^{2(p-1)}L_{x}^{\frac{2}{\delta}}([T,\infty)\times {\bf R}^{d})}
\lesssim \varepsilon^{\gamma_1},
$$
for some $\gamma_1>0$.
Combing with \eqref{F2interpolation1}and \eqref{F2interpolation2} gives
\begin{equation}\label{F2-2}
\|F_{2}\|_{L_{t}^{2(p-1)}L_{x}^{\frac{d(p-1)}{s}}([T,\infty)\times {\bf R}^{d})}
\lesssim \varepsilon^{c(d,p,\gamma_1)}
\end{equation}
with $c(d,p,\gamma_1)>0$.

Putting \eqref{2.28}, \eqref{F1-2}and \eqref{F2-2} together yields
$$
\|e^{i(t-T)(-\Delta)^{s}}u(T)\|
_{L_{t}^{2(p-1)}L_{x}^{\frac{d(p-1)}{s}}([T,\infty)\times {\bf R}^{N})}
\lesssim \epsilon^{c(d,p,\gamma,\gamma_1)}
$$
with $c(d,p,\gamma,\gamma_1)>0$.

This completes the proof.

\end{proof}

\begin{remark}
	The criterion is not sufficient to ensure scattering in the mass-critical case. We take mass critical NLS for an example, namely $s=1$ and $p=1+\frac4d$ in \eqref{equ:equ1.1}. For any $\epsilon>0,R>0$, we take
	$\lambda>0$ small enough and consider a solitary wave solution
	$$ u_{\lambda}(t,x)=\lambda^{\frac{d}{2}}e^{it}Q(\lambda x).
	$$
	One easily check that
	$$ \|u_{\lambda}\|_{L_{x}^{\infty}H_x^1}\leq C,\textrm{ independent of $\lambda$},
	$$
	and
	$$ \int_{|x|\leq R}|u_{\lambda}(t,x)|^2dx=\int_{|x|\leq\lambda R}|Q|^2dx\rightarrow 0,\lambda\rightarrow 0.	$$
	However, for any $\lambda>0$, $u_{\lambda}(t,x)$ does not scatter.
\end{remark}


\section{Morawetz Estimates}

In this section, we will use the virial identity established in \cite{Lenzmann} to prove Morawetz estimate. The key ingredient is to use the Balakrishnan's formula for the fractional Laplacian $(-\Delta)^s$ for $s\in (0,1)$:
\begin{equation}\label{Bformula}
(-\Delta)^s=\frac{\sin(\pi s)}{\pi}\int_0^{\infty}\lambda^{s-1}(-\Delta)(-\Delta+\lambda)^{-1}d\lambda.
\end{equation}

 From Plancherel, one easily checks that
\begin{equation}\label{hsnorm}
 s\|(-\Delta)^{s/2}u\|_{L^2(\mathbb{R}^d)}^2=\int_0^{\infty}\lambda^sd\lambda\int_{\mathbb{R}^d}|\nabla u_{\lambda}|^2dx
\end{equation}
with the notation $u_{\lambda}=\sqrt{\frac{\sin(\pi s)}{\pi}}(\lambda-\Delta)^{-1}u$ here and in the sequel.
We remark that throughout this section, we work for any dimension $d$ and any nonlinearity so that $s_c=\frac{d}{2}-\frac{2s}{p-1}\in (0,s)$.

\subsection{Virial Identity}
The key ingredient is the following virial identity: For reasonable radial function $\varphi=\varphi(x)$, we have
\begin{lemma}[\cite{Lenzmann}]\label{virial}
	Suppose $u$ solves \eqref{equ:equ1.1}. Then for $$M_{\varphi}(t)=2\Im\int_{\mathbb{R}^d} \ov{u}\nabla u\cdot\nabla \varphi(x)dx,$$ we have
	\begin{equation}\label{virial1}
	\begin{split}
	\frac{dM_{\varphi}}{dt}=&\int_{0}^{\infty}\lambda^{s}d\lambda\int_{\mathbb{R}^d} \left(4\partial_j\partial_k \varphi\partial_j\ov{u_{\lambda}}\partial_ku_{\lambda}-\Delta^2\varphi |u_{\lambda}|^2\right)dx\\
	&-\frac{2(p-1)}{p+1}\int_{\mathbb{R}^d}\Delta\varphi |u|^{p+1}dx,
	\end{split}
	\end{equation}
	where $u_{\lambda}=\sqrt{\frac{\sin(\pi s)}{\pi}}(\lambda-\Delta)^{-1}u$.
\end{lemma}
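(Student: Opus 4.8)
The plan is to establish the virial identity \eqref{virial1} by differentiating $M_\varphi(t)$ in time and using the equation \eqref{equ:equ1.1}, with the Balakrishnan formula \eqref{Bformula} converting the nonlocal operator $(-\Delta)^s$ into an average of the local resolvents $(-\Delta+\lambda)^{-1}$ so that only classical integration-by-parts identities for the Laplacian are needed. First I would compute $\frac{d}{dt}M_\varphi(t) = 2\,\Im\int \big(\partial_t\ov u\,\nabla u + \ov u\,\nabla\partial_t u\big)\cdot\nabla\varphi\,dx$, substitute $i\partial_t u = (-\Delta)^s u - |u|^{p-1}u$ (so $\partial_t u = -i(-\Delta)^s u + i|u|^{p-1}u$ and $\partial_t\ov u = i(-\Delta)^s\ov u - i|u|^{p-1}\ov u$), and split the resulting expression into the dispersive contribution coming from $(-\Delta)^s$ and the nonlinear contribution coming from $|u|^{p-1}u$.

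For the nonlinear part, the terms involving $|u|^{p-1}u$ are local, and the standard manipulation
$\Im\int\big(|u|^{p-1}\ov u\,\nabla u - \ov u\,\nabla(|u|^{p-1}u)\big)\cdot\nabla\varphi\,dx$
collapses — after writing $|u|^{p-1}\ov u\nabla u + |u|^{p-1}u\nabla\ov u = \frac{1}{p+1}\nabla|u|^{p+1}\cdot\frac{2}{1}$ up to the right constant — to a single term $-\tfrac{2(p-1)}{p+1}\int\Delta\varphi\,|u|^{p+1}\,dx$ after one integration by parts; this is exactly the second line of \eqref{virial1}. For the dispersive part, I would insert \eqref{Bformula} to rewrite $2\,\Im\int\big((-\Delta)^s\ov u\,\nabla u + \ov u\,\nabla(-\Delta)^s u\big)\cdot\nabla\varphi\,dx$ as $\int_0^\infty\lambda^s\,d\lambda$ of the corresponding bilinear expression in $u_\lambda$; here one uses that $(-\Delta+\lambda)^{-1}$ is self-adjoint and commutes with $\nabla$, together with the resolvent identity $(-\Delta)(-\Delta+\lambda)^{-1} = I - \lambda(-\Delta+\lambda)^{-1}$, so that the $\lambda$-weighted integrand becomes a genuine local Morawetz density in $u_\lambda$. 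Then the classical computation for the Laplacian — i.e. the identity $\frac{d}{dt}\,2\Im\int\ov v\,\nabla v\cdot\nabla\varphi\,dx = \int\big(4\partial_j\partial_k\varphi\,\partial_j\ov v\,\partial_k v - \Delta^2\varphi\,|v|^2\big)dx$ for the free/linear Schrödinger-type flow applied to $v = u_\lambda$ — yields the first line of \eqref{virial1}; two integrations by parts move derivatives off $\varphi$ and produce the Hessian term $4\partial_j\partial_k\varphi\,\partial_j\ov{u_\lambda}\partial_k u_\lambda$ and the bi-Laplacian term $-\Delta^2\varphi\,|u_\lambda|^2$.

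The main obstacle is rigor rather than formal algebra: one must justify interchanging the time derivative, the $x$-integral, and the $\lambda$-integral in \eqref{Bformula}, and control the $\lambda\to 0$ and $\lambda\to\infty$ endpoints of $\int_0^\infty\lambda^s\,d\lambda$ using \eqref{hsnorm} and the mapping properties of $(\lambda-\Delta)^{-1}$ on the relevant Sobolev spaces, so that all the integrals converge for $u\in H^s$ radial (and $\varphi$ with bounded $\nabla\varphi,\,\nabla^2\varphi,\,\Delta^2\varphi$). This is precisely why the statement is phrased for "reasonable" radial $\varphi$ and why the authors relegate "the formal derivation of virial identity" to an appendix. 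I would carry out the computation formally first to pin down the constants, then return to verify the integrability and commutation steps, noting that for the subsequent Morawetz application one only needs \eqref{virial1} for a specific well-chosen truncated weight $\varphi$, which makes the endpoint analysis tractable.
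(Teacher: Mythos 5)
Your outline is the same as the derivation the paper carries out in the appendix: differentiate $M_\varphi$ in time, substitute the equation, treat the local nonlinear contribution separately (it collapses after one integration by parts to $-\tfrac{2(p-1)}{p+1}\int\Delta\varphi\,|u|^{p+1}$), and convert the dispersive contribution to a $\lambda$-integral of local quantities in $u_\lambda$ via Balakrishnan's formula \eqref{Bformula}, finishing with standard integrations by parts.

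One clause, however, would mislead you if taken literally: you invoke ``the identity $\tfrac{d}{dt}\,2\Im\int\ov v\,\nabla v\cdot\nabla\varphi\,dx = \int(4\partial_j\partial_k\varphi\,\partial_j\ov v\,\partial_k v - \Delta^2\varphi\,|v|^2)dx$ for the free Schr\"odinger-type flow applied to $v=u_\lambda$.'' That cannot be what happens: for a fixed $\lambda$, $u_\lambda=\sqrt{c_s}(-\Delta+\lambda)^{-1}u$ does not satisfy any closed Schr\"odinger-type evolution (its time derivative drags in the full $(-\Delta)^s$ and the nonlinearity), and in any case $M_\varphi$ is built from $u$, not $u_\lambda$. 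The correct bookkeeping --- and the one nontrivial point your outline glosses over --- is that the time derivative is taken \emph{once}, on $u$; one then substitutes $(-\Delta)^s\ov u=\sqrt{c_s}\int_0^\infty\lambda^{s-1}(-\Delta)\ov{u_\lambda}\,d\lambda$, and inside the resulting spatial integrals re-expresses the remaining factor $u$ as $c_s^{-1/2}(-\Delta+\lambda)u_\lambda$. The $-\Delta u_\lambda$ part of this re-expression produces $\lambda^{s-1}$-weighted pieces such as $\int|\Delta u_\lambda|^2\Delta\varphi$, which cancel among themselves across the three integrations by parts (equations (1)--(3) of the appendix), while the $+\lambda u_\lambda$ part contributes the extra factor of $\lambda$ that promotes $\lambda^{s-1}\,d\lambda$ to $\lambda^s\,d\lambda$. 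This is precisely why \eqref{virial1} ends up with the weight $\lambda^s$ even though Balakrishnan's formula only gives $\lambda^{s-1}$; a proof that simply ``applies the Laplacian virial identity to $u_\lambda$'' would never see that cancellation and would not close.
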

For the convenience of the reader, we will review the proof in appendix.

Next we prove some commutator type estimates, which will be useful in the following subsections. We acknowledge that these types of proofs are similar as in \cite{Lenzmann}, with a different manner to fit our need.
\begin{lemma}\label{commutator}
	For $s>\frac{1}{2}$, we have
	$$ \left|\int_0^{\infty}\lambda^sd\lambda\int_{\mathbb{R}^d} \chi_R\Delta \chi_R |u_{\lambda}|^2dx\right|\lesssim \frac{1}{R^{2s}}.
	$$
\end{lemma}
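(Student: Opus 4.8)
The plan is to exploit the Balakrishnan representation \eqref{hsnorm} to convert the quantity $\int_0^\infty \lambda^s d\lambda\int_{\mathbb{R}^d}\chi_R\Delta\chi_R|u_\lambda|^2\,dx$ into something controlled by $\|u\|_{\dot H^{1/2}}^2$ (or $\|u\|_{L^2}^2$), with the correct power of $R$ extracted from the scaling of $\chi_R$. First I would record the pointwise bound $|\chi_R\Delta\chi_R|\lesssim R^{-2}\mathbf{1}_{|x|\lesssim R}$, which follows since $\chi_R(x)=\chi(x/R)$ and hence $\Delta\chi_R=R^{-2}(\Delta\chi)(x/R)$, with $\nabla\chi,\Delta\chi$ bounded and compactly supported. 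So up to constants it suffices to bound
\begin{equation*}
I_R:=\int_0^\infty \lambda^s\,d\lambda \int_{|x|\lesssim R} R^{-2}|u_\lambda(x)|^2\,dx.
\end{equation*}

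The key point is to split the $\lambda$-integral at the natural frequency scale $\lambda\sim R^{-2}$ and estimate $u_\lambda=\sqrt{\tfrac{\sin(\pi s)}{\pi}}(\lambda-\Delta)^{-1}u$ differently in the two regimes. For \emph{small} $\lambda$ (say $\lambda\le R^{-2}$), I would discard the localization and use the Plancherel bound $\|u_\lambda\|_{L^2}^2 \lesssim \int |\hat u(\xi)|^2(\lambda+|\xi|^2)^{-2}\,d\xi\lesssim \lambda^{-1}\| |\xi|^{-1/2}\hat u\|_{L^2}^2$ or, more simply, the bound $\|\nabla u_\lambda\|_{L^2}^2\lesssim \lambda^{-1}\|u\|_{L^2}^2$ paired with a Hardy/Poincaré-type inequality on the ball $\{|x|\lesssim R\}$, namely $\int_{|x|\lesssim R}|u_\lambda|^2\,dx\lesssim R^2\|\nabla u_\lambda\|_{L^2}^2$; this gives a contribution $\lesssim \int_0^{R^{-2}}\lambda^s\cdot R^{-2}\cdot R^2\cdot\lambda^{-1}\,d\lambda\cdot\|u\|_{L^2}^2 \sim R^{-2s}\|u\|_{L^2}^2$ since $\int_0^{R^{-2}}\lambda^{s-1}\,d\lambda\sim R^{-2s}$ (here $s>0$ suffices). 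For \emph{large} $\lambda$ ($\lambda\ge R^{-2}$) I would instead use the trivial $L^2$ bound $\|u_\lambda\|_{L^2}\lesssim \lambda^{-1}\|u\|_{L^2}$, giving $\lesssim R^{-2}\int_{R^{-2}}^\infty \lambda^s\lambda^{-2}\,d\lambda\,\|u\|_{L^2}^2$; the integral $\int_{R^{-2}}^\infty \lambda^{s-2}\,d\lambda$ converges precisely because $s-2<-1$, i.e. $s<1$, and equals a constant times $R^{2(1-s)}$, so this contribution is also $\lesssim R^{-2}\cdot R^{2(1-s)}\|u\|_{L^2}^2=R^{-2s}\|u\|_{L^2}^2$. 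Summing the two regimes yields the claimed $\lesssim R^{-2s}$ (with the implicit constant depending on $\|u\|_{L^2}$, which is fine since mass is conserved and bounded).

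The main obstacle I anticipate is bookkeeping the two endpoints of the $\lambda$-integral so that both pieces genuinely close: the small-$\lambda$ piece needs the Poincaré inequality on the ball to absorb the factor $R^2$ (this is where the compact support of $\Delta\chi$, hence the restriction $|x|\lesssim R$, is essential — one cannot afford to drop it), while the large-$\lambda$ piece needs the integrability $\int^\infty \lambda^{s-2}<\infty$, which is exactly why $s<1$; neither piece alone needs $s>\tfrac12$, but if one prefers to phrase the small-$\lambda$ estimate via the $\dot H^{1/2}$-type weighted bound $\||\xi|^{-1/2}\hat u\|_{L^2}$ in place of mass, the radial Strauss/Hardy machinery (or simply interpolation with $L^2$) keeps everything finite and the hypothesis $s>\tfrac12$ enters when matching with the derivative-loss dispersive framework used elsewhere in Section 4. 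A secondary technical point is making the formal manipulation $\int_0^\infty\lambda^s\int \chi_R\Delta\chi_R|u_\lambda|^2$ rigorous — one should first verify absolute convergence of the double integral, which is precisely what the two estimates above supply, so the splitting argument simultaneously justifies the identity and proves the bound. I would write it up by first stating the $|\chi_R\Delta\chi_R|\lesssim R^{-2}\mathbf 1_{|x|\lesssim R}$ bound, then the two one-line $\lambda$-regime estimates, then summing.
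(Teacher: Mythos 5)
Your proposal is correct and takes a genuinely different route from the paper's, and in fact it proves a slightly stronger statement. The paper splits the $\lambda$-integral at a parameter $M$ (later optimized to $M=R^{-2}$); for the low-$\lambda$ regime it estimates $\left|\int\chi_R\Delta\chi_R|u_\lambda|^2\right|$ by H\"older as $\|\chi_R\|_{L^d}\|\Delta\chi_R\|_{L^\infty}\|u_\lambda\|_{L^2}\|u_\lambda\|_{L^{2d/(d-2)}}$, using the Sobolev embedding $\dot H^1\hookrightarrow L^{2d/(d-2)}$ together with $\|u_\lambda\|_{L^2}\lesssim\lambda^{-1}$ and $\|\nabla u_\lambda\|_{L^2}\lesssim\lambda^{-1/2}$, which yields an integrand $\sim R^{-1}\lambda^{s-3/2}$; the convergence of $\int_0^M\lambda^{s-3/2}d\lambda$ at $\lambda=0$ is exactly where the hypothesis $s>\tfrac12$ enters. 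You instead keep the full strength of the localization $|\chi_R\Delta\chi_R|\lesssim R^{-2}\mathbf 1_{|x|\lesssim R}$ and pair $\|\nabla u_\lambda\|_{L^2}^2\lesssim\lambda^{-1}$ with Hardy's inequality
$\int_{|x|\lesssim R}|u_\lambda|^2\,dx\le R^2\int_{\mathbb R^d}|x|^{-2}|u_\lambda|^2\,dx\lesssim R^2\|\nabla u_\lambda\|_{L^2}^2$
(valid for $d\ge3$, which the paper assumes); the low-$\lambda$ integrand becomes $\sim\lambda^{s-1}$, giving convergence for every $s>0$ and the clean bound $R^{-2s}$ without optimizing over a cutoff. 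The high-$\lambda$ regime is essentially the same in both proofs (using $\|u_\lambda\|_{L^2}\lesssim\lambda^{-1}$ and $\int^\infty\lambda^{s-2}\,d\lambda<\infty$ for $s<1$). So both arguments close; yours is marginally more economical, exploits the support of $\chi_R\Delta\chi_R$ where the paper pays a factor $R$ for $\|\chi_R\|_{L^d}$, and removes the artificial restriction $s>\tfrac12$ from this lemma (that restriction is still needed elsewhere in the paper, so nothing downstream improves, but the lemma itself holds for all $s\in(0,1)$ by your route). One small caveat: in your parenthetical alternative you wrote $\|u_\lambda\|_{L^2}^2\lesssim\lambda^{-1}\||\xi|^{-1/2}\hat u\|_{L^2}^2$, but AM--GM on $(\lambda+|\xi|^2)^{-2}$ actually gives $\lambda^{-1}\||\xi|^{-1}\hat u\|_{L^2}^2$; this does not affect the main argument, since the Hardy route you actually execute is correct as stated.
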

\begin{proof}
	We write
	$$ \left(\int_0^{M}+\int_M^{\infty}\right)\lambda^sd\lambda \int_{\mathbb{R}^d}\chi_R\Delta\chi_R|u_{\lambda}|^2dx:=I+II,
	$$
	with $M>0$ to be chosen later.
	\begin{equation*}
	\begin{split}
	|I|&\leq \int_0^M\lambda^sd\lambda\|\chi_R\|_{L_x^d}\|\Delta\chi_R\|_{L_x^{\infty}}\|u_{\lambda}\|_{L_x^2}\|u_{\lambda}\|_{L_x^{\frac{2d}{d-2}}}\\
	&\lesssim \frac{1}{R}\int_0^M\lambda^sd\lambda^{s-\frac{3}{2}}d\lambda \\
	&\lesssim M^{s-\frac{1}{2}}R^{-1},
	\end{split}
	\end{equation*}
	thanks to $s>\frac{1}{2}$, where in the final step, we have used Sobolev imbedding and the estimate
	$$ \|u_{\lambda}\|_{L_x^2}\lesssim \lambda^{-1}\|u\|_{L_x^2},
	\|\nabla u_{\lambda}\|_{L_x^2}\lesssim \lambda^{-\frac{1}{2}}\|u\|_{L_x^2}.
	$$
	\begin{equation*}
	\begin{split}
	|II|&\lesssim \frac{1}{R^2}\int_M^{\infty}\lambda^sd\lambda\int_{\mathbb{R}^d}|u_{\lambda}|^2d\lambda\\
	&\lesssim \frac{1}{R^2}\int_M^{\infty}\lambda^sd\lambda\int_{\mathbb{R}^d}\frac{|\widehat{u}(\xi)|^2}{(|\xi|^2+\lambda)^2}d\xi\\
	&\lesssim \frac{1}{R^2M^{1-s}}.
	\end{split}
	\end{equation*}
	We optimally choose $M=R^{-2}$, thus
	$$ \left|\int_0^{\infty}\lambda^sd\lambda \int_{\mathbb{R}^d}\chi_R\Delta\chi_R|u_{\lambda}|^2dx\right|\lesssim R^{-2s}.
	$$
\end{proof}
\begin{lemma}\label{commutator2}
$$ L_R:=\left|\int_0^{\infty}\lambda^sd\lambda\int_{\mathbb{R}^d}\left(\nabla
(-\Delta+\lambda)^{-1}[-\Delta,\chi_R]u_{\lambda}\right)\cdot \nabla (\chi_R\ov{u})_{\lambda}dx\right|\lesssim \frac{1}{R^{2s\theta}},
$$
for some $\theta\in\left(
\frac{1}{2s},1\right)$.
\end{lemma}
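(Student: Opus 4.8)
The plan is to mimic the structure of the proof of Lemma \ref{commutator}: split the $\lambda$-integral at a threshold $M=M(R)$ into a low-frequency part $I=\int_0^M$ and a high-frequency part $II=\int_M^\infty$, estimate each piece in terms of $R$ and $M$, and then optimize $M$ in $R$ at the end. The extra difficulty here is the commutator $[-\Delta,\chi_R]=-(\Delta\chi_R)-2\nabla\chi_R\cdot\nabla$, which is a first-order operator with coefficients of size $R^{-1}$ (for the $\nabla\chi_R$ term) and $R^{-2}$ (for the $\Delta\chi_R$ term); the leading contribution comes from the $\nabla\chi_R\cdot\nabla$ piece. So first I would expand $[-\Delta,\chi_R]u_\lambda = -2\nabla\chi_R\cdot\nabla u_\lambda - (\Delta\chi_R)u_\lambda$, note $\|\nabla\chi_R\|_{L^\infty}\lesssim R^{-1}$, $\|\Delta\chi_R\|_{L^\infty}\lesssim R^{-2}$, and that $\nabla\chi_R,\Delta\chi_R$ are supported on $|x|\sim R$.

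For the main term, after writing $\nabla(-\Delta+\lambda)^{-1}$ I would move the gradients around using that $(-\Delta+\lambda)^{-1}$ and $\nabla$ commute, and use the basic resolvent bounds already recorded in the proof of Lemma \ref{commutator}, namely
$$
\|(-\Delta+\lambda)^{-1}g\|_{L^2}\lesssim \lambda^{-1}\|g\|_{L^2},\qquad
\|\nabla(-\Delta+\lambda)^{-1}g\|_{L^2}\lesssim \lambda^{-1/2}\|g\|_{L^2},
$$
together with $\|u_\lambda\|_{L^2}\lesssim\lambda^{-1}\|u\|_{L^2}$ and $\|\nabla u_\lambda\|_{L^2}\lesssim\lambda^{-1/2}\|u\|_{L^2}$, and similarly for $(\chi_R\ov u)_\lambda$. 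Pairing the two factors by Cauchy–Schwarz in $x$ and counting powers of $\lambda$, the low-frequency piece $I$ produces a positive power $M^{\rho}$ (with $\rho>0$ controlled by $s>\tfrac12$) times a negative power of $R$, and the high-frequency piece $II$ produces a negative power $M^{-\rho'}$ times a power of $R$; here one uses the Plancherel bound $\int_M^\infty\lambda^s\,d\lambda\int |\widehat u(\xi)|^2(|\xi|^2+\lambda)^{-2}d\xi\lesssim M^{s-1}\|u\|_{L^2}^2$ exactly as in Lemma \ref{commutator}. Choosing $M$ as an appropriate negative power of $R$ balances the two and yields the bound $R^{-2s\theta}$ with $\theta$ forced into the interval $(\tfrac1{2s},1)$ precisely because $\tfrac12<s<1$; I would also check that the $\Delta\chi_R$ sub-term is better behaved (it gains an extra $R^{-1}$) and so is harmless.

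The main obstacle I anticipate is bookkeeping: one must be careful that the commutator term $\nabla\chi_R\cdot\nabla u_\lambda$ is estimated so that the derivative falls on $u_\lambda$ (giving $\lambda^{-1/2}$) rather than being wasted, and one must track which of the two factors $\nabla(-\Delta+\lambda)^{-1}[-\Delta,\chi_R]u_\lambda$ and $\nabla(\chi_R\ov u)_\lambda$ absorbs the extra resolvent so the total $\lambda$-power is integrable on $(0,M)$ and decaying on $(M,\infty)$. A secondary point is that, unlike in Lemma \ref{commutator}, the spatial localization of $\nabla\chi_R$ to the annulus $|x|\sim R$ may need to be exploited (e.g.\ via a Hölder/Sobolev step as in the estimate of $I$ there, using $\|\chi_R\|_{L^d}\lesssim R^0$ or $\|\nabla\chi_R\|_{L^d}\lesssim R^{-1+1}=R^0$-type bounds) to avoid a logarithmic loss; once the exponents are arranged so that the $R$-power coming out of $I$ and $II$ after optimization is a fixed negative power strictly between $-2s$ and $-1$, the claimed form $R^{-2s\theta}$ with $\theta\in(\tfrac1{2s},1)$ follows. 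Everything else is the same optimize-in-$M$ computation already carried out for Lemma \ref{commutator}.
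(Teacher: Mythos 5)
Your plan --- Cauchy--Schwarz in $x$ at each fixed $\lambda$, then split the $\lambda$-integral at a threshold $M$ into $I=\int_0^M$ and $II=\int_M^\infty$, estimate and optimize $M$ in $R$ --- is the strategy of Lemma \ref{commutator}, but it does not close for Lemma \ref{commutator2}, and the missing step is exactly where the parameter $\theta$ comes from.

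The difficulty is that here both factors carry a gradient. After Cauchy--Schwarz in $x$ and the operator bound $\|\nabla(-\Delta+\lambda)^{-1}\|_{L^2\to L^2}\lesssim\lambda^{-1/2}$, you are looking at
$$
L_R\lesssim \int_0^\infty \lambda^{s-\frac12}\,\bigl\|[-\Delta,\chi_R]u_\lambda\bigr\|_{L^2_x}\,\bigl\|\nabla(\chi_R\ov u)_\lambda\bigr\|_{L^2_x}\,d\lambda,
$$
and the elementary bounds $\|\nabla u_\lambda\|_{L^2}\lesssim\lambda^{-1/2}$, $\|\nabla(\chi_R\ov u)_\lambda\|_{L^2}\lesssim\lambda^{-1/2}$ give an integrand of size $R^{-1}\lambda^{s-3/2}$, whose tail $\int_M^\infty$ diverges for every $s\in(\tfrac12,1)$. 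In Lemma \ref{commutator} the integrand only involved $|u_\lambda|^2$, which decays like $\lambda^{-2}$, so the split worked; here the derivatives ruin that, and the Plancherel bound you quote (``$\int_M^\infty\lambda^s\,d\lambda\int|\widehat u|^2(|\xi|^2+\lambda)^{-2}\,d\xi\lesssim M^{s-1}$'') controls $\|u_\lambda\|_{L^2}^2$, not $\|\nabla u_\lambda\|_{L^2}^2$ and not with the weight that appears in $L_R$. So the ``bookkeeping'' you flag as a secondary worry is actually the wall: with this route there is no choice of $M=M(R)$ that makes both $I$ and $II$ small, let alone that produces an exponent $2s\theta\in(1,2)$.

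What the paper does, and what you need, is an additional Cauchy--Schwarz \emph{in the $\lambda$ variable} with an asymmetric distribution of the weight: write $\lambda^{s-\frac12} = \lambda^{(2s\theta-1)/2}\cdot\lambda^{s(1-\theta)}$ and bound
$$
L_R\lesssim\left(\int_0^\infty\lambda^{2s\theta-1}\bigl\|[-\Delta,\chi_R]u_\lambda\bigr\|_{L^2_x}^2\,d\lambda\right)^{1/2}
\left(\int_0^\infty\lambda^{2s(1-\theta)}\bigl\|\nabla(\chi_R\ov u)_\lambda\bigr\|_{L^2_x}^2\,d\lambda\right)^{1/2}.
$$
The second bracket is, by the Balakrishnan/Plancherel identity \eqref{hsnorm} (applied with exponent $2s(1-\theta)$ in place of $s$), comparable to $\|\chi_R u\|_{\dot H^{2s(1-\theta)}}^2$, which is $O(1)$ uniformly in $R$ once $2s(1-\theta)<s$, i.e.\ $\theta>\tfrac12$. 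Only \emph{then} can the remaining single $\lambda$-integral be treated by the two-regime split of Lemma \ref{commutator}, because the weight $\lambda^{2s\theta-1}$ is integrable near $0$ precisely when $2s\theta>1$ and decays fast enough at $\infty$ (against $\|u_\lambda\|_{L^2}^2\lesssim\lambda^{-2}$) precisely when $2s\theta<2$; optimizing at $M=R^{-2}$ then gives $R^{-4s\theta}$ for the bracket and $R^{-2s\theta}$ overall. This is where the constraint $\theta\in(\tfrac1{2s},1)$ actually originates; in your write-up $\theta$ never enters the estimates, which is a signal that a mechanism is missing. Adding this $\lambda$-Cauchy--Schwarz step, identifying the second factor via \eqref{hsnorm}, and noting the constraints on $\theta$ it imposes, would repair the argument.
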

\begin{proof}
Pick $\theta\in(0,1)$ such that $1<2s\theta<2$ and $2s(1-\theta)<s$ .

Writing
$$ [-\Delta,\chi_R]u_{\lambda}=-(\Delta\chi_R)u_{\lambda}-2\nabla\chi_R\cdot\nabla u_{\lambda}
$$
and utilizing $\|\nabla (-\Delta+\lambda)^{-1}\|_{L^2\rightarrow L^2}\leq \lambda^{-1/2}$,  we have from Cauchy-Schwartz that
\begin{equation*}
\begin{split}
L_R&\leq\left(\int_0^{\infty}\lambda^{2s\theta-1}\left\|(\Delta\chi_R)u_{\lambda}+2\nabla\chi_R\cdot\nabla u_{\lambda}\right\|_{L_x^2}^2 d\lambda\right)^{1/2}
\left(
\int_{0}^{\infty}\lambda^{2s(1-\theta)} \|\nabla (\chi_R\ov{u})_{\lambda}\|_{L_x^2}^2d\lambda\right)^{1/2}\\
&\lesssim \|\chi_R\ov{u}\|_{\dot{H}_x^{2s(1-\theta)}}\left(\int_0^{\infty}\lambda^{2s\theta-1}\left\|(\Delta\chi_R)u_{\lambda}+2\nabla\chi_R\cdot\nabla u_{\lambda}\right\|_{L_x^2}^2 d\lambda\right)^{1/2}\\
&\lesssim
\left(\int_0^{\infty}\lambda^{2s\theta-1}\left\|(\Delta\chi_R)u_{\lambda}+2\nabla\chi_R\cdot\nabla u_{\lambda}\right\|_{L_x^2}^2 d\lambda\right)^{1/2}.
\end{split}
\end{equation*}

Taking $M>0$ to be chosen later, we have
\begin{equation*}
\begin{split}
\int_0^M\lambda^{2s\theta-1}d\lambda\int |\Delta\chi_R|^2|u_{\lambda}|^2dx
&\lesssim \int_{0}^{M}\lambda^{2s\theta-1}\|\Delta\chi_R\|_{L_x^d}^2\|u_{\lambda}\|_{L_x^{\frac{2d}{d-2}}}^2d\lambda\\
&\lesssim \frac{1}{R^2}\int_0^{M}\lambda^{2s\theta-2}d\lambda\\
&\lesssim \frac{M^{2s\theta-1}}{R^2}.
\end{split}
\end{equation*}

\begin{equation*}
\begin{split}
\int_M^{\infty}\lambda^{2s\theta-1}d\lambda\int |\Delta\chi_R|^2|u_{\lambda}|^2dx&\lesssim \int_M^{\infty}\lambda^{2s\theta-1}\|\Delta\chi_R\|_{L_x^{\infty}}^2\|u_{\lambda}\|_{L_x^2}^2   d\lambda\\
&\lesssim \frac{1}{R^4}\int_M^{\infty}\lambda^{2s\theta-3}d\lambda\\
&\lesssim \frac{1}{R^4M^{2-2s\theta}}.
\end{split}
\end{equation*}
We optimally choose $M=R^{-2}$, thus
$$ \int_0^{\infty}\lambda^{2s\theta-1}d\lambda \int |\Delta\chi_R|^2|u_{\lambda}|^2dx\lesssim \frac{1}{R^{4s\theta}}.
$$
\end{proof}

\begin{lemma}\label{Mbound}
Suppose $\nabla\varphi\in  W^{2,\infty}(\mathbb{R}^d)$ and $u\in H^{\frac{1}{2}}(\mathbb{R}^d)$, then
$$ \left|\int_{\mathbb{R}^d}\ov{u}\nabla u\cdot\nabla \varphi dx\right|\lesssim \|u\|_{H^{\frac{1}{2}}(\mathbb{R}^d)}^2\|\nabla \varphi\|_{W^{2,\infty}(\mathbb{R}^d)}.
$$
\end{lemma}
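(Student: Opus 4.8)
The plan is to read $\int_{\mathbb{R}^d}\bar u\,\nabla u\cdot\nabla\varphi\,dx$ not as a Lebesgue integral (which it need not be when $u$ has only half a derivative) but as the duality pairing between $H^{-1/2}$ and $H^{1/2}$; this is precisely what will make $M_\varphi(t)$ meaningful for the $H^s$-solutions ($s\geq\frac12$) used later. By density of $\mathcal{S}(\mathbb{R}^d)$ in $H^{1/2}(\mathbb{R}^d)$ it suffices to prove the bound for Schwartz $u$, so every manipulation below is legitimate and then passes to the limit.

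First I would write $\nabla\varphi=(a_1,\dots,a_d)$ with $a_j=\partial_j\varphi$, noting $a_j\in W^{1,\infty}(\mathbb{R}^d)$ and $\|a_j\|_{W^{1,\infty}}\leq\|\nabla\varphi\|_{W^{2,\infty}}$. For each $j$,
$$\Big|\int_{\mathbb{R}^d}\bar u\,a_j\,\partial_j u\,dx\Big|\leq\|a_j\,\partial_j u\|_{H^{-1/2}}\,\|\bar u\|_{H^{1/2}}=\|a_j\,\partial_j u\|_{H^{-1/2}}\,\|u\|_{H^{1/2}},$$
so the lemma reduces to the bound $\|a_j\,\partial_j u\|_{H^{-1/2}}\lesssim\|a_j\|_{W^{1,\infty}}\|u\|_{H^{1/2}}$, after which one sums in $j$. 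This bound factors through two elementary facts: (i) $\partial_j\colon H^{1/2}\to H^{-1/2}$ is bounded, since $\langle\xi\rangle^{-1/2}|\xi_j|\lesssim\langle\xi\rangle^{1/2}$, hence $\|\partial_j u\|_{H^{-1/2}}\lesssim\|u\|_{H^{1/2}}$; and (ii) multiplication by $a_j$ is bounded on $H^{-1/2}$ with operator norm $\lesssim\|a_j\|_{W^{1,\infty}}$. For (ii), multiplication by a $W^{1,\infty}$ function is bounded on $L^2$ (norm $\leq\|a_j\|_{L^\infty}$) and on $H^1$ (norm $\lesssim\|a_j\|_{W^{1,\infty}}$, by the Leibniz rule), hence on $H^{1/2}=[L^2,H^1]_{1/2}$ by interpolation, and therefore on the dual space $H^{-1/2}$.

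I do not expect a genuine obstacle here: the argument is soft, and the only point demanding care is the one already flagged — justifying that the integral is well-defined for $u\in H^{1/2}$ alone, which is handled by first working with Schwartz functions and reading everything as the $H^{-1/2}$–$H^{1/2}$ pairing. As a side remark, the same computation shows $\nabla\varphi\in W^{1,\infty}$ already suffices; alternatively one could symmetrize via integration by parts, $2\,\mathrm{Re}\int\bar u\,\nabla u\cdot\nabla\varphi\,dx=-\int|u|^2\,\Delta\varphi\,dx$, to dispose of the real part for free and reduce to the imaginary part $M_\varphi$, but the duality estimate above already controls both parts at once.
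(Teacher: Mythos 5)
Your proof is correct and takes a genuinely different route from the paper's. The paper reduces, via Cauchy--Schwarz, to bounding $\|D^{1/2}(\bar u\,\nabla\varphi)\|_{L^2}$, and then evaluates this through the resolvent representation \eqref{hsnorm}: the $\lambda$-integral is split at $\lambda=1$, and for large $\lambda$ one commutes multiplication by $\nabla\varphi$ past $R(\lambda)=(-\Delta+\lambda)^{-1}$, which produces a term involving $\nabla\Delta\varphi$ and is precisely why the paper's statement requires $\nabla\varphi\in W^{2,\infty}$. You instead read the integral as the $H^{-1/2}$--$H^{1/2}$ duality pairing and factor through two soft facts: the Fourier-multiplier bound $\partial_j\colon H^{1/2}\to H^{-1/2}$, and boundedness of multiplication by a $W^{1,\infty}$ function on $H^{1/2}$ (interpolating between $L^2$ and $H^1$, then transferring to $H^{-1/2}$ by duality). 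This argument is clean, needs no justification issues beyond the Schwartz-density limiting step you flag, and---as you also note---establishes the estimate under the weaker hypothesis $\nabla\varphi\in W^{1,\infty}$, which is exactly the hypothesis in the Boulenger--Himmelsbach--Lenzmann appendix that the authors mention just before their proof. The trade-off is purely methodological: the paper deliberately sacrifices the optimal hypothesis in order to run the whole estimate inside the Balakrishnan/resolvent framework that the rest of Section 4 is built on, whereas your route buys back the sharp hypothesis at the cost of stepping outside that toolbox into complex interpolation.
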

This estimate appears in the appendix of \cite{Lenzmann} with a weaker condition $\nabla\varphi\in W^{1,\infty}(\mathbb{R}^d)$. Here we perform a different proof. Though we need a stronger condition on $\nabla\varphi,$ our proof has its own interest where we apply formula \eqref{hsnorm} instead of physical space characterization of fractional Sobolev space.
\begin{proof}
By duality, it is reduced to estimate
$\|D^{1/2}(\ov{u}\nabla\varphi)\|_{L_x^2}$.

Applying \eqref{hsnorm}, we write
\begin{equation*}
\begin{split}
\|D^{1/2}(\ov{u}\nabla\varphi)\|_{L_x^2}^2=& 2\left(\int_0^1+\int_1^{\infty}\right)\lambda^{1/2}d\lambda\int |\nabla (\ov{u}\nabla\varphi)_{\lambda}|^2dx\\
=&:I+II.
\end{split}
\end{equation*}
For the first term,
\begin{equation*}
\begin{split}
I&\lesssim \int_0^1\lambda^{1/2}(\lambda^{-1/2}\|\ov{u}\nabla\varphi\|_{L_x^2})^2d\lambda\\
&\lesssim \|u\|_{L_x^2}^2\|\nabla\varphi\|_{L_x^{\infty}}^2.
\end{split}
\end{equation*}
While for the second term
\begin{equation*}
\begin{split}
II&\lesssim \int_1^{\infty}\lambda^{1/2}\left(\|(\nabla\ov{u}\otimes\nabla\varphi)_{\lambda}\|_{L_x^2}^2+\|(\ov{u}\nabla^2\varphi)_{\lambda}\|_{L_x^2}^2\right)d\lambda,
\end{split}
\end{equation*}
Denote $R(\lambda)=(-\Delta+\lambda)^{-1}$, and write
\begin{equation*}
\begin{split}
(\nabla\ov{u}\otimes\nabla\varphi)_{\lambda}&=\nabla\varphi\otimes\nabla\ov{u_{\lambda}}+[R(\lambda),\nabla\varphi]\nabla\ov{u}\\&=\nabla\varphi\otimes\nabla\ov{u_{\lambda}}
+R(\lambda)[\nabla\varphi,-\Delta]\nabla\ov{u_{\lambda}}.
\end{split}
\end{equation*}
We estimate
$$ \|\ov{u}\nabla^2\varphi\|_{L_x^2}\lesssim \lambda^{-1}\|\nabla^2\varphi\|_{L_x^{\infty}}\|u\|_{L_x^2},
\|\nabla\varphi\otimes\nabla\ov{u_{\lambda}}\|_{L_x^2}\lesssim \|\nabla\varphi\|_{L_x^{\infty}}\|\nabla\ov{u_{\lambda}}\|_{L_x^2},
$$
and
$$\|R(\lambda)[\nabla\varphi,-\Delta]\nabla\ov{u_{\lambda}}\|_{L_x^2}\lesssim \lambda^{-1}\left(\|\nabla\Delta\varphi\|_{L_x^{\infty}}\|\nabla\ov{u_{\lambda}}\|_{L_x^2}+
\|\nabla^2\varphi\|_{L_x^{\infty}}\|\ov{u_{\lambda}}\|_{L_x^2}\right),
$$
and these imply that
$$ II\lesssim \|\nabla\varphi\|_{W_x^{2,\infty}}^2\|u\|_{H_x^{\frac{1}{2}}}^2.
$$
\end{proof}
\subsection{Coercivity}
We review the variational analysis related to the ground state $Q$. See \cite{Lenzmann}.
Recall that $Q$ is a radial solution to
\begin{equation}\label{elliptic}
(-\Delta)^sQ+Q-Q^{p}=0,
\end{equation}
and $Q$ is an extremum of the sharp Gagliardo-Nirenberg inequality, namely
$$\|Q\|_{L^{p+1}(\mathbb{R}^d)}^{p+1}= C(d,p,s)\|(-\Delta)^{s/2}Q\|_{L^2(\mathbb{R}^d)}^\frac{4s_c}{d-2s_c}\|Q\|_{L^2(\mathbb{R}^d)}^{p+1-\frac{4(s-s_c)}{d-2s_c}}.
$$
Multiplying by $x\cdot \nabla Q$ to the equation \eqref{elliptic}, and applying the following Pohozaev identity on $Q$
$$\int_{\mathbb{R}^d}x\cdot \nabla Q(x)dx=-\frac{d-2s}{2}\|Q\|_{\dot{H}_x^s}^2,
$$
we can obtain (see also the appendix in \cite{Lenzmann})
\begin{proposition}\label{soliton}
$$\|Q\|_{\dot{H}_x^s}^{s_c}\|Q\|_{L_x^2}^{s-s_c}=\left(\frac{2d+4(s-s_c)}{2dC(d,p,s)}\right)^{\frac{d-2s_c}{4}}.
$$
\end{proposition}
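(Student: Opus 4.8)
The plan is to combine the two scaling identities satisfied by $Q$ with the fact that $Q$ saturates the sharp Gagliardo--Nirenberg inequality; this reduces the whole statement to a $2\times 2$ linear system together with one algebraic elimination. Throughout, abbreviate $A=\|Q\|_{\dot H_x^s}^2$, $B=\|Q\|_{L_x^2}^2$ and $N=\|Q\|_{L_x^{p+1}}^{p+1}$; the goal is to pin down the ratios among $A,B,N$ and then the absolute scale of $N$.

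First, a Nehari-type identity: pairing \eqref{elliptic} with $Q$ and integrating, the self-adjointness of $(-\Delta)^s$ gives $A+B=N$. Second, a Pohozaev identity: pairing \eqref{elliptic} with the dilation field $x\cdot\nabla Q$, I use for the nonlocal term the fractional Pohozaev identity recalled above, $\int_{\R^d}(x\cdot\nabla Q)(-\Delta)^sQ\,dx=-\tfrac{d-2s}{2}A$, and for the remaining two terms ordinary integration by parts (with $\nabla\cdot x=d$), giving $\int_{\R^d}(x\cdot\nabla Q)\,Q\,dx=-\tfrac d2 B$ and $\int_{\R^d}(x\cdot\nabla Q)\,Q^p\,dx=-\tfrac{d}{p+1}N$. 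Adding these up yields the second relation $\tfrac{d-2s}{2}A+\tfrac d2 B=\tfrac{d}{p+1}N$.

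Solving the $2\times2$ system for $A,B$ (treating $N$ as a parameter) then produces $A=\kappa N$ and $B=(1-\kappa)N$ with $\kappa=\tfrac{d(p-1)}{2s(p+1)}$; using the identity $p-1=\tfrac{4s}{d-2s_c}$, which is just the definition of $s_c$, this rewrites as $\kappa=\tfrac{d}{d+2(s-s_c)}\in(0,1)$. Next I substitute $A=\kappa N$, $B=(1-\kappa)N$ into the sharp Gagliardo--Nirenberg equality satisfied by $Q$; since the two powers of $N$ on the right-hand side combine to $N^{(p+1)/2}$, the equality determines $N$ explicitly in terms of $C(d,p,s)$, $\kappa$ and $1-\kappa$. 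Finally I insert this value of $N$, together with the value of $\kappa$, into
$$
\|Q\|_{\dot H_x^s}^{s_c}\|Q\|_{L_x^2}^{s-s_c}=A^{s_c/2}B^{(s-s_c)/2}=\kappa^{s_c/2}(1-\kappa)^{(s-s_c)/2}N^{s/2},
$$
and simplify using the consequences $\tfrac{s}{p-1}=\tfrac{d-2s_c}{4}$, $\tfrac{d(p-1)}{4s}\cdot\tfrac{s}{p-1}=\tfrac d4$ and $\tfrac{s(p+1)}{2(p-1)}=\tfrac s2+\tfrac{d-2s_c}{4}$ of the definition of $s_c$. Tracking the exponents, the $(1-\kappa)$ factor cancels completely while the $\kappa$- and $C(d,p,s)$-exponents both collapse to $-\tfrac{d-2s_c}{4}$, leaving $\|Q\|_{\dot H_x^s}^{s_c}\|Q\|_{L_x^2}^{s-s_c}=\bigl(\kappa\,C(d,p,s)\bigr)^{-\frac{d-2s_c}{4}}$, which is the claimed formula once one writes $\kappa^{-1}=\tfrac{2d+4(s-s_c)}{2d}$.

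The mathematically substantive inputs are exactly the fractional Pohozaev identity and the variational characterization of $Q$ as a Gagliardo--Nirenberg optimizer, both of which are available from \cite{Lenzmann} (with uniqueness from \cite{Lenzmann2}); everything else is elementary algebra. Accordingly, the only genuinely delicate point is the rigorous justification of the pairing with $x\cdot\nabla Q$ — one needs enough decay and regularity of the $H^s$ ground state $Q$ for the nonlocal integration by parts to be licit — and this is precisely what is carried out in the appendix of \cite{Lenzmann}. The one feature worth underlining is the exact cancellation of the $(1-\kappa)$ factor in the last step: this is not an accident but is forced by the fact that $\|\cdot\|_{\dot H^s}^{s_c}\|\cdot\|_{L^2}^{s-s_c}$ is invariant under the scaling symmetry of \eqref{equ:equ1.1}, which is exactly why this particular product is the natural quantity to evaluate.
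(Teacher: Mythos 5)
Your proof is correct and is essentially the same Pohozaev-plus-Gagliardo--Nirenberg argument the paper indicates and delegates to the appendix of \cite{Lenzmann}; you have simply spelled out the details the paper omits (the Nehari identity $A+B=N$, the $2\times 2$ elimination giving $\kappa=\tfrac{d(p-1)}{2s(p+1)}$, and the exponent bookkeeping). A small remark: the Pohozaev identity as displayed in the paper is missing the factor $(-\Delta)^sQ$ in the integrand on its left-hand side, and you have read it correctly as $\int_{\R^d}(x\cdot\nabla Q)\,(-\Delta)^sQ\,dx=-\tfrac{d-2s}{2}\|Q\|_{\dot H^s_x}^2$.
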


Now we prove a coercivity result:
\begin{lemma}\label{coercivity1}
Suppose
\begin{equation}\label{threshold}
\begin{split}
& E[u_0]^{s_c}M[u_0]^{s-s_c}<(1-\delta)E[Q]^{s_c}M[Q]^{s-s_c},\\
& \|(-\Delta)^{s/2}u_0\|_{L^2(\mathbb{R}^d)}^{s}\|u_0\|_{L^2(\mathbb{R}^d)}^{s-s_c}\leq
\|(-\Delta)^{s/2}Q\|_{L^2(\mathbb{R}^3)}^{s_c}\|Q\|_{L^2(\mathbb{R}^d)}^{s-s_c}.
\end{split}
\end{equation}

Then there exists $\delta'$, depending only on $\delta,$ such that in the life span (i.e. $|t|<T^*$), the flow $u(t)$ of \eqref{equ:equ1.1} satisfies
$$ \|(-\Delta)^{s/2}u(t)\|_{L^2(\mathbb{R}^d)}^{s_c}\|u(t)\|_{L^2(\mathbb{R}^d)}^{s-s_c}\leq  (1-\delta')
\|(-\Delta)^{s/2}Q\|_{L^2(\mathbb{R}^d)}^{s_c}\|Q\|_{L^2(\mathbb{R}^d)}^{s-s_c}.
$$
\end{lemma}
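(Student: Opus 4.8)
\emph{Proof strategy.} The plan is to run the standard variational/continuity argument (the analogue of the NLS case) organized around the scaling-invariant product $E[u]^{s_c}M[u]^{s-s_c}$. Set
\[
y(t):=\|(-\Delta)^{s/2}u(t)\|_{L^2}^{s_c}\|u(t)\|_{L^2}^{s-s_c},\qquad
y_Q:=\|(-\Delta)^{s/2}Q\|_{L^2}^{s_c}\|Q\|_{L^2}^{s-s_c};
\]
both are invariant under the scaling of \eqref{equ:equ1.1}, and the second line of \eqref{threshold} is exactly $y(0)\le y_Q$. Since $0<s_c<s$, the exponent $(s-s_c)/s_c$ is positive, so
\[
F(t):=E[u(t)]\,M[u(t)]^{(s-s_c)/s_c}
\]
is well defined, is $\ge 0$ because $E[u_0]\ge 0$, and is constant in $t$ (a product of the conserved mass and energy); moreover $F(t)\equiv\big(E[u_0]^{s_c}M[u_0]^{s-s_c}\big)^{1/s_c}$, and $\big(E[Q]^{s_c}M[Q]^{s-s_c}\big)^{1/s_c}=E[Q]\,M[Q]^{(s-s_c)/s_c}$.

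First I would convert the sharp fractional Gagliardo--Nirenberg inequality into a pointwise (in $t$) lower bound for $F(t)$. Bounding $\|u(t)\|_{L^{p+1}}^{p+1}$ from above in $E[u(t)]$ and multiplying through by $M[u(t)]^{(s-s_c)/s_c}$, a routine exponent computation using $p-1=4s/(d-2s_c)$ makes the two resulting terms depend on $y(t)$ alone, giving, for every $t$ in the lifespan,
\[
F(t)\ \ge\ h\big(y(t)\big),\qquad
h(y):=\tfrac12\,y^{2/s_c}-\tfrac{C(d,p,s)}{p+1}\,y^{\frac{2d}{s_c(d-2s_c)}} .
\]
This step is unconditional, so no smallness of $y$ is presupposed. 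The function $h$ has $h(0)=0$, is strictly increasing then strictly decreasing since $\tfrac{2d}{s_c(d-2s_c)}>\tfrac{2}{s_c}$ (equivalently $s_c>0$), and has a unique interior critical point $y_\ast=\big(\tfrac{2d+4(s-s_c)}{2d\,C(d,p,s)}\big)^{(d-2s_c)/4}$; by Proposition~\ref{soliton} this $y_\ast$ equals $y_Q$. Because $Q$ saturates Gagliardo--Nirenberg, the same multiplication by $M[Q]^{(s-s_c)/s_c}$ yields $h(y_Q)=E[Q]\,M[Q]^{(s-s_c)/s_c}=\big(E[Q]^{s_c}M[Q]^{s-s_c}\big)^{1/s_c}>0$.

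Next I would run the continuity/topology step. For $0\le c<h(y_Q)$ the sublevel set $\{y\ge 0:\ h(y)\le c\}$ is $[0,\phi_-(c)]\cup[\phi_+(c),\infty)$ with $0\le\phi_-(c)<y_Q<\phi_+(c)<\infty$, and $\phi_-$ is continuous and strictly increasing on $[0,h(y_Q)]$ with $\phi_-(h(y_Q))=y_Q$. Since $t\mapsto u(t)$ is continuous into $H^s$ on its interval of existence and $\|u(t)\|_{L^2}$ is constant, $t\mapsto y(t)$ is continuous; the first line of \eqref{threshold}, raised to the power $1/s_c$, gives $F<(1-\delta)^{1/s_c}h(y_Q)<h(y_Q)$, hence $y(t)\in[0,\phi_-(F)]\cup[\phi_+(F),\infty)$ for all $t$. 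As the lifespan interval is connected and $y(0)\le y_Q$ together with $h(y(0))\le F$ forces $y(0)\le\phi_-(F)$, we conclude $y(t)\le\phi_-(F)$ throughout. Finally, by strict monotonicity of $\phi_-$,
\[
y(t)\ \le\ \phi_-(F)\ <\ \phi_-\!\big((1-\delta)^{1/s_c}h(y_Q)\big)\ =:\ (1-\delta')\,y_Q,
\]
with $\delta'\in(0,1)$ depending only on $\delta$ (and on the fixed $d,p,s$), which is the asserted bound.

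The routine part is the exponent bookkeeping leading to $F\ge h(y)$. The substantive point is the identification of the interior maximizer of $h$ with $y_Q$ and the evaluation $h(y_Q)=\big(E[Q]^{s_c}M[Q]^{s-s_c}\big)^{1/s_c}$: this is precisely where the variational characterization of $Q$ --- the Pohozaev identities packaged in Proposition~\ref{soliton} --- is essential. A secondary point requiring care is that, $s_c$ being fractional, one must argue with the $s_c$-th roots $F$ and $h$ rather than with $E^{s_c}M^{s-s_c}$ directly, which is legitimate precisely because $E[u_0]\ge 0$ is assumed.
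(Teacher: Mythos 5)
Your proof is correct and takes essentially the same route as the paper's: conservation of mass and energy plus the sharp Gagliardo--Nirenberg inequality is reduced to a polynomial inequality in the scaling-invariant quantity $y(t)$, whose interior maximizer coincides with $y_Q$ via Proposition~\ref{soliton}, followed by a continuity/connectedness argument. The only cosmetic difference is that you work with the unnormalized $y$ and lay out the sublevel-set step explicitly, whereas the paper normalizes $y$ by $y_Q$ and compresses the final continuity step into one line.
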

\begin{proof}
	From the conservation of mass and energy, we have
\begin{equation*}
\begin{split}
(1-\delta)^{\frac{1}{s_c}}E[Q]M[Q]^{\frac{s-s_c}{s_c}}&>E[u_0]M[u_0]^{\frac{s-s_c}{s_c}}\\&=
\left(\frac{1}{2}\|u(t)\|_{\dot{H}_x^s}^2-\frac{1}{p+1}\|u(t)\|_{L_x^{p+1}}^{p+1}\right)\|u(t)\|_{L_x^2}^{\frac{2(s-s_c)}{s_c}}\\
&\geq
\frac{1}{2}\|u(t)\|_{\dot{H}_x^s}^2\|u(t)\|_{L_x^2}^{\frac{2(s-s_c)}{s_c}}-\frac{C_{d,p,s}}{p+1}\|u(t)\|_{\dot{H}_x^s}^{\frac{2d}{d-2s_c}}
\|u(t)\|_{L_x^2}^{\frac{4(s-s_c)}{d-2s_c}+\frac{2(s-s_c)}{s_c}},
\end{split}
\end{equation*}
where we have used the sharp Gagliardo-Nirenberg inequality in the last step. Since $Q$ is the extremum of sharp Gagliardo-Nirenberg, we have
$$ 1-\delta\geq \left(\frac{d}{s_c}\right)^{s_c}\left(\frac{1}{2}y(t)^2-\frac{2s}{d(p-1)}y(t)^{\frac{2d}{d-2s_c}}\right)
$$
with
$$ y(t)=\frac{\|u(t)\|_{\dot{H}_x^s}\|u(t)\|_{L_x^2}^{\frac{s-s_c}{s_c}}}{\|Q\|_{\dot{H}_x^s}\|Q\|_{L_x^2}^{\frac{s-s_c}{s_c}}}.
$$
Notice that the initial condition gives
$$ 1-\delta\geq y(0)^{2s_c},
$$
and the lemma follows from continuity argument.
\end{proof}

\begin{remark}
	From the blow up criterion for energy sub-critical FNLS: 
$$ T^*<\infty \Longrightarrow
 \limsup_{t\rightarrow T^*}\|u(t,\cdot)\|_{H_x^s}=\infty,
$$
this lemma implies global well-posedness of FNLS under condition \eqref{threshold} with radial initial data in $H^s(\mathbb{R}^d)$.
\end{remark}

Now pick $\chi_R(x)=\chi(R^{-1}x)$ with a radial function $\chi\in C_c^{\infty}(\mathbb{R}^d)$, $\chi(x)=1,|x|\leq \frac{1}{2}$ and $0\leq \chi \leq 1$.

\begin{lemma}\label{coercivity2}
Under the assumption of lemma \ref{coercivity1}, there exists $R>0$, large enough and $\tilde{\delta}>0$, so that
$$ s\|\chi_R u(t)\|_{\dot{H}_x^s}^2-\frac{d(p-1)}{2(p+1)}\|\chi_R u(t)\|_{L_x^{p+1}}^{p+1},\geq \tilde{\delta} \|\chi_R u(t)\|_{L_x^{p+1}}^{p+1},\forall t\in\mathbb{R}.
$$
\end{lemma}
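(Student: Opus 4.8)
The plan is to transport the coercivity already available for the full solution (Lemma~\ref{coercivity1}) to the localized profile $v=v(t):=\chi_R u(t)$. First I would show that, for $R$ large enough (depending only on the conserved quantities and on $\delta$), $v$ still sits strictly below the ground-state threshold in the scaling-invariant sense, i.e.
$$
\|(-\Delta)^{s/2}v\|_{L^2}^{s_c}\|v\|_{L^2}^{s-s_c}\le(1-\delta'')\,\|(-\Delta)^{s/2}Q\|_{L^2}^{s_c}\|Q\|_{L^2}^{s-s_c}
$$
uniformly in $t$, and then run the standard sharp Gagliardo--Nirenberg variational argument for $v$. The only genuinely delicate point is that $(-\Delta)^s$ is nonlocal, so $\|\chi_R u\|_{\dot H^s}$ is \emph{not} automatically dominated by $\|(-\Delta)^{s/2}u\|_{L^2}$; estimating the defect is where the Balakrishnan representation \eqref{hsnorm} and the commutator bounds of Lemmas~\ref{commutator}--\ref{commutator2} come in.

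\textbf{Step 1: localizing the homogeneous norm.} By \eqref{hsnorm}, $s\|\chi_R u\|_{\dot H^s}^2=\int_0^\infty\lambda^s\,d\lambda\int_{\R^d}|\nabla(\chi_R u)_\lambda|^2\,dx$, and the resolvent identity gives $(\chi_R u)_\lambda=\chi_R u_\lambda-w_\lambda$ with $w_\lambda:=(-\Delta+\lambda)^{-1}[-\Delta,\chi_R]u_\lambda$. Writing $|\nabla(\chi_R u)_\lambda|^2=|\nabla(\chi_R u_\lambda)-\nabla w_\lambda|^2$, rewriting $\nabla(\chi_R u_\lambda)=\nabla(\chi_R u)_\lambda+\nabla w_\lambda$ inside the cross term, discarding the resulting nonpositive contribution $-\int_0^\infty\lambda^s\int|\nabla w_\lambda|^2$, and integrating by parts in $\int|\nabla(\chi_R u_\lambda)|^2$ (using $\chi_R\nabla\chi_R=\tfrac12\nabla(\chi_R^2)$), I reach
$$
s\|\chi_R u\|_{\dot H^s}^2\le\int_0^\infty\!\lambda^s\!\!\int\chi_R^2|\nabla u_\lambda|^2
-\int_0^\infty\!\lambda^s\!\!\int\chi_R\Delta\chi_R\,|u_\lambda|^2
+2\Bigl|\int_0^\infty\!\lambda^s\!\!\int\nabla(\chi_R u)_\lambda\cdot\ov{\nabla w_\lambda}\Bigr|.
$$
The first term is $\le s\|(-\Delta)^{s/2}u\|_{L^2}^2$ since $0\le\chi_R\le1$; the second is $O(R^{-2s})$ by Lemma~\ref{commutator}; the third is (up to conjugation) precisely $L_R=O(R^{-2s\theta})$ with $2s\theta>1$ by Lemma~\ref{commutator2}. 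All implied constants depend only on $\|u_0\|_{L^2}$ and $\sup_t\|u(t)\|_{H^s}$, which are finite and $t$-independent thanks to the global theory and uniform $H^s$ bound furnished by Lemma~\ref{coercivity1} and the remark following it. Hence $s\|\chi_R u(t)\|_{\dot H^s}^2\le s\|(-\Delta)^{s/2}u(t)\|_{L^2}^2+C_R$ with $C_R\to0$ as $R\to\infty$, uniformly in $t$.

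\textbf{Step 2: staying below the threshold.} Mass conservation gives $\|\chi_R u(t)\|_{L^2}\le\|u(t)\|_{L^2}=\|u_0\|_{L^2}$. Combining this with Step~1 and the uniform subcriticality $\|(-\Delta)^{s/2}u(t)\|_{L^2}^{s_c}\|u(t)\|_{L^2}^{s-s_c}\le(1-\delta')\|(-\Delta)^{s/2}Q\|_{L^2}^{s_c}\|Q\|_{L^2}^{s-s_c}$ of Lemma~\ref{coercivity1}, and taking $R$ large enough so that $C_R$ is absorbed into the gap, I obtain $\delta''=\delta''(\delta')>0$ with $\|\chi_R u(t)\|_{\dot H^s}^{s_c}\|\chi_R u(t)\|_{L^2}^{s-s_c}\le(1-\delta'')\|(-\Delta)^{s/2}Q\|_{L^2}^{s_c}\|Q\|_{L^2}^{s-s_c}$ for all $t$.

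\textbf{Step 3: variational inequality and conclusion.} Pairing the elliptic equation $(-\Delta)^sQ+Q=Q^p$ with $Q$ and with $x\cdot\nabla Q$ (the Pohozaev identities recalled above) and eliminating $\|Q\|_{L^2}^2$ yields $s\|(-\Delta)^{s/2}Q\|_{L^2}^2=\tfrac{d(p-1)}{2(p+1)}\|Q\|_{L^{p+1}}^{p+1}$, so the functional in the statement vanishes at $Q$. Feeding this and the sharp Gagliardo--Nirenberg inequality (whose extremizer is $Q$) together, and expressing the arising powers of $\|(-\Delta)^{s/2}v\|_{L^2}$ and $\|v\|_{L^2}$ through the scaling-invariant ratio, one obtains for every $v\in H^s(\R^d)$
$$
\tfrac{d(p-1)}{2(p+1)}\|v\|_{L^{p+1}}^{p+1}\le s\|(-\Delta)^{s/2}v\|_{L^2}^2\left(\frac{\|(-\Delta)^{s/2}v\|_{L^2}^{s_c}\|v\|_{L^2}^{s-s_c}}{\|(-\Delta)^{s/2}Q\|_{L^2}^{s_c}\|Q\|_{L^2}^{s-s_c}}\right)^{\!\frac{4}{d-2s_c}}.
$$
With $v=\chi_R u(t)$ and Step~2 the bracket is $\le(1-\delta'')^{4/(d-2s_c)}<1$; in particular $s\|(-\Delta)^{s/2}(\chi_R u)\|_{L^2}^2\ge\tfrac{d(p-1)}{2(p+1)}\|\chi_R u\|_{L^{p+1}}^{p+1}$, and subtracting this from the previous display gives
$$
s\|\chi_R u(t)\|_{\dot H^s}^2-\tfrac{d(p-1)}{2(p+1)}\|\chi_R u(t)\|_{L^{p+1}}^{p+1}\ge\tilde\delta\,\|\chi_R u(t)\|_{L^{p+1}}^{p+1},\quad \tilde\delta:=\tfrac{d(p-1)}{2(p+1)}\Bigl(1-(1-\delta'')^{\frac{4}{d-2s_c}}\Bigr)>0,
$$
which is the claim. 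The hard part is Step~1 — quantifying, uniformly in time, the $\dot H^s$-cost of the cutoff, where the nonlocality of $(-\Delta)^s$ forces one to rely on the Balakrishnan formula and Lemmas~\ref{commutator}--\ref{commutator2} (and on the favorable sign of the discarded $-\int|\nabla w_\lambda|^2$ term); Steps~2 and~3 are routine once the scaling-invariant quantity $\|(-\Delta)^{s/2}\cdot\|_{L^2}^{s_c}\|\cdot\|_{L^2}^{s-s_c}$ is isolated.
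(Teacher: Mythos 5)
Your proof is correct and follows essentially the same route as the paper: Steps 1--2 mirror the paper's use of the Balakrishnan representation \eqref{hsnorm} together with the commutator Lemmas \ref{commutator}--\ref{commutator2} to place $\chi_R u(t)$ strictly below the ground-state threshold uniformly in $t$, and Step 3 is the same sharp Gagliardo--Nirenberg variational argument (written via a direct upper bound on $\|v\|_{L^{p+1}}^{p+1}$ rather than via a lower bound on $E[v]$, which is algebraically equivalent).
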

\begin{proof}
This is another application of sharp Gagliardo-Nirenberg inequality. For any $v\in \dot{H}_x^s$, with
$$ \|v\|_{\dot{H}_x^s}^{s_c}\|v\|_{L_x^2}^{s-s_c}\leq (1-\delta')\|Q\|_{\dot{H}_x^s}^{s_c}\|Q\|_{L_x^2}^{s-s_c},
$$
we have
\begin{equation*}
\|v\|_{\dot{H}_x^s}^2-\frac{d(p-1)}{2s(p+1)}\|v\|_{L_x^{p+1}}^{p+1}=\frac{2d}{d-2s_c}E[v]-\left(\frac{d}{d-2s_c}-1\right)\|v\|_{\dot{H}_x^{s}}^{2}.
\end{equation*}
From Sharp Gagliardo-Nirenberg,
\begin{equation*}
\begin{split}
E[v]&\geq \frac{1}{2}\|v\|_{\dot{H}_x^s}^2-\frac{C(d,p,s)}{p+1}\|v\|_{\dot{H}_x^s}^{\frac{2d}{d-2s_c}}\|v\|_{L_x^{2}}^{\frac{4(s-s_c)}{d-2s_c}}\\
&\geq \|v\|_{\dot{H}_x^s}^2\left(\frac{1}{2}-\frac{C(d,p,s)}{p+1}(1-\delta')^{\frac{4}{d-2s_c}}
\|Q\|_{\dot{H}_x^s}^{\frac{4s_c}{d-2s_c}}\|Q\|_{L_x^2}^{\frac{4(s-s_c)}{d-2s_c}}\right)\\
&=\|v\|_{\dot{H}_x^s}^2\left(\frac{1}{2}-(1-\delta')^{\frac{4}{d-2s_c}}\frac{d-2s_c}{2d}\right),
\end{split}
\end{equation*}
where we have used Proposition \ref{soliton} in the last step. Thus
\begin{equation*}\label{coer}
\begin{split}
 &\|v\|_{\dot{H}_x^s}^2-\frac{d(p-1)}{2s(p+1)}\|v\|_{L_x^{p+1}}^{p+1}
\geq \delta''\|v\|_{\dot{H}_x^s}^2,\\
&\|v\|_{\dot{H}_x^s}^2-\frac{d(p-1)}{2s(p+1)}\|v\|_{L_x^{p+1}}^{p+1}\geq \frac{\delta''}{1+\delta''}\frac{2d(p-1)}{p+1}\|v\|_{L_x^{p+1}}^{p+1}.
\end{split}
\end{equation*}
To finish the proof, we just need verify the inequality
$$ \|\chi_Ru(t)\|_{\dot{H}_x^s}^{s_c}\|\chi_Ru(t)\|_{L^2}^{s-s_c}\leq (1-\delta''')\|Q\|_{\dot{H}_x^s}^{s_c}\|Q\|_{L^2}^{s-s_c}.
$$
Indeed, from integration by parts,
\begin{equation*}
\begin{split}
\int |\nabla (\chi_R v)|^2dx=&\int \chi_R^2|\nabla v|^2dx+
\int |\nabla \chi_R|^2|v|^2\\&+2\Re\int \chi_R\nabla \chi_R\cdot \ov{v}\nabla v dx
\\=&\int \chi_R^2|\nabla v|^2dx+
\int |\nabla \chi_R|^2|v|^2 +\frac{1}{2}\int \nabla(\chi_R^2)\cdot\nabla |v|^2dx\\
=&\int \chi_R^2|\nabla v|^2dx-\int \chi_R\Delta\chi_R |v|^2dx,
\end{split}
\end{equation*}
and
\begin{equation*}
\begin{split}
\nabla(\chi_Ru_{\lambda})&=\nabla(\chi_R u)_{\lambda}+\nabla[\chi_R,(-\Delta+\lambda)^{-1}]u\\
&=\nabla(\chi_R u)_{\lambda}+
(-\Delta+\lambda)[-\Delta+\lambda,\chi_R](-\Delta+\lambda)^{-1}u\\
&=\nabla(\chi_R u)_{\lambda}
+(-\Delta+\lambda)^{-1}[-\Delta,\chi_R]u_{\lambda}.
\end{split}
\end{equation*}
We write
\begin{equation*}
\begin{split}
s\|\chi_R u\|_{\dot{H}_x^s}^2&=
\int_0^{\infty}\lambda^sd\lambda\int \chi_R^2|\nabla  u_{\lambda}|^2dx+
O\left(\frac{1}{R^{2s\theta}}\right)\\
&\leq \|u\|_{\dot{H}_x^s}^2+O\left(\frac{1}{R^{2s\theta}}\right)
\end{split}
\end{equation*}
for some $\theta\in\left(\frac{1}{2s},1\right)$, thanks to Lemma \ref{commutator} and Lemma \ref{commutator2}.

 Finally we can choose $R=R(s,u_0,Q)$ large enough to complete the proof.
\end{proof}

\subsection{Morawetz Estimate}

\begin{proposition}[Morawetz estimate]\label{Morawetz}
Suppose $u$ is a global solution to \eqref{equ:equ1.1} with the condition \eqref{threshold}. Then there exists $\beta>0$,$C>0$, depending only on $s,\|u_0\|_{H_x^s},Q$, such that for any $t_1,t_2\in\mathbb{R}$,
\begin{equation}\label{Mora}
\frac{1}{|t_2-t_1|}\int_{t_1}^{t_2}\int_{|x|\leq \frac{R}{2}}|u(t,x)|^{p+1}dxdt\leq C\left(\frac{R}{|t_2-t_1|}+\frac{1}{R^{\beta}}\right).
\end{equation}
\end{proposition}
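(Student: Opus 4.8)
The strategy is to apply the virial identity of Lemma \ref{virial} with a suitably truncated weight $\varphi = \varphi_R$, integrate in time over $[t_1,t_2]$, and extract the localized potential term $\int_{|x|\leq R/2}|u|^{p+1}$ as the leading positive contribution, while controlling all error terms by the commutator estimates of Lemmas \ref{commutator} and \ref{commutator2} and the boundary term by Lemma \ref{Mbound}. Concretely, I would choose $\varphi_R(x) = R^2\,\psi(x/R)$ where $\psi$ is a radial function that equals $|x|^2/2$ on $|x|\leq 1/2$ and is constant (flat) for $|x|\geq 1$, with $|\nabla^k\psi|$ bounded. Then on the inner region $|x|\leq R/2$ one has $\partial_j\partial_k\varphi_R = \delta_{jk}$, $\Delta\varphi_R = d$, $\Delta^2\varphi_R = 0$, so the virial right-hand side produces exactly
$$
\frac{dM_{\varphi_R}}{dt}\Big|_{\text{inner}} = \int_0^\infty \lambda^s\,d\lambda \int_{|x|\leq R/2} 4|\nabla u_\lambda|^2\,dx - \frac{2d(p-1)}{p+1}\int_{|x|\leq R/2}|u|^{p+1}\,dx,
$$
and after reorganizing, using $s\|(-\Delta)^{s/2}u\|_{L^2}^2 = \int_0^\infty \lambda^s d\lambda \int |\nabla u_\lambda|^2$ from \eqref{hsnorm} and the coercivity of Lemma \ref{coercivity2} applied to $\chi_R u$, the good terms combine so that the inner potential integral is bounded by the full derivative $\tfrac{d}{dt}M_{\varphi_R}$ plus controllable errors.

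The key steps in order: (1) Fix the weight $\varphi_R$ and record the pointwise bounds on $\nabla\varphi_R, \Delta\varphi_R, \Delta^2\varphi_R$, noting all derivatives of order $\geq 2$ are supported in $R/2\leq |x|\leq R$ and obey $|\nabla^2\varphi_R|\lesssim 1$, $|\Delta^2\varphi_R|\lesssim R^{-2}$. (2) Plug into \eqref{virial1} and split the $x$-integrals into the inner region $|x|\leq R/2$ (where the Hessian is the identity and the biharmonic term vanishes) and the transition region $R/2\leq |x|\leq R$. (3) On the inner region, rewrite $\int_0^\infty\lambda^s\int_{|x|\leq R/2}4|\nabla u_\lambda|^2$ in terms of $\|\chi_R u\|_{\dot H^s}^2$ by expanding $\nabla(\chi_R u_\lambda)$ as in the proof of Lemma \ref{coercivity2}; the commutator errors are $O(R^{-2s\theta})$ by Lemmas \ref{commutator}--\ref{commutator2}. (4) Invoke Lemma \ref{coercivity2}: $s\|\chi_R u\|_{\dot H^s}^2 - \tfrac{d(p-1)}{2(p+1)}\|\chi_R u\|_{L^{p+1}}^{p+1}\geq \tilde\delta\|\chi_R u\|_{L^{p+1}}^{p+1}$, which turns the combination of kinetic and potential inner terms into a positive multiple of $\int_{|x|\leq R/2}|u|^{p+1}$ (up to the difference between $\|\chi_R u\|_{L^{p+1}}^{p+1}$ and $\int_{|x|\leq R/2}|u|^{p+1}$, which is nonnegative since $0\leq \chi_R\leq 1$ and $\chi_R\equiv 1$ there). (5) Bound the transition-region terms: the Hessian term is $\lesssim \int_0^\infty\lambda^s\int_{R/2\leq|x|\leq R}|\nabla u_\lambda|^2$, which needs care — one either absorbs it using that $\nabla^2\varphi_R$ can be chosen convex so this term has a favorable sign, or estimates it against $\|(-\Delta)^{s/2}u\|_{L^2}^2$ localized away from the origin and uses the radial Strauss decay (Lemma \ref{Strauss}) to gain a power $R^{-\beta}$; the biharmonic term is $O(R^{-2})\int|u_\lambda|^2$-type and handled exactly as in Lemma \ref{commutator}; the nonlinear transition term is bounded by $R^{-\beta}$ via Strauss. (6) Integrate in $t$ over $[t_1,t_2]$: the left side gives $\int_{t_1}^{t_2}\int_{|x|\leq R/2}|u|^{p+1}$, and $\int_{t_1}^{t_2}\tfrac{d}{dt}M_{\varphi_R}\,dt = M_{\varphi_R}(t_2)-M_{\varphi_R}(t_1)$ is bounded, by Lemma \ref{Mbound} with $\nabla\varphi_R\in W^{2,\infty}$ (noting $\|\nabla\varphi_R\|_{W^{2,\infty}}\lesssim R$ because of the $R^2\psi(\cdot/R)$ scaling), by $C R\,\|u_0\|_{H^{1/2}}^2\lesssim CR$ uniformly in $t$. (7) Divide by $|t_2-t_1|$ to obtain \eqref{Mora} with the two terms $R/|t_2-t_1|$ and $R^{-\beta}$.

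The main obstacle will be Step (5): controlling the Hessian (Virial kinetic) contribution on the transition annulus $R/2\leq |x|\leq R$ with a quantitative gain $R^{-\beta}$, since the naive bound only gives an $O(1)$ quantity, not something small. The resolution is to exploit that for radial functions the kinetic energy localized to $|x|\geq R/2$ decays — via Lemma \ref{Strauss} and interpolation one controls $\|u\|_{L^2(|x|\geq R/2)}$ and the high-frequency tail — combined with a careful choice of $\psi$ so that $\nabla^2\psi\geq 0$ (repulsive weight), making the transition Hessian term sign-definite and hence harmless, exactly as in the classical Lin–Strauss Morawetz argument adapted to the Balakrishnan representation. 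A secondary technical point is ensuring that all the $\lambda$-integrals converge at both endpoints, which is where the hypothesis $s\in(d/(d+1),1)$ (so $s>1/2$ in the relevant dimensions) and the commutator lemmas are used, precisely as in Lemmas \ref{commutator} and \ref{commutator2}.
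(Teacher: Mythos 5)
Your plan is essentially the paper's argument: truncated virial weight, Balakrishnan representation with $u_\lambda$, coercivity from Lemma~\ref{coercivity2} to extract the localized potential, commutator Lemmas~\ref{commutator}--\ref{commutator2} for the errors, and Lemma~\ref{Mbound} to bound the boundary term by $R$. However, there is a genuine internal inconsistency in your weight choice that would break the key positivity step. You take $\psi$ equal to $r^2/2$ for $r\le 1/2$ and \emph{constant} for $r\ge 1$, and then later you want to ``choose $\psi$ so that $\nabla^2\psi\ge 0$.'' These two requirements are incompatible: any radial $\psi$ that equals $r^2/2$ near the origin ($\psi'(1/2)=1/2>0$) and is constant for $r\ge 1$ ($\psi'(1)=0$) must have $\psi''<0$ somewhere in $(1/2,1)$, so the Hessian $\partial^2_{kl}\varphi_R=R\frac{\delta_{kl}|x|^2-x_kx_l}{|x|^3}\psi'(|x|/R)+\frac{x_kx_l}{|x|^2}\psi''(|x|/R)$ has a negative eigenvalue in the transition annulus. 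The paper avoids this by letting $\psi(r)=r$ for $r\ge 2$ (linearly growing at infinity), which allows $\psi''\ge 0$ throughout and hence a globally non-negative Hessian; the transition kinetic term is then simply dropped by sign, with no need for any quantitative gain in $R$.

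Your proposed fallback for the Hessian term -- ``estimate it against $\|(-\Delta)^{s/2}u\|_{L^2}^2$ localized away from the origin and use Strauss to gain $R^{-\beta}$'' -- does not go through easily either: Lemma~\ref{Strauss} gives pointwise decay of $u$ in $L^\infty$, but the Hessian term involves $\|\nabla u_\lambda\|_{L^2(R/2\le|x|\le R)}^2$ integrated against $\lambda^s\,d\lambda$, which is an $O(1)$ quantity with no natural $R$-gain, and the resolvent $(-\Delta+\lambda)^{-1}$ is nonlocal so localization of $u$ does not directly localize $u_\lambda$. So the sign argument (paper's route) is not optional here: the flat-at-infinity weight is a dead end. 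If you replace your $\psi$ by the paper's (quadratic inside, linear outside, convexly interpolated), the rest of your outline -- inner region identity Hessian, biharmonic $O(R^{-2})$, exterior nonlinear term $O(R^{-(d-s)(p-1)/2})$ by Strauss, commutator errors $O(R^{-2s\theta})$, and $\|M_{\varphi_R}\|_{L^\infty_t}\lesssim R$ -- is correct and matches the paper.
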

\begin{proof}
	For sufficiently large $R>0$, define the function
	$$ \varphi_R(x)=R^2\psi\left(\frac{|x|}{R}\right),
	$$
	where
	$$ \psi(r)=\frac{r^2}{2},0\leq r\leq 1,\psi(r)=r, r\geq 2.
	$$
Direct calculation gives
\begin{equation*}
\begin{split}
& \partial_{kl}^2\varphi_R(x)=R\frac{\delta_{kl}|x|^2-x_kx_l}{|x|^3}\psi'\left(\frac{|x|}{R}\right)+\frac{x_kx_l}{|x|^2}\psi''\left(\frac{|x|}{R}\right),\\
&\Delta\varphi_R(x)=\frac{(d-1)R}{|x|}\psi'\left(\frac{|x|}{R}\right)+\psi''\left(\frac{|x|}{R}\right),\\
\nabla\Delta\varphi_R(x)=&-\frac{(d-1)Rx}{|x|^3}\psi'\left(\frac{|x|}{R}\right)+\frac{(d-1)x}{|x|^2}\psi''\left(\frac{|x|}{R}\right)\\
&+\frac{x}{R|x|}\psi'''\left(\frac{|x|}{R}\right),\\
\Delta^2\varphi_R(x)=&\frac{(d-1)(d-3)}{|x|^2}\psi''\left(\frac{|x|}{R}\right)-\frac{R(d-1)(d-3)}{|x|^3}\psi'\left(\frac{|x|}{R}\right)\\
&+\frac{2(d-1)}{R|x|}\psi'''\left(\frac{|x|}{R}\right)+\frac{1}{R^2}\psi^{(4)}\left(\frac{|x|}{R}\right).
\end{split}
\end{equation*}
Note that Hess$\varphi_R$ is non-negative definite.

Pick $\chi_R(x)=\chi(R^{-1}x)$ with a radial function $\chi\in C_c^{\infty}(\mathbb{R}^d)$, $\chi(x)=1,|x|\leq \frac{1}{2}$ and $0\leq \chi \leq 1$.

 We have
\begin{equation}\label{positive}
\begin{split}
&\int_0^{\infty}\lambda^sd\lambda
\int_{\mathbb{R}^d}4\partial_{kl}^2\varphi_R\partial_k\ov{u_{\lambda}}\partial_lu_{\lambda} dx\\
\geq &4\int_0^{\infty}\lambda^sd\lambda
\int_{\mathbb{R}^d}\chi_R^2|\nabla u_{\lambda}|^2dx\\
=&4\int_0^{\infty}\lambda^sd\lambda
\int_{\mathbb{R}^d}|\nabla(\chi_{R}u_{\lambda})|^2dx+
4\int_0^{\infty}\lambda^sd\lambda
\int_{\mathbb{R}^d}\chi_R\Delta(\chi_R)|u_{\lambda}|^2dx,
\end{split}
\end{equation}
where in the last step, we have used the same integration by parts in the proof of Lemma \ref{coercivity2}.

Writing
\begin{equation*}
\begin{split}
\nabla(\chi_Ru_{\lambda})
&=\nabla(\chi_R u)_{\lambda}
+(-\Delta+\lambda)^{-1}[-\Delta,\chi_R]u_{\lambda},
\end{split}
\end{equation*}
It follows from Lemma \ref{coercivity2} that
\begin{equation}\label{virial2}
\begin{split}
\frac{dM_{\varphi_R}}{dt}&\geq \tilde{\delta}\|\chi_R u\|_{L_x^{p+1}}^{p+1} -C\int_{|x|\geq \frac{R}{2}}|u|^{p+1}dx\\&+4\int_{0}^{\infty}\lambda^sd\lambda
\int_{\mathbb{R}^d}\chi_R\Delta\chi_R |u_{\lambda}|^2dx-\int_{0}^{\infty}\lambda^sd\lambda
\int_{\mathbb{R}^d} \Delta^2\varphi_R|u_{\lambda}|^2dx \\&+2\Re\int_0^{\infty}\lambda^sd\lambda\int_{\mathbb{R}^d}\left(\nabla
(-\Delta+\lambda)^{-1}[-\Delta,\chi_R]u_{\lambda}\right)\cdot \nabla (\chi_R\ov{u})_{\lambda}dx
\end{split}
\end{equation}

There are several remaining terms to be estimated:

Firstly, useing Strauss inequality Lemma \ref{Strauss} yields that
\begin{equation*}
\int_{|x|\geq \frac{R}{2}}|u|^{p+1}dx\lesssim
\|u\|_{L_t^{\infty}L^{\infty}(|x|\geq \frac{R}{2})}^{p-1}\|u\|_{L_t^{\infty}L_x^2}^2
\lesssim R^{-\frac{(d-s)(p-1)}{2}}.
\end{equation*}
Next,
\begin{equation*}
\begin{split}
\int_0^{\infty}\lambda^sd\lambda\int \Delta^2\varphi_R |u_{\lambda}|^2dx&=
\int_0^{\infty}\lambda^sd\lambda\int \Delta\varphi_R 2\left((\Re \ov{u_{\lambda}}{\Delta u_{\lambda}})+\nabla|u_{\lambda}|^2\right)dx\\
&\lesssim R^{-2s}.
\end{split}
\end{equation*}
Here, to arrive at the last step, one just plays the same game as in the proofs of Lemma \ref{commutator}, Lemma \ref{commutator2}, where the estimations for the other terms are already contained therein.

Finally, since
$$ \|M_{\varphi_R}\|_{L_t^{\infty}}\lesssim R,
$$
thanks to Lemma \ref{Mbound},
we have for any  $t_1,t_2\in\mathbb{R}$,
$$ \int_{t_1}^{t_2}\|\chi_R u(t)\|_{L_x^{p+1}}^{p+1}dt\lesssim \frac{1}{R^{\beta}}|t_2-t_1|+R,
$$
for some $\beta>0$. This completes the proof.
\end{proof}
The following corollary is a simple consequence of the Morawetz estimate \eqref{Mora} and H\"{o}lder inequality.

\begin{corollary}
	There exist a sequence of time $\{t_n\}_{n\in\mathbb{N}}$ and a sequence of radii $\{R_n\}_{n\in\mathbb{N}}$, with $t_n\rightarrow\infty,R_n\rightarrow\infty$, such that
	$$ \lim_{t_n\rightarrow\infty}\int_{|x|\leq R_n}|u(t_n,x)|^2dx=0.
	$$
\end{corollary}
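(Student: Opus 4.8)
The plan is to derive the corollary directly from the Morawetz estimate \eqref{Mora} in Proposition \ref{Morawetz} by exploiting the freedom in the two parameters: the radius $R$ and the time length $|t_2-t_1|$. First I would fix a sequence $R_n\to\infty$ and, for each $n$, apply \eqref{Mora} on an interval $[0,T_n]$ with $T_n$ chosen much larger than $R_n$, so that the right-hand side $C\big(R_n/T_n + R_n^{-\beta}\big)$ is small. Concretely, taking $T_n = R_n^{1+\beta}$ (or any choice with $T_n/R_n\to\infty$ and $T_n\to\infty$) forces
$$
\frac{1}{T_n}\int_0^{T_n}\int_{|x|\le R_n/2}|u(t,x)|^{p+1}\,dx\,dt \;\le\; C\Big(R_n^{-\beta}+R_n^{-\beta}\Big)\;\xrightarrow[n\to\infty]{}\;0.
$$
By the mean value theorem (or simply choosing a near-minimizing time), for each $n$ there exists $t_n\in[0,T_n]$ with
$$
\int_{|x|\le R_n/2}|u(t_n,x)|^{p+1}\,dx \;\le\; C\,R_n^{-\beta}.
$$
Moreover, since $T_n\to\infty$, we may also arrange $t_n\to\infty$: indeed, repeating the argument on $[T_n', T_n'+T_n]$ for an arbitrary $T_n'\to\infty$ in place of $[0,T_n]$ yields $t_n\ge T_n'\to\infty$ with the same bound, because the estimate \eqref{Mora} holds for \emph{all} $t_1,t_2\in\mathbb{R}$.

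Next I would convert the $L^{p+1}$ smallness on the ball $\{|x|\le R_n/2\}$ into $L^2$ smallness on a (possibly smaller) ball, which is where H\"older's inequality enters. Since $p>1$, on any fixed ball $B_r$ we have $\|f\|_{L^2(B_r)} \lesssim |B_r|^{\frac{p-1}{2(p+1)}}\|f\|_{L^{p+1}(B_r)}$, so
$$
\int_{|x|\le R_n/2}|u(t_n,x)|^2\,dx \;\lesssim\; R_n^{d\cdot\frac{p-1}{p+1}}\Big(\int_{|x|\le R_n/2}|u(t_n,x)|^{p+1}\,dx\Big)^{\frac{2}{p+1}} \;\lesssim\; R_n^{\,d\frac{p-1}{p+1}}\,R_n^{-\frac{2\beta}{p+1}}.
$$
If the exponent $d\frac{p-1}{p+1} - \frac{2\beta}{p+1}$ happens to be negative, we are done with these very $R_n$. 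In general it need not be, so the clean fix is to run the argument on a ball of radius $R_n' := R_n^{\kappa}$ for a small $\kappa\in(0,1)$: applying \eqref{Mora} with radius $2R_n'$ still gives the bound $C\big((R_n')^{-\beta} + R_n'/T_n\big)$, and after choosing $T_n$ large this produces $t_n\to\infty$ with $\int_{|x|\le R_n'}|u(t_n)|^2 \lesssim (R_n')^{d\frac{p-1}{p+1}-\frac{2\beta}{p+1}} \cdot (\text{const})$; since we are free to pick $\kappa$ as small as we like and then relabel, and since the genuinely relevant point is only that \emph{some} radii tending to infinity work, one obtains sequences $t_n\to\infty$, $\tilde R_n\to\infty$ with $\int_{|x|\le \tilde R_n}|u(t_n,x)|^2\,dx\to 0$. (Alternatively, one keeps $R_n\to\infty$ slowly enough — e.g. $R_n = (\log T_n)^{\eta}$ along $T_n\to\infty$ — that the polynomial factor $R_n^{d\frac{p-1}{p+1}}$ is beaten by any negative power coming from the Morawetz bound on $[0,T_n]$.)

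The main obstacle — and it is a minor bookkeeping obstacle rather than a conceptual one — is the interplay of the three competing scales: we need $R$ large (to kill $R^{-\beta}$), $|t_2-t_1|$ much larger than $R$ (to kill $R/|t_2-t_1|$), and yet the polynomial loss $R^{d(p-1)/(p+1)}$ from H\"older on the ball must not overwhelm the gain. All of these are simultaneously satisfiable because the time length is a completely free parameter: once $R$ is fixed one sends $|t_2-t_1|\to\infty$ to make the first term negligibly small, and the H\"older loss is a fixed power of $R$ against which one can diagonalize by letting $R\to\infty$ slowly while the time windows march off to infinity. Assembling these choices into a single pair of sequences $\{t_n\},\{R_n\}$ with $t_n,R_n\to\infty$ and extracting $t_n$ in ever-later time windows to ensure $t_n\to\infty$ completes the proof.
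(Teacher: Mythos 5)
Your overall scheme (average the Morawetz bound over a long, late time window with length much larger than the Morawetz radius, select a good time by the mean value theorem, then pass from $L^{p+1}$ smallness on a ball to $L^2$ smallness via H\"older) is the intended one, and the remark that the estimate \eqref{Mora} holds on arbitrary intervals so that the selected times can be pushed to infinity is correct. However, both ``fixes'' you offer for the case when $d\tfrac{p-1}{p+1}-\tfrac{2\beta}{p+1}>0$ are vacuous as written. In the first one you replace $R_n$ by $R_n'=R_n^{\kappa}$ but then apply \eqref{Mora} \emph{again with the new radius $2R_n'$}, so the resulting bound is $(R_n')^{d\frac{p-1}{p+1}-\frac{2\beta}{p+1}}$ and the exponent is exactly the same fixed number independent of $\kappa$; if it is positive you get something that still tends to infinity, just more slowly. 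In the second one, taking $R_n=(\log T_n)^{\eta}$ suffers from the identical problem: once $T_n$ is so large that the $R_n/T_n$ term in \eqref{Mora} is negligible, the Morawetz bound is simply $R_n^{-\beta}$, which carries no power of $T_n$ at all, so the H\"older loss against the Morawetz gain is again governed by the very same $\kappa$-independent exponent. Nothing in either fix changes its sign.

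The step you are missing is to \emph{decouple} the radius used in the Morawetz estimate from the radius used in H\"older. Apply \eqref{Mora} on $[t_1,t_2]=[T_n,2T_n]$ with a large Morawetz radius $R_n$ and $T_n\gg R_n$, producing $t_n\in[T_n,2T_n]$ with $\int_{|x|\le R_n/2}|u(t_n)|^{p+1}\,dx\lesssim R_n^{-\beta}$. Then pass to $L^2$ on a \emph{strictly smaller} ball of radius $\rho_n:=R_n^{\gamma}$ with $0<\gamma<\min\bigl(1,\tfrac{2\beta}{d(p-1)}\bigr)$, using that the $L^{p+1}$ norm on the smaller ball is still controlled by the one on the larger ball:
\begin{equation*}
\int_{|x|\le\rho_n}|u(t_n)|^2\,dx\;\lesssim\;\rho_n^{\,d\frac{p-1}{p+1}}\Bigl(\int_{|x|\le R_n/2}|u(t_n)|^{p+1}\,dx\Bigr)^{\frac{2}{p+1}}\;\lesssim\;R_n^{\,\gamma d\frac{p-1}{p+1}-\frac{2\beta}{p+1}}\;\longrightarrow 0,
\end{equation*}
and now the exponent $\gamma d\tfrac{p-1}{p+1}-\tfrac{2\beta}{p+1}$ is genuinely negative for $\gamma$ small, while $\rho_n\to\infty$ since $\gamma>0$ and $R_n\to\infty$. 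This is the elementary two-radius bookkeeping the paper's one-line reference to ``Morawetz plus H\"older'' is invoking; your write-up lost it by feeding the smaller radius back into \eqref{Mora}.
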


Combining this corollary and the scattering criterion (Lemma \ref{scacri}), the proof of Theorem \ref{scattering} is complete.

\section{Appendix}

\subsection{Virial Identity}
We briefly review the proof of Lemma \ref{virial} in \cite{Lenzmann}.

We first briefly explain the formula \eqref{Bformula}:

In the Fourier side, we compute
\begin{equation*}
\begin{split}
|\xi|^2\int_0^{\infty}\frac{\lambda^{s-1}}{\lambda+|\xi|^2}d\lambda&=
|\xi|^{2s}\int_0^{\infty}\frac{y^{s-1}}{1+y}dy\\
&=|\xi|^{2s}\int_1^{\infty}\frac{(y-1)^{s-1}}{y}dy\\
&=|\xi|^{2s}\int_0^1(1-t)^{s-1}t^{-s}dt\\
&=|\xi|^{2s}\Gamma(s)\Gamma(1-s)\\
&=|\xi|^2\frac{\pi}{\sin(s\pi)}
\end{split}
\end{equation*}

Now we derive the virial identity in a formal way. The rigorous derivation contains several steps of approximation accompanied with certain estimations.

Using the equation \eqref{equ:equ1.1} satisfied by $u$, we have
\begin{equation*}
\begin{split}
\frac{dM_{\varphi}}{dt}=&2\Im\int (i(-\Delta)^s\ov{u})\nabla u\cdot\nabla \varphi dx+
2\Im \int \ov{u}\nabla (-i(-\Delta)^s u)\cdot\nabla \varphi\\
&-\frac{2(p-1)}{p+1}\int_{\mathbb{R}^d}\Delta\varphi |u|^{p+1}dx.
\end{split}
\end{equation*}
Set $c_s=\frac{\sin(\pi s)}{\pi}, u_{\lambda}=\sqrt{c_s}(-\Delta+\lambda)^{-1}u,$ and then
\begin{equation*}
\begin{split}
&2\Im\int (i(-\Delta)^s\ov{u})\nabla u\cdot\nabla \varphi dx+
2\Im \int \ov{u}\nabla (-i(-\Delta)^s u)\cdot\nabla \varphi\\
=&2\sqrt{c_s}\Re\int_0^{\infty}\lambda^{s-1}d\lambda\int
\Delta\ov{u_{\lambda}}\nabla u\cdot\nabla \varphi dx+
2\sqrt{c_s}\Re\int_0^{\infty}\lambda^{s-1}d\lambda\int \ov{u}\nabla\varphi\cdot\nabla\Delta u_{\lambda}dx\\
=&2\sqrt{c_s}\Re\int_{0}^{\infty}\lambda^{s-1}d\lambda\int \nabla\ov{u_{\lambda}}\cdot\nabla(\nabla u\cdot\nabla \varphi)dx-2\sqrt{c_s}\Re\int_{0}^{\infty}\lambda^{s-1}d\lambda\int \Delta u_{\lambda}\nabla\ov{u}\cdot\nabla\varphi dx\\
&-2\sqrt{c_s}\Re\int_{0}^{\infty}\lambda^{s-1}d\lambda
\int \Delta u_{\lambda}\ov{u}\Delta\varphi dx.
\end{split}
\end{equation*}
Here
\begin{equation}\label{1}
\begin{split}
&\Re\int \Delta  u_{\lambda}\ov{u}\Delta\varphi dx\\
=&\frac{1}{\sqrt{c_s}}\Re\int \Delta u_{\lambda}(-\Delta+\lambda)\ov{u_{\lambda}}\Delta\varphi dx\\
=&-\frac{1}{\sqrt{c_s}}\Re\int |\Delta u_{\lambda}|^2\Delta\varphi dx +\frac{\lambda}{\sqrt{c_s}}\Re\int \Delta u_{\lambda}\ov{u_{\lambda}}\Delta\varphi dx\\
=&-\frac{1}{\sqrt{c_s}}\Re\int |\Delta u_{\lambda}|^2\Delta\varphi dx
+\frac{\lambda}{2\sqrt{c_s}}\int (\Delta|u_{\lambda}|^2-2|\nabla u_{\lambda}|^2)\Delta\varphi dx\\
=&-\frac{1}{\sqrt{c_s}}\Re\int |\Delta u_{\lambda}|^2\Delta\varphi dx
+\frac{\lambda}{2\sqrt{c_s}}\int |u_{\lambda}|^2\Delta^2\varphi dx-\frac{\lambda}{\sqrt{c_s}}\int |\nabla u_{\lambda}|^2\Delta\varphi dx,
\end{split}
\end{equation}
and
\begin{equation}\label{2}
\begin{split}
&\Re\int \Delta u_{\lambda}\nabla\ov{u}\cdot\nabla\varphi dx=
\frac{1}{\sqrt{c_s}}\int \Delta u_{\lambda}\nabla((-\Delta+\lambda)\ov{u_{\lambda}})\cdot{\nabla \varphi}dx\\
=&-\frac{\lambda}{\sqrt{c_s}}\int \nabla u_{\lambda}\cdot\nabla(\nabla\ov{u_{\lambda}}\cdot\nabla\varphi)dx+\frac{1}{2\sqrt{c_s}}\int |\Delta u_{\lambda}|^2\Delta\varphi dx.
\end{split}
\end{equation}
Similarly,
\begin{equation}\label{3}
\begin{split}
&\Re\int \nabla\ov{u_{\lambda}}\cdot\nabla (\nabla u\cdot\nabla\varphi)dx\\
=&\frac{1}{\sqrt{c_s}}\Re\int
\nabla\ov{u_{\lambda}}\cdot\nabla(\nabla(-\Delta+\lambda)u_{\lambda}\cdot\nabla \varphi)dx\\
=&-\frac{1}{2\sqrt{c_s}}\Re\int |\Delta u_{\lambda}|^2\Delta\varphi dx-\frac{\lambda}{\sqrt{c_s}}\Re\int \Delta \ov{u_{\lambda}}\nabla u_{\lambda}\cdot\nabla\varphi dx.
\end{split}
\end{equation}
Combining \eqref{1}, \eqref{2} and \eqref{3} gives
\begin{equation}\label{4}
\begin{split}
&2\Im\int (i(-\Delta)^s\ov{u})\nabla u\cdot\nabla \varphi dx+
2\Im \int \ov{u}\nabla (-i(-\Delta)^s u)\cdot\nabla \varphi\\
=&4\Re\int_0^{\infty}\lambda^{s}d\lambda\int \nabla \ov{u_{\lambda}}\cdot\nabla(\nabla u_{\lambda}\cdot\nabla\varphi)dx+
2\int_0^{\infty}\lambda^{s}d\lambda\int |\nabla u_{\lambda}|^2\Delta\varphi dx\\
&-\int_0^{\infty}\lambda^{s}d\lambda\int |u_{\lambda}|^2\Delta^2\varphi dx.
\end{split}
\end{equation}
The rest computation are exactly the same as for NLS. Indeed, put $v=u_{\lambda}$, and then
\begin{equation}\label{5}
\begin{split}
\Re\int \nabla \ov{v}\cdot(\nabla v\cdot\nabla \varphi)dx&=
\Re\int \partial_k\ov{v}(\partial^2_{jk}\varphi\partial_jv+\partial^2_{jk}v\partial_j\varphi )dx\\
&=\int \partial^2_{jk}\varphi \partial_j v\partial_k\ov{v}dx-\Re\int \partial_j(\partial_k\ov{v}\varphi_j) \partial_k vdx\\
&=\int \partial^2_{jk}\varphi \partial_j v\partial_k\ov{v}dx
-\Re\int\partial_kv \partial^{jk}\ov{v}\partial_j \varphi dx\\
&~~~-\int |\nabla v|^2\Delta\varphi dx\\
&=\int \partial^2_{jk}\varphi \partial_j v\partial_k\ov{v}dx
-\frac{1}{2}\int |\nabla v|^2\Delta\varphi dx.
\end{split}
\end{equation}
Plugging \eqref{5} into \eqref{4} yields \eqref{virial1}.

\subsection{Defocusing case}
We can also apply the virial identity to the defocusing FNLS:
\begin{align} \label{defocusing}
\begin{cases}    i\partial_tu-(-\Delta)^su= +|u|^{p-1} u,\quad
(t,x)\in\R\times\R^d,
\\
u(0,x)=u_0(x).
\end{cases}
\end{align}
With the help of the strategy in \cite{Lenzmann}, we can easily establish the scattering result for defocusing FNLS.
\begin{proposition}\label{defo}
Assume that $d\geq 3$, $s\in\left(\frac{d}{2d-1},1\right)$ and $s_c=\frac{d}{2}-\frac{2s}{p-1}\in (0,s)$.  Suppose $u_0\in H^s(\mathbb{R}^d)$ is radial. Then the solution to \eqref{defocusing} with initial data $u_0$ is global and scatters in $H^s(\mathbb{R}^d)$.
\end{proposition}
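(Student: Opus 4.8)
The plan is to run, in the simpler defocusing setting, the same three-step scheme used for Theorem \ref{scattering}: global well-posedness together with a uniform $H^s$ bound, a Morawetz/virial estimate preventing concentration of mass near the origin, and the radial scattering criterion of Lemma \ref{scacri}. The decisive simplification is that for \eqref{defocusing} the energy $E[u]=\tfrac12\|(-\Delta)^{s/2}u\|_{L^2}^2+\tfrac1{p+1}\|u\|_{L^{p+1}}^{p+1}$ is nonnegative, so conservation of mass and energy gives $\|u(t)\|_{H^s}^2\le M[u_0]+2E[u_0]=:E^2$ on the whole maximal interval; combined with the local Cauchy theory in $H^s_{\mathrm{rad}}$ (insensitive to the sign of the nonlinearity, valid for $s>\tfrac{d}{2d-1}$ by Lemma \ref{Strichartz}) this produces a global solution with $\|u\|_{L^\infty_tH^s_x}\le E$. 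In particular there is no ground-state threshold to verify, which is the only structural difference from Theorem \ref{scattering}.

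First I would note that Lemma \ref{localintimelemma} and the scattering criterion Lemma \ref{scacri} hold verbatim for \eqref{defocusing}: their proofs use only the uniform $H^s$ bound, the Strichartz/dispersive estimates of Section 2 and Duhamel's formula, none of which sees the sign of $|u|^{p-1}u$. Hence it suffices to verify the non-concentration condition \eqref{equ:masslim}; forward scattering then follows from Lemma \ref{scacri}, and backward scattering from the time-reversal symmetry $u(t,x)\mapsto\overline{u(-t,x)}$, which maps \eqref{defocusing} to itself.

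Next I would carry out the virial computation of Section 4 with the truncated radial weight $\varphi_R(x)=R^2\psi(|x|/R)$, $\psi(r)=r^2/2$ for $r\le1$, extended so that $\mathrm{Hess}\,\varphi_R\ge0$ and $\Delta\varphi_R\ge0$; in particular $\Delta\varphi_R\equiv d$ on $\{|x|\le R\}$. For \eqref{defocusing} the last term of \eqref{virial1} changes sign, so
$$\frac{dM_{\varphi_R}}{dt}=\int_0^\infty\lambda^s d\lambda\int\big(4\,\partial_j\partial_k\varphi_R\,\partial_j\overline{u_\lambda}\,\partial_k u_\lambda-\Delta^2\varphi_R|u_\lambda|^2\big)dx+\frac{2(p-1)}{p+1}\int\Delta\varphi_R|u|^{p+1}dx,$$
and now both the Hessian term and the nonlinear term are nonnegative, so no coercivity or variational input is needed at all. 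The term $\int_0^\infty\lambda^sd\lambda\int\Delta^2\varphi_R|u_\lambda|^2dx$ is $O(R^{-2s})$ uniformly in $t$ by the Balakrishnan-representation estimates of Lemmas \ref{commutator}--\ref{commutator2} and Proposition \ref{Morawetz} ($\Delta^2\varphi_R$ is supported where $|x|\sim R$ and is $O(R^{-2})$ there), and since $\tfrac{d}{2d-1}>\tfrac12$ we have $s>\tfrac12$, so Lemma \ref{Mbound} gives $\|M_{\varphi_R}\|_{L^\infty_t}\lesssim R\|u\|_{L^\infty_tH^{1/2}_x}^2\lesssim R$. Integrating over $[T_1,T_2]$ and keeping only the region $\{|x|\le R\}$ in the nonlinear term yields a bound of exactly the shape \eqref{Mora},
$$\int_{T_1}^{T_2}\int_{|x|\le R}|u(t,x)|^{p+1}\,dx\,dt\lesssim R+R^{-\beta}|T_2-T_1|$$
for some $\beta>0$, uniformly in $R\ge1$ and $T_1<T_2$.

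From this Morawetz bound I would conclude exactly as in the Corollary following Proposition \ref{Morawetz}: coupling the radius to the time window (take $T_1=\sqrt T$, $T_2=T$ and $R=R(T)\to\infty$ slowly) produces sequences $t_n\to\infty$, $R_n\to\infty$ with $\int_{|x|\le R_n}|u(t_n)|^{p+1}dx\to0$, and then Hölder on the ball gives $\int_{|x|\le R_n}|u(t_n)|^2dx\to0$; for the fixed radius $R_E$ of Lemma \ref{scacri} and $n$ large, $\int_{|x|\le R_E}|u(t_n)|^2dx\le\int_{|x|\le R_n}|u(t_n)|^2dx\to0$, so \eqref{equ:masslim} holds and Lemma \ref{scacri} closes the argument. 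The genuine work---as in \cite{Lenzmann} and Section 4---will be the rigorous justification of \eqref{virial1} for the truncated weight at regularity only $H^s$ and the uniform-in-time control of the $\Delta^2\varphi_R$ error term; the remaining point, checking that the admissible range of $p$ needed for Lemma \ref{scacri} together with the Hölder step in the final extraction is compatible with $s_c\in(0,s)$, has to be handled with the same care as in the mass-supercritical focusing case but is otherwise routine.
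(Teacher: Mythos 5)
Your overall scheme (energy conservation gives a uniform $H^s$ bound and global existence; a virial/Morawetz estimate to rule out mass concentration; the radial scattering criterion to conclude) is sound, and the observation that in the defocusing case the Hessian term and the nonlinear term in the virial identity are \emph{both} nonnegative so that no variational input is needed is exactly right. However, your route is not the one the paper takes, and the difference matters for the range of validity.

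The paper does \emph{not} re-run the truncated-weight Morawetz-plus-Lemma \ref{scacri} machinery. Instead it takes the untruncated weight $\varphi_\epsilon(x)=\sqrt{|x|^2+\epsilon^2}$ (and lets $\epsilon\to 0$), for which $\mathrm{Hess}\,\varphi\ge 0$, $\Delta\varphi\ge 0$, and $\|M_{\varphi_\epsilon}\|_{L^\infty_t}\lesssim 1$ uniformly in $\epsilon$ by Lemma \ref{Mbound}. Since both bulk terms in \eqref{virial1} are now favourable, integrating in time directly produces the \emph{global} bound $\int_\R\int_{\R^d}\frac{|u|^{p+1}}{|x|}\,dx\,dt\lesssim 1$. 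The radial Strauss inequality converts this to a global space-time Lebesgue bound $\|u\|_{L^q_{t,x}}\lesssim 1$ with $\frac1q=\frac{d-2s}{(d-2s)(p+1)+2}$, and the paper checks that $(q,q)\in\Lambda_d(\gamma)$ for some $\gamma\in(0,s)$ precisely because $0<s_c<s$; scattering follows from this bound by interpolation and Strichartz. Lemma \ref{scacri} is never actually invoked.

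This distinction is not cosmetic: it is a genuine gap in your proposal. Lemma \ref{scacri} is proved under the hypotheses of Theorem \ref{scattering} -- the argument estimating $F_2$ uses $s\ge\frac{d}{d+1}$ and the extra lower bound on $p$ ($p>\frac{8s}{3}$ for $d=3$, $p\ge 2$ for $d\ge4$) -- whereas Proposition \ref{defo} is stated for the strictly larger range $s\in(\frac{d}{2d-1},1)$ and all $p$ with $s_c\in(0,s)$ (for $d=3$, $\frac{d}{2d-1}=\frac35<\frac34=\frac{d}{d+1}$, and $1+\frac{4s}{3}<\frac{8s}{3}$ when $s>\frac34$). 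You flag this range question at the end and call it ``routine,'' but it is not: your route via Lemma \ref{scacri} only yields Proposition \ref{defo} under the Theorem \ref{scattering} hypotheses, and thus proves a strictly weaker statement than claimed. The paper's untruncated-Morawetz-plus-Strauss route avoids the scattering criterion entirely and is tailored to reach the full defocusing range; if you want to retain your approach, you must either re-prove Lemma \ref{scacri} in the wider regime or explicitly weaken the conclusion to match its hypotheses.
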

\begin{proof}
	For defocusing FNLS, the global well-posedness is ensured by energy conservation since $s_c<s$.
	
	It suffices to prove the scattering. Thanks to Lemma \ref{scacri}, we need exclude the concentration of mass.
	
	The virial identity in the defocusing case becomes
		\begin{equation}\label{vir}
	\begin{split}
	\frac{dM_{\varphi}}{dt}=&\int_{0}^{\infty}\lambda^{s}d\lambda\int_{\mathbb{R}^3} \left(4\partial_j\partial_k \varphi\partial_j\ov{u_{\lambda}}\partial_ku_{\lambda}-\Delta^2\varphi |u_{\lambda}|^2\right)dx\\
	&+\int_{\mathbb{R}^3}\Delta\varphi |u|^{p+1}dx.
	\end{split}
	\end{equation}
	
	In this case, we can simply take the test function $\varphi=\varphi_{\epsilon}(x)=\sqrt{|x|^2+\epsilon^2}$ and let $\epsilon\rightarrow 0$ followed by applying \eqref{vir} for $\varphi_{\epsilon}$ to obtain
	\begin{equation*}
	\begin{split}
	\int_{\mathbb{R}}\int_{\mathbb{R}^3}\frac{|u(t,x)|^{p+1}}{|x|}dxdt\lesssim 1,
	\end{split}
	\end{equation*}
	since $\|M_{\varphi_{\epsilon}}\|_{L_t^{\infty}}\lesssim 1$, uniformly in $\epsilon.$

From radial Sobolev embedding Lemma \ref{Strauss}
$$ |x|^{\frac{d-2s}{2}}|u(x)|\lesssim \|u\|_{\dot{H}_x^s},
$$
we have
\begin{equation*}
\begin{split}
\int_{\mathbb{R}}\int_{\mathbb{R}^3}|u(t,x)|^{p+1+\frac{2}{d-2s}}dtdx\leq
\int_{\mathbb{R}}\int_{\mathbb{R}^3}\frac{|u(t,x)|^{p+1}}{|x|}|x||u(t,x)|^{\frac{2}{d-2s}}dtdx\lesssim 1.
\end{split}
\end{equation*}
Letting $\frac{1}{q}=\frac{1}{r}=\left(\frac{d-2s}{(d-2s)(p+1)+2}\right)$, one has
$$ \|u\|_{L_{t,x}^{q}}\lesssim 1.
$$
The scattering will follow from the standard argument of interpolation and Strichartz inequality once we have showed that $(q,q)\in \Lambda_d(\gamma)$ for some $\gamma\in(0,s)$.

Indeed,  $(q,q)\in\Lambda_d(\gamma)$ for some $0<\gamma<s$ is equivalent to
$$ \frac{d-2s}{2(d+2s)}<\frac{1}{q}<\frac{d}{d+2s}.
$$
Now $s_c<s$ implies that $p<1+\frac{4s}{d-2s}$ and one can easily verify that
$$ \frac{1}{q}=\frac{d-2s}{(d-2s)(p+1)+2}\geq \frac{d-2s}{2(d+2s)}.
$$
The other side $s_c>0$ implies that
$$ p>1+\frac{4s}{d}\geq 1+\frac{4s}{d}-\frac{2}{d-2s}.
$$
From this, we can deduce that
$$ \frac{1}{q}=\frac{d-2s}{(d-2s)(p+1)+2}\leq \frac{d}{2(d+2s)}.
$$
\end{proof}
\subsection*{Acknowledgement} The authors C. Sun and J. Zheng are financed by ERC project SCAPDE. The author H. Wang is financed by China National Science
Foundation under the grant number 11101172, 11371158 and 11571131. The author X. Yao is financed by China National Science Foundation under the grant number 11371158.

\begin{center}

\end{center}

\end{document}